\title{The Pareto cover problem} \date{}
\author{Bento Natura}
\affil{London School of Economics and Political Science}
\author{Meike Neuwohner}
\affil{Forschungsinstitut f\"ur Diskrete Mathematik, Universit\"at Bonn}
\author{Stefan Weltge}
\affil{Technische Universit\"at M\"unchen}
\declaretheorem{theorem}
\declaretheorem[sibling=theorem]{lemma}
\declaretheorem[sibling=theorem]{corollary}
\declaretheorem[sibling=theorem]{proposition}
\declaretheorem[style=remark,sibling=theorem]{claim}
\def\renewtheorem#1{\expandafter\let\csname#1\endcsname\relax
    \expandafter\let\csname c@#1\endcsname\relax
    \gdef\renewtheorem@envname{#1}
    \renewtheorem@secpar
}
\def\renewtheorem@secpar{\@ifnextchar[{\renewtheorem@numberedlike}{\renewtheorem@nonumberedlike}}
\def\renewtheorem@numberedlike[#1]#2{\newtheorem{\renewtheorem@envname}[#1]{#2}}
\def\renewtheorem@nonumberedlike#1{  
    \def\renewtheorem@caption{#1}
    \edef\renewtheorem@nowithin{\noexpand\newtheorem{\renewtheorem@envname}{\renewtheorem@caption}}
    \renewtheorem@thirdpar
}
\def\renewtheorem@thirdpar{\@ifnextchar[{\renewtheorem@within}{\renewtheorem@nowithin}}
\def\renewtheorem@within[#1]{\renewtheorem@nowithin[#1]}
\theoremstyle{definition}
\newtheorem*{RestateTheoFPTAS}{Theorem~\ref{TheoFPTASGeneral}}
\newcommand{\defn}{\coloneqq}
\newcommand{\ev}{\operatorname{E}}
\newcommand{\sz}{\operatorname{size}}
\newcommand{\Q}{\mathbb{Q}}
\newcommand{\R}{\mathbb{R}}
\newcommand{\Z}{\mathbb{Z}}
\renewcommand{\le}{\leqslant}
\renewcommand{\leq}{\leqslant}
\renewcommand{\ge}{\geqslant}
\renewcommand{\geq}{\geqslant}
\renewcommand{\t}{^{\intercal}}
\newcommand{\onevec}{\mathbf{1}}
\newcommand{\zerovec}{\mathbf{0}}
\newcommand{\eps}{\varepsilon}
\newcommand{\eul}{\mathrm{e}}
\newcommand{\mwp}[1]{\texttt{#1-WAY NUMBER PARTITIONING}}
\newcommand{\parti}{\texttt{NUMBER PARTITIONING}\space}
\begin{document}
\maketitle
\begin{abstract}
	We introduce the problem of finding a set $B$ of $k$ points in $[0,1]^n$ such that the expected cost of the cheapest point in $B$ that dominates a random point from $[0,1]^n$ is minimized.
	We study the case where the coordinates of the random points are independently distributed and the cost function is linear.
	This problem arises naturally in various application areas where customer's requests are satisfied based on predefined products, each corresponding to a subset of features.
	We show that the problem is NP-hard already for $k=2$ when each coordinate is drawn from $\{0,1\}$, and obtain an FPTAS for general fixed $k$ under mild assumptions on the distributions.
\end{abstract}

\section{Introduction}
Let $f \colon [0,1]^n \to \R$ be a cost function and $B \subseteq [0,1]^n$ be a finite set.
We consider the function
\[
	f_B \colon [0,1]^n \to \R \cup \{\infty\}, \quad f_B(x) \defn \min \{f(b) : x \le b, \, b \in B \},
\]
where we write $x \le b$ if $x$ is less or equal than $b$ in every coordinate and say that $b$ \emph{covers} $x$.
In other words, $f_B(x)$ is the smallest cost needed to cover $x$ with a point from $B$.
We say that $B$ is a \emph{Pareto cover}\footnote{In the context of multi-objective optimization, $x \le b$ is commonly referred to as $b$ \emph{Pareto-dominates} $x$.} of a probability measure $\mu$ on $[0,1]^n$ if a random point can be covered by at least one point from $B$ almost surely, i.e., $\mu(\{x \in [0,1]^n : x \le b \text{ for some } b \in B\}) = 1$.
Note that $B$ is always a Pareto cover if it contains the all-ones vector $\onevec$ (but for some probability measures, $B$ is not required to contain $\onevec$).

Given $f$, $\mu$, and an integer $k \ge 1$, we study the problem of finding a Pareto cover $B$ of $\mu$ with $|B| = k$ such that $\ev_\mu[f_B]$ is minimized.
That is, we are searching for a Pareto cover $B$ of predefined size such that the expected cost of covering a random point with a point from $B$ is smallest possible.

As an illustration, imagine a city with tourist attractions $[n] \defn \{1,\dots,n\}$ and suppose that the city wants to design $k$ books $B$ of vouchers for subsets of these attractions.
Each tourist $x$ will pick the cheapest book $b \in B$ that covers all attractions that $x$ wants to visit.
We can think of $x$ as a binary vector in $\{0,1\}^n$.
Assuming that we have some probability distribution over the tourists $x$, we want to determine $k$ books that, in expectation, cover the tourists requests in the cheapest way.
Note that, in this example, $\mu$ is a discrete measure on $\{0,1\}^n$.
Defining an appropriate cost function, an optimal Pareto cover of size $k$ is attained by a set of vectors in $\{0,1\}^n$, each corresponding to a book.

Similar applications can be given for other areas where customer's requests are satisfied based on predefined products, each corresponding to a subset of features.
Note that our model also allows for non-binary requests $x \in [0,1]^n \setminus \{0,1\}^n$, which may correspond to features that are available in different quality ranges.

For another application, imagine a gang of robbers that wants to steal paintings in an art gallery.
To this end, every gang member studies one painting $i \in [n]$ and estimates the probability $p_i$ of being able to steal it.
Their boss decides in advance which subset $S \subseteq [n]$ of paintings to steal.
If all corresponding gang members are successful (assuming that they act independently), then the gang will receive a value $v(S)$.
Otherwise, they all get caught and the gang receives $v(\emptyset)=0$.
The problem of finding a subset of paintings that maximizes the expected return can be modeled within the above framework as follows.
Let $\mu$ be the probability measure on $\{0,1\}^n$ that corresponds to setting each coordinate independently to $1$ with probability $1-p_i$, and set $f(x) := v([n])-v(\{i \in [n] : x_i = 0\})$ for all $x \in \{0,1\}^n$.
Denoting by $B^* = \{b^*, \onevec\}$ an optimal Pareto cover of size $k = 2$, it is easy to see that $S = \{i \in [n] : b_i^* = 0\}$ maximizes the expected return.

\begin{figure}
	\begin{center}
		\begin{tikzpicture}[scale=3]
			\fill[blue!5] (0,0) rectangle (1,1);
			\draw[fill=blue!10] (0,0.7826) -- (0.7826,0.7826) -- (0.7826,0) -- (0,0);
			\draw[fill=blue!15] (0,0.5217) -- (0.5217,0.5217) -- (0.5217,0) -- (0,0);
			\draw[thick] (0,0) rectangle (1,1);
			\draw[thick,fill=white] (0.5217,0.5217) circle (0.02);
			\draw[thick,fill=white] (0.7826,0.7826) circle (0.02);
			\draw[thick,fill=white] (1,1) circle (0.02);
		\end{tikzpicture}
		\hspace{2em}
		\begin{tikzpicture}[scale=3]
			\fill[blue!5] (0,0) rectangle (1,1);
			\draw[fill=blue!10] (0.4347,0.5217) -- (0.9565,0.5217) -- (0.9565,0) -- (0,0);
			\draw[fill=blue!15] (0,0.7826) -- (0.4347,0.7826) -- (0.4347,0) -- (0,0);
			\draw[thick] (0,0) rectangle (1,1);
			\draw[thick,fill=white] (0.4347,0.7826) circle (0.02);
			\draw[thick,fill=white] (0.9565,0.5217) circle (0.02);
			\draw[thick,fill=white] (1,1) circle (0.02);
		\end{tikzpicture}
		\hspace{2em}
		\begin{tikzpicture}[scale=3]
			\fill[blue!5] (0,0) rectangle (1,1);
			\draw[fill=blue!10] (0.5217,0.4347) -- (0.5217,0.9565) -- (0,0.9565) -- (0,0);
			\draw[fill=blue!15] (0.7826,0) -- (0.7826,0.4347) -- (0,0.4347) -- (0,0);
			\draw[thick] (0,0) rectangle (1,1);
			\draw[thick,fill=white] (0.7826,0.4347) circle (0.02);
			\draw[thick,fill=white] (0.5217,0.9565) circle (0.02);
			\draw[thick,fill=white] (1,1) circle (0.02);
		\end{tikzpicture}
	\end{center}
	\caption{For the Lebesgue measure (uniform distribution) on $[0,1]^2$ with cost $f(x_1,x_2) = x_1 + x_2$, the optimal Pareto covers of size $k = 3$ are $\{(\nicefrac{12}{23},\nicefrac{12}{23}),\, (\nicefrac{18}{23},\nicefrac{18}{23}),\, (1,1)\}$, $\{(\nicefrac{10}{23},\nicefrac{18}{23}),\, (\nicefrac{22}{23},\nicefrac{12}{23}),\, (1,1)\}$, and $\{(\nicefrac{18}{23},\nicefrac{10}{23}),\, (\nicefrac{12}{23},\nicefrac{22}{23}),\, (1,1)\}$.}
	\label{figFirstExample}
\end{figure}

Determining optimal Pareto covers of a given size is a difficult problem.
Finding an analytical solution seems to be non-trivial even for very basic probability measures and cost functions, see Figure~\ref{figFirstExample}.
In this work, we study the problem from the point of view of complexity theory.
We particularly focus on product measures and linear cost functions, for which the problem is already hard as our first results show.

Here, for a vector $p \in [0,1]^n$, let $\mu_p$ denote the discrete probability measure on $\{0,1\}^n$ where each coordinate is Bernoulli distributed with probability $p_i$, i.e.,
\begin{equation}
	\label{eqBernoulli}
	\mu_p(\{x\}) = \prod_{i:x_i=1} p_i \prod_{i:x_i=0}(1-p_i)
\end{equation}
for all $x \in \{0,1\}^n$.
Moreover, for a linear cost function $f$ given by $f(x) = c\t x$ for some $c \in \R^n$ and a finite set $B$, we overload our notation $c_B \defn f_B$.

\begin{theorem}
	\label{thmNPcomplete}
	Let $k \in \Z$, $k \ge 2$. Given $p \in ([0,1] \cap \Q)^n$, $c \in \Q^n$, $\gamma \in \Q$, the problem of deciding whether there is some Pareto cover $B$ of $\mu_p$ such that $|B| = k$ and $\ev_{\mu_p}[c_B] \le \gamma$ is weakly NP-complete for $k$ constant. Moreover, there are values of $k\in\Theta(n)$ for which it is strongly NP-hard.
\end{theorem}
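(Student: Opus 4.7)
The plan is to prove NP membership and NP-hardness separately.

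For membership in NP with $k$ constant, observe that although $\mu_p$ is supported on up to $2^n$ points, the expected cost $\ev_{\mu_p}[c_B]$ can be evaluated exactly in polynomial time: sort $B$ by cost, and write the probability that a given $b_j$ is the cheapest point covering a sample $x$ as an inclusion--exclusion sum over the at most $2^k$ events ``$b_{j'}$ covers $x$''; each such event is a conjunction of ``$x_i = 0$'' across a subset of coordinates, and hence has a closed-form product expression in the $p_i$'s.

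For weak NP-hardness at fixed $k \geq 2$, I would focus first on $k = 2$. Assuming all $p_i \in (0,1)$, the support of $\mu_p$ is all of $\{0,1\}^n$, which forces $\onevec \in B$; hence $B = \{\onevec, b\}$ for some $b \in \{0,1\}^n$. Setting $S = \{i : b_i = 0\}$, a direct computation gives
\[
  \ev_{\mu_p}[c_B] \;=\; c\t \onevec \;-\; \Bigl(\sum_{i \in S} c_i\Bigr)\prod_{i \in S}(1 - p_i),
\]
so minimizing the cost is equivalent to maximizing $\phi(S) \defn \bigl(\sum_{i \in S} c_i\bigr) \prod_{i \in S}(1 - p_i)$ over $S \subseteq [n]$. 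I would reduce from \textsc{Subset Sum}: given $a_1, \dots, a_n, T$, set $c_i \defn a_i$ and choose $1 - p_i$ so that $\prod_{i \in S}(1 - p_i)$ depends only on $\sum_{i \in S} a_i$, so that the resulting univariate function (essentially $g(t) = t \alpha^t$ for some $\alpha < 1$) is uniquely integer-maximized at $t = T$; tuning $\alpha$ into the narrow interval $\bigl((T-1)/T,\, T/(T+1)\bigr)$ achieves this. The extension to constant $k \geq 3$ would be by padding with $k-2$ auxiliary coordinates whose parameters force any optimal cover to spend $k-2$ of its points on them, leaving the remaining two for the embedded instance.

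The principal obstacle here is bit-size: the natural choice $1 - p_i = \alpha^{a_i}$ encodes $\alpha^{a_i}$, whose denominator has $\Theta(a_i)$ bits and is therefore exponential in the \textsc{Subset Sum} input length. Overcoming this calls for a more careful construction --- for instance, spreading each $a_i$ across $O(\log a_i)$ binary-weight coordinates so that the desired multiplicative structure is assembled from polynomially many factors, or reducing from a weakly NP-hard source problem whose numerical structure avoids the blowup, all while preserving the unimodality argument that pins down the critical value $t = T$.

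For strong NP-hardness at $k \in \Theta(n)$, I would reduce from a strongly NP-hard combinatorial problem such as \textsc{3-Partition} or \textsc{Set Cover}: now one can afford roughly one cover point per ``bin'' or ``chosen set'' and calibrate $c$ and $p$ so that a bounded-cost Pareto cover corresponds precisely to a valid packing or cover. The delicate step here will be tuning the cost scales so that no suboptimal mixture of cover points can undercut a genuine combinatorial solution, which typically requires choosing costs on well-separated scales.
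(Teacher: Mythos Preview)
Your overall architecture is correct and matches the paper closely: membership in NP via a polynomial-time evaluation of $\ev_{\mu_p}[c_B]$ for constant $k$, weak NP-hardness for $k=2$ via the observation that $B=\{\onevec,b\}$ and one must maximize $\phi(S)=\bigl(\sum_{i\in S}c_i\bigr)\prod_{i\in S}(1-p_i)$, and strong NP-hardness for $k\in\Theta(n)$ via \textsc{3-Partition}. A few differences and one genuine gap are worth noting.

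\textbf{Bit-size for $k=2$.} You correctly identify the obstacle, but your first proposed fix --- spreading each $a_i$ across $O(\log a_i)$ binary-weight coordinates --- does not help: the coordinate carrying weight $2^{e}$ still needs $1-p=\alpha^{2^{e}}$, whose encoding length is $\Theta(2^{e})$ bits, i.e.\ still exponential. Your second suggestion (pick a source problem whose numerics avoid the blowup) is exactly what the paper does. It reduces from \textsc{Partition}, sets $\alpha=2/\sum_i a_i$ so that every exponent $\alpha a_i$ lies in $[0,2]$, and then \emph{approximates} $1-p_i\approx e^{-\alpha a_i}$ by truncating the Taylor series to polynomially many terms. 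The delicate part is showing that this rounding does not destroy the gap between ``yes'' and ``no'' instances; the paper controls the perturbation by a factor $(1\pm\beta)^n$ with $\beta=\Theta(\alpha^2/n)$ and uses integrality of $\sum_{i\in I}a_i$ to get a quantitative separation $h(\alpha^{-1})-h(\alpha^{-1}\pm 1)\gtrsim \alpha^2 h(\alpha^{-1})$.

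\textbf{Extending to $k\ge 3$.} The paper does not pad with auxiliary coordinates. For $k=3$ it instead scales all $p_i$ down by a huge constant so that $\zerovec$ is forced into any near-optimal cover, reducing back to the $k=2$ structure. For $k\ge 4$ it reduces from $(k-2)$-way \textsc{Number Partitioning}: an extra coordinate with tiny probability but enormous cost forces $\onevec$ to be used only as a last resort, so the remaining $k-2$ non-trivial cover points must partition $[m]$, and the objective becomes (approximately) $\sum_j\bigl(\sum_{i\in I_j}a_i\bigr)^2$, minimized exactly at a balanced partition. This single reduction then yields the strong NP-hardness claim directly by specializing to \textsc{3-Partition} (so $k=\frac{n+5}{3}$). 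Your padding idea may be workable, but forcing $k-2$ cover points to be ``wasted'' on auxiliary coordinates without interacting with the main instance is not automatic and would need a careful argument.

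\textbf{Membership in NP.} Your inclusion--exclusion evaluation is fine. The paper instead computes, by a short dynamic program over coordinates $i=1,\dots,n$, the probabilities $\mu(\{x:J^i(x)=J\})$ for all $J\subseteq[k]$; both approaches are polynomial for constant $k$ and equivalent in spirit.
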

If the size of the Pareto cover is part of the input, we do not know whether the corresponding problem is in NP.
In fact, computing the objective value of a single Pareto cover is already difficult:
\begin{proposition}
	\label{propSharpPHard}
	Given a Pareto cover $B$ for the uniform distribution $\mu$ on $\{0,1\}^n$, the problem of computing $\ev_\mu[\onevec_B]$ is \#P-hard.
\end{proposition}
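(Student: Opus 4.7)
The plan is to reduce the \#P-hard problem of counting vertex covers in a graph to the computation of $\ev_\mu[\onevec_B]$. Let $G = (V,E)$ be a graph with $|V|=n \ge 2$, and let $e_i \in \{0,1\}^n$ denote the $i$-th standard unit vector. I construct the Pareto cover
\[
B \defn \{\onevec - e_u - e_v : \{u,v\} \in E\} \cup \{\onevec\} \subseteq \{0,1\}^n.
\]
Since $\onevec \in B$, the set $B$ is a Pareto cover for every probability measure on $\{0,1\}^n$, including the uniform measure $\mu$, and $|B| \le |E|+1$ is polynomial in the input size. The key design choice is that every element of $B \setminus \{\onevec\}$ has exactly $n-2$ ones; this will force $\onevec_B$ to take only two possible values.

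Next I analyze $\onevec_B(x)$ for $x \in \{0,1\}^n$. The inequality $x \le \onevec - e_u - e_v$ holds precisely when $x_u = x_v = 0$. Hence $\onevec_B(x) = n-2$ whenever some edge $\{u,v\} \in E$ satisfies $x_u = x_v = 0$, and otherwise the only element of $B$ covering $x$ is $\onevec$, giving $\onevec_B(x) = n$. The second condition is equivalent to $\{i \in [n] : x_i = 1\}$ being a vertex cover of $G$, so the number of $x$ with $\onevec_B(x) = n$ is exactly the number of vertex covers, which I denote by $\#\mathrm{VC}(G)$. Taking the expectation therefore yields
\[
\ev_\mu[\onevec_B] \;=\; \frac{(n-2)\bigl(2^n - \#\mathrm{VC}(G)\bigr) + n \cdot \#\mathrm{VC}(G)}{2^n} \;=\; (n-2) + \frac{\#\mathrm{VC}(G)}{2^{n-1}},
\]
from which $\#\mathrm{VC}(G)$ is recovered in polynomial time. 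Since counting vertex covers is \#P-hard, this completes the Turing reduction.

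The main conceptual step is picking the right gadget, namely giving every $b \in B \setminus \{\onevec\}$ the same weight $n-2$. With mixed weights, $\onevec_B(x)$ would depend nontrivially on \emph{which} of the $b \in B$ achieves the minimum on each $x$, so the expectation would become a sum of inclusion–exclusion terms over subsets of $B$ rather than a linear function of a single combinatorial quantity. The uniform-weight construction collapses this structure, linking $\ev_\mu[\onevec_B]$ directly to the size of the monotone union $\bigcup_{\{u,v\} \in E} \{x : x_u = x_v = 0\}$, which is the standard \#P-hard quantity equivalent to counting non-vertex-covers. Everything else is routine arithmetic.
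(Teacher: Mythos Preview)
Your proof is correct and essentially identical to the paper's: both reduce from counting vertex covers by taking $B = \{\onevec - e_u - e_v : \{u,v\} \in E\} \cup \{\onevec\}$ (the paper phrases $\onevec - e_u - e_v$ as ``the characteristic vector of $V\setminus e$'') and derive the same formula $\ev_\mu[\onevec_B] = (n-2) + \#\mathrm{VC}(G)/2^{n-1}$.
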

On the positive side, for constant $k$, we derive a fully polynomial-time approximation scheme (FPTAS) for general product measures that satisfy some mild assumptions.
Essentially, we will consider products of \emph{nice} measures $(\mu_i)_{i=1}^n$ on $[0,1]$ that allow us to efficiently query an approximation of $\mu_i((a,b])$ for each $a,b,i$ as well as a positive lower bound on the expectation of the identity on $[0,1]$ with respect to each $\mu_i$.
A more formal definition will be given later.
\begin{theorem}
	\label{thmFPTAS}
	Let $k \in \mathbb{N}$ be fixed.
	Given a nice product measure $\mu$ on $[0,1]^n$ and $c \in \Q^n_{\ge 0}$, the problem of computing an (approximately) optimal Pareto cover of size $k$ with respect to $\mu$ and $c$ admits an FPTAS.\label{TheoFPTASGeneral}
\end{theorem}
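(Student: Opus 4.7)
The plan is to rewrite the objective as a signed combination of products over coordinates of per-coordinate measure queries, discretize the candidate values $b^j_i$ using the niceness oracle, and then run a dynamic program that sweeps over coordinates, tracking a discretized summary of the $2^k-1$ product terms that appear.

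First, given a candidate cover $B = \{b^1, \dots, b^k\}$ with $c\t b^1 \le \dots \le c\t b^k$, the independence of the coordinates and inclusion-exclusion on the events $\{x \le b^{j'}\}$ yield
\[
	\ev_\mu[c_B] \;=\; \sum_{j=1}^k (c\t b^j) \sum_{S \subseteq [j-1]} (-1)^{|S|} \prod_{i=1}^n \mu_i\bigl([0, \min_{j' \in S \cup \{j\}} b^{j'}_i]\bigr),
\]
and the Pareto cover condition admits an analogous expansion. Defining $P_T \defn \prod_i \mu_i([0, \min_{j \in T} b^j_i])$ for nonempty $T \subseteq [k]$, the objective becomes a fixed function (depending only on $k$) of the $2^k-1$ numbers $P_T$ and the $k$ book costs $c\t b^j$.

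Second, using the niceness oracle I would construct, for each coordinate $i$, a sorted grid $G_i \subseteq [0,1]$ of size polynomial in $n$ and $1/\eps$ such that consecutive values $v_{i,\ell} < v_{i,\ell+1}$ satisfy $\mu_i([0, v_{i,\ell+1}]) \le (1+\eps')\,\mu_i([0, v_{i,\ell}])$ for a sufficiently small target precision $\eps'$. Niceness also guarantees $\ev_{\mu_i}[\id] \ge \delta_0 > 0$ for some polynomially small $\delta_0$, so $\mathrm{OPT} \ge \ev_\mu[c\t x] = \sum_i c_i \ev_{\mu_i}[\id] \ge \delta_0 \|c\|_1$, and hence an absolute error of $O(\eps \delta_0 \|c\|_1)$ on the approximate objective gives a $(1+\eps)$-multiplicative guarantee. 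This pins down how fine all later grids must be.

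Third, I would run a DP over coordinates $i = 1, \dots, n$. The state after step $i$ stores, for each nonempty $T \subseteq [k]$, a discretized value of the partial product $\prod_{i' \le i} \mu_{i'}([0, \min_{j \in T} b^j_{i'}])$ on a multiplicative grid fine enough to survive $n$ multiplications, and, for each book $j$, a discretized value of the partial cost $\sum_{i' \le i} c_{i'} b^j_{i'}$ on an additive grid of resolution $\Theta(\eps \delta_0 \|c\|_1 / k)$. The transition at coordinate $i+1$ enumerates $(b^1_{i+1}, \dots, b^k_{i+1}) \in G_{i+1}^k$ and updates each partial product and each partial cost; fixing the cost ordering of the books WLOG, for constant $k$ both the state space and the branching factor are polynomial, giving polynomial total running time. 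The Pareto cover condition is enforced by separately handling the two cases $\onevec \in B$ (always feasible; optimize over $k-1$ other books) and $\onevec \notin B$ (filter the final DP states using the inclusion-exclusion expression for coverage, rounding $b^j_i$ values carefully so that coverage is preserved).

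The main technical obstacle will be the error analysis. Since $\pr_\mu[b^j \text{ is the cheapest covering book}]$ is a signed sum of $P_T$'s, multiplicative errors on the $P_T$'s do not translate into multiplicative errors on the individual probabilities, and cancellations may be almost complete; one therefore has to bound everything on an absolute scale. It is precisely the universal lower bound $\mathrm{OPT} = \Omega(\delta_0 \|c\|_1)$ coming from niceness that makes polynomial-precision approximation of each $P_T$ and each partial cost sufficient. Calibrating the resolution of the product grids (which are multiplied $n$ times) and of the cost grids (which are summed $n$ times) so that the cumulative error stays within $\eps \cdot \mathrm{OPT}$ is the delicate step that drives the required precision and hence the FPTAS running time.
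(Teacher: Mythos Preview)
Your overall plan is workable, but it takes a genuinely harder route than the paper, and the grid step as stated has a gap.

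The paper's central simplification is to avoid inclusion--exclusion entirely. Rather than your products $P_T=\prod_i\mu_i\bigl([0,\min_{j\in T}b^j_i]\bigr)$, it tracks for every $J\subseteq[k]$ the probability $P_J=\mu(\{x:J(x)=J\})$ that \emph{exactly} $J$ is the set of covering books. These quantities are nonnegative, partition the probability mass, and satisfy a sign-free coordinate recursion (Lemma~\ref{LemInductionStep}); the objective is then the nonnegative combination $\sum_{\emptyset\ne J}P_J\cdot\min_{j\in J}c^\top b^j$ (Lemma~\ref{lemObjectiveWithJs}). Consequently, multiplicative rounding of the $P_J$'s and of the partial costs to powers of $1+\delta$ directly yields a multiplicative error on the objective, with no cancellation to control. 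The lower bound $\mathrm{OPT}\ge\alpha\sum_ic_i$ is used only once, in the reduction from general to discrete measures (Lemma~\ref{LemGoodDiscretizedSolution}); the DP on the discrete instance does not need it. Your signed decomposition instead forces an absolute-error regime throughout, making the OPT lower bound load-bearing in both the discretization and the DP---doable, but exactly the ``delicate step'' you flag.

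Separately, the grid $G_i$ you describe, with $\mu_i([0,v_{i,\ell+1}])\le(1+\eps')\,\mu_i([0,v_{i,\ell}])$, is not constructible in general: if $\mu_i(\{0\})=0$ the first ratio is infinite, and large atoms force CDF jumps no finite grid can smooth. More importantly, a CDF-based grid gives no control on $|v_{i,\ell+1}-v_{i,\ell}|$, so snapping $b^j_i$ to $G_i$ can change $c_ib^j_i$ by an unbounded amount and destroy near-optimality. The paper instead takes a geometric grid on \emph{coordinate values} $q_\ell=\epsilon\alpha(1+\epsilon)^{\ell-1}$ (Definition~\ref{DefQueryPoints}): rounding each $b^j_i$ up multiplies it by at most $1+\epsilon$, the additive $\epsilon\alpha$ near zero is absorbed by Proposition~\ref{PropExpectationBoundsOpt}, and the discrete measures are then read off the oracle at those fixed grid points. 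Replacing your CDF grid by this coordinate grid and, ideally, your $P_T$'s by the partition probabilities $P_J$, would remove the obstacle you identified.
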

Our paper is structured as follows.
In Section~\ref{secRelatedWork} we briefly discuss related work.
The proofs of Theorem~\ref{thmNPcomplete} and Proposition~\ref{propSharpPHard} are given in Section~\ref{secHardness}, where we also derive results for more general discrete distributions that will be used in our FPTAS.
The latter and hence the proof of Theorem~\ref{thmFPTAS} is presented in Section~\ref{secApproximation}.
We close with some open questions in Section~\ref{secQuestions}.

\section{Related Work}
\label{secRelatedWork}

To the best of our knowledge our setting for the general case $k>2$ has not been studied in the literature.
For the case $k=2$ similar problems have been studied in the area of \emph{stochastic optimization} in the context of \emph{chance constrained optimization}. Here one aims to find an optimal solution to a problem with stochastic constraints. A solution to the problem then needs to fulfill the constraints with probability $1 - \delta$ for some $\delta > 0$.

Linear chance constrained problems are of the form 
\begin{equation} \label{eq:chance_constrained_optimization}
\min \{\langle c, x \rangle : x \in X, \mathbb{P}_{\xi \sim \mu}[Ax \ge \xi] \ge 1 - \delta\}
\end{equation}
for a domain $X$, a distribution $\mu$, matrix $A$ and parameter $\delta$.
Note that our problem for $k=2$ with linear cost functions $f$ can be formulated as such a problem under the assumption that every Pareto cover has to contain $\onevec$. It remains to find the second second vector in the optimal Pareto cover, for which one can guess the probability that it covers an element drawn from $\mu$. This fits exactly into the framework of \eqref{eq:chance_constrained_optimization} where $A$ is the identity matrix and $X = [0,1]^n$.
For an overview on the topic, we refer to the work of Nemirovski and Shapiro~\cite{nemirovski2007convex} and Luedtke, Ahmed, and Nemhauser~\cite{luedtke2010integer}.
In principle, it is possible to extend this idea to the case $k \ge 3$, for instance by using techniques from~\cite{luedtke2010integer}.
However, it is unclear whether theoretically efficient (approximation) algorithms can be obtained by such an approach.

The Pareto cover problem has an interesting interpretation in the context of tropical geometry. It can be equivalently phrased as a problem of partitioning $[0,1]^n$ into $k$ regions $R_i$. For each region, one selects the \emph{tropical barycenter}, which is the coordinate-wise maximum of all its elements. This resembles a tropicalized version of classical Euclidean clustering algorithms and barycenters.

For randomized algorithms, tools from \emph{statistical learning theory} (sample complexity, coresets,\ldots) can be used to study the Pareto cover problem.
A natural approach would be to replace $\mu$ by a probability distribution $\tilde \mu$ that is a combination of polynomially many Dirac-distributions such that $\tilde \mu$ is ``close'' to $\mu$, and reduce the problem to finding an optimal Pareto cover for $\tilde \mu$.
In this work, however, we focus on deterministic algorithms and defer a discussion of such techniques to the full version of our paper.

\section{Hardness results}
\label{secHardness}

In this section, we show that finding optimal Pareto covers (of given sizes) is a computationally hard problem, even for simple discrete product measures and linear cost functions.
In particular, we prove Theorem~\ref{thmNPcomplete} and Proposition~\ref{propSharpPHard}.

In the first part, we focus on the case where $\mu$ is a discrete probability measure on $\{0,1\}^n$ given by some input vector $p \in [0,1]^n$ such that $\mu = \mu_p$, see~\eqref{eqBernoulli}.
We consider the following problem.

\begin{definition}
	The decision variant of the \emph{binary Pareto cover problem} is defined as follows:
	Given $p \in ([0,1] \cap \Q)^n$, $c \in \Q^n_{\ge 0}$, $\gamma \in \Q$, and $k \in \Z_{\ge 1}$, decide whether there is some Pareto cover $B$ of $\mu_p$ such that $|B| = k$ and $\ev_{\mu_p}[c_B] \le \gamma$.
\end{definition}

We assume that $p$, $c$, and $\gamma$ are given by their binary encodings.
We leave open how $k$ is encoded since in our applications it will be always polynomially bounded in $n$ (see Proposition~\ref{propSharpPHard}) or mostly even constant.

In Section~\ref{secHardnessConstantK}, we show that the binary Pareto cover problem is weakly NP-hard if $k$ is a fixed constant. In addition, we show that for $k=\frac{n+5}{3}$, the problem is strongly NP-hard.
For the case that $k$ is part of the input, we prove Proposition~\ref{propSharpPHard} showing that the problem of computing $\ev_{\mu_p}[c_B]$ for a given Pareto cover $B$ is \#P-hard in Section~\ref{secHardnessGeneralK}.
In view of this, it is unclear whether the binary Pareto cover problem is in NP if $k$ is part of the input.
However, for constant $k$, we establish in Section~\ref{secDiscreteInNP} that the problem is in NP, even for a more general class of discrete product measures. This result completes the proof of Theorem~\ref{thmNPcomplete} and plays an important role in the design of our approximation algorithm.

\subsection{NP-hardness}
\label{secHardnessConstantK}

In this section, we consider the binary Pareto cover problem.
Note that it can be solved efficiently in the case $k=1$ since then $B = \{b\}$ is an optimal solution, where $b \in \{0,1\}^n$ with $b_i = 1$ if and only if $p_i > 0$.
In the remainder, we show that the binary Pareto cover problem is  weakly NP-hard for $k=2$.
Similar, but slightly different proofs for the cases $k = 3$ and $k\geq 4$ can be found in Appendix~\ref{SecHardnessKequal3} and Appendix~\ref{SecHardnesskgeq4}, respectively. Moreover, our reduction for $k\geq 4$ proves strong NP-hardness for $k=\frac{n+5}{3}.$

In order to show that the binary Pareto cover problem is NP-hard for $k=2$, let us recall the \texttt{PARTITION} problem \cite{garey1990computers}, which is well-known to be (weakly) NP-hard: Given $a_1,\dots,a_n \in \Z_{\ge 1}$ with $\sum_{i=1}^n a_i$ even, decide whether there is a subset $I\subseteq [n]$ such that $\sum_{i\in I} a_i = \frac{1}{2} \sum_{i=1}^n a_i$.
We provide a reduction from \texttt{PARTITION} to the binary Pareto cover problem with $k=2$.

Before defining it precisely, let us describe the idea first.
Given a \texttt{PARTITION} instance $a_1,\dots,a_n$, set $\alpha \defn \frac{2}{a_1 + \dots + a_n}$ and consider an instance of the binary Pareto cover problem with $p_i = 1 - \eul^{-\alpha\cdot a_i}$ and $c_i = a_i$ for all $i \in [n]$.
Since all $c_i$ and $p_i$ are positive, every optimal Pareto cover of $\mu_p$ is of the form $B = \{b,\onevec\}$ where $b \in \{0,1\}^n$.
Setting $I \defn \{i \in [n] : b_i = 0 \}$, we see that the cost $\ev_{\mu_p}[c_B]$ of $B$ satisfies
\begin{align*}
	\sum_{i=1}^n a_i  - \ev_{\mu_p}[c_B] 
	&= \onevec\t c - \ev_{\mu_p}[c_B] \\
	& = \onevec\t c - c\t b \cdot \mu_p(\{x \in \{0,1\}^n : x \le b\}) - c\t \onevec \cdot \mu_p(\{x \in \{0,1\}^n : x \not \le b\})\\
	& = c\t (\onevec - b) \cdot \mu_p(\{x \in \{0,1\}^n : x \le b\}) \\
	& = \prod \nolimits_{i \in I} (1-p_i) \cdot \sum \nolimits_{i \in I} a_i \\
	& = \eul^{-\alpha \sum_{i \in I} a_i} \cdot \sum \nolimits_{i \in I} a_i \\
	& = h \left( \sum \nolimits_{i \in I} a_i \right),
\end{align*}
where $h \colon \R \to \R$, $h(x) = x \cdot \eul^{-\alpha\cdot x}$.
Since $h$ has its unique maximum at $x=\alpha^{-1}$, we see that $B = \{b,\onevec\}$ is a Pareto cover with $\ev_{\mu_p}[c_B] \le \sum_{i=1}^n a_i - h(\alpha^{-1})$ if and only if $I = \{i \in [n] : b_i = 0 \}$ satisfies $\sum_{i \in I} a_i = \alpha^{-1} = \frac{1}{2} \sum_{i=1}^n a_i$.
In other words, if $a_1,\dots,a_n$ is a ``yes'' instance for \texttt{PARTITION}, then there is a Pareto $B$ cover of size $k=2$ of $\mu_p$ with cost $\ev_{\mu_p}[c_B] \le \sum_{i=1}^n a_i - h(\frac{1}{2} \sum_{i=1}^n a_i)$.
If $a_1,\dots,a_n$ is a ``no'' instance, then every Pareto cover of size $k=2$ will have cost $\ev_{\mu_p}[c_B] > \sum_{i=1}^n a_i - h(\frac{1}{2} \sum_{i=1}^n a_i)$.

Unfortunately, the probabilities that we have used in the above argument cannot be polynomially represented.
However, we show that we can efficiently round them such that the above strategy still works.
More specifically, our probabilities and the threshold cost $\gamma$ will be defined as follows.

\begin{restatable}{lemma}{lemHardnessProbabilities} \label{lemHardnessProbabilities}
Given $a_1,\dots,a_n \in \Z_{\ge 1}$, we can compute $p_1,\dots,p_n \in [0,1]$ and $\gamma \in \Q$ in polynomial time such that
	\[
		\frac{1-\beta}{\eul^{\alpha \cdot a_i}} \le 1-p_i \le \frac{1+\beta}{\eul^{\alpha \cdot a_i}}
		\quad
		\text{for all } i \in [n],
		\text{and}
		\quad
		\frac{(1-\beta)^{n+2}}{\alpha \cdot \eul}
		\le \sum_{i=1}^n a_i - \gamma
		\le \frac{(1-\beta)^n}{\alpha \cdot \eul},
	\]
	where $\alpha \defn \frac{2}{\sum_{i=1}^n a_i}$ and $\beta \defn \frac{\alpha^2}{48(n+1)}$.
\end{restatable}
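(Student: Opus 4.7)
The plan is to obtain $p_i$ and $\gamma$ by truncating suitable Taylor series to a polynomial number of terms, then verifying that the accumulated multiplicative errors fit inside the slack provided by $\beta$. The key preliminary observation is that $\log(1/\beta)$ is polynomial in the input size: since each $a_i \geq 1$ we have $\sum_j a_j \geq n$, hence $\alpha \leq 2/n$, and therefore
$$1/\beta \;=\; 48(n+1)/\alpha^2 \;\leq\; 12(n+1)\Bigl(\sum\nolimits_j a_j\Bigr)^{\!2},$$
so any quantity requiring absolute precision $\beta$ can be computed with only polynomially many bits.

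For each $i$, I would compute a rational approximation $q_i$ of $e^{-\alpha a_i}$ by summing the first $N+1$ terms of its Taylor series, choosing $N$ just large enough that the tail is bounded by $\beta\cdot e^{-2}/2$. Using $\alpha a_i \leq \alpha \sum_j a_j = 2$ together with Stirling's bound $k! \geq (k/e)^k$ shows that $N = O(\log(1/\beta))$ suffices, so $q_i$ has polynomial bit length. Combined with $e^{-\alpha a_i} \geq e^{-2}$, the resulting $q_i$ obeys $|q_i - e^{-\alpha a_i}| \leq \beta\cdot e^{-\alpha a_i}$, so setting $p_i := 1 - q_i$ (and clipping to $[0,1]$, which changes nothing for this precision) meets the first pair of inequalities.

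For $\gamma$, I would analogously compute a rational approximation $\tau$ of $1/e$ with multiplicative error at most $\beta/2$ via its Taylor series, and then set
$$\gamma \;:=\; \sum\nolimits_j a_j \;-\; (1-\beta)^{n+1}\cdot \tau \cdot \tfrac{1}{2}\sum\nolimits_j a_j.$$
Note that $(1-\beta)^{n+1}$ is rational and can be expanded explicitly in polynomial time. By construction $\sum_j a_j - \gamma$ equals $(1-\beta)^{n+1}/(\alpha e)$ multiplied by the factor $\tau e \in [1-\beta/2,\, 1+\beta/2]$. The required lower bound then follows from $1 - \beta/2 \geq 1 - \beta$, and the upper bound from $(1+\beta/2)(1-\beta) \leq 1 - \beta/2 - \beta^2/2 \leq 1$, which converts the factor $1+\beta/2$ into an extra $1/(1-\beta)$ as needed.

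The main obstacle is the routine-but-careful truncation analysis: one must certify that a polynomial number of Taylor terms yields rationals of polynomial bit length that simultaneously satisfy all the multiplicative inequalities, and that the intermediate quantities (in particular $p_i$) remain in $[0,1]$. Because $\beta$ is inverse polynomial in the input size and appears with considerable slack in both targets, the required error bookkeeping reduces to elementary estimates on the exponential series.
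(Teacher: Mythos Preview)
Your proposal is correct and follows essentially the same route as the paper: the paper likewise truncates the Taylor series of $e^{-x}$ (packaged into a small auxiliary lemma guaranteeing a $(1\pm 2^{-m})$-multiplicative approximation for $x\in[0,2]$), sets $p_i=1-q_i$ for such an approximation $q_i$ of $e^{-\alpha a_i}$, and defines $\gamma=\sum_i a_i-(1-\beta)^{n+1}\alpha^{-1}\gamma_0$ for a $(1\pm\beta)$-approximation $\gamma_0$ of $e^{-1}$, verifying the required two-sided bound via $(1-\beta)(1+\beta)<1$. Your choice of precision $\beta/2$ for $\tau$ and the inequality $(1+\beta/2)(1-\beta)\le 1$ play the identical role.
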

\begin{proof}
	See Appendix~\ref{SecAppendixHardnessk=2}.
\end{proof}

Let $p_1,\dots,p_n$ and $\gamma$ be given as in the above statement.
Note that we have $0 < p_i \le 1$ for all $i \in [n]$ since $p_i \le 1 - \frac{1-\beta}{\eul^{\alpha \cdot a_i}} \le 1$ and
\[
	p_i \ge 1 - \frac{1+\beta}{\eul^{\alpha \cdot a_i}}
	> 1 - \frac{1+\beta}{1 + \alpha \cdot a_i}
	\ge 1 - \frac{1+\beta}{1 + \alpha}
	> 0.
\]
Finally, set $c_i := a_i$ for all $i \in [n]$.
It remains to prove the following lemma.
\begin{lemma}
	\label{lemHardnessReduction}
	There is a subset $I \subseteq [n]$ with $\sum_{i\in I} a_i = \frac{1}{2} \sum_{i=1}^n a_i$ if and only if there is a Pareto cover $B$ of $\mu_p$ with $|B| = 2$ and $\ev_{\mu_p}[c_B] \le \gamma$.
\end{lemma}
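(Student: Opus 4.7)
The plan is to formalize the computation sketched informally before Lemma~\ref{lemHardnessProbabilities}, while tracking the multiplicative slack introduced by replacing the idealized probabilities $1 - \eul^{-\alpha a_i}$ by the explicit rationals $p_i$ from Lemma~\ref{lemHardnessProbabilities}. First I would observe that, since every $p_i > 0$, the point $\onevec$ has positive mass under $\mu_p$ and must be covered, forcing $\onevec \in B$; and since $\mu_p$ is supported on $\{0,1\}^n$ and $c \ge \zerovec$, the second point of any size-$2$ Pareto cover may be taken to lie in $\{0,1\}^n$, because for any $b \in [0,1]^n$ the set $S = \{i : b_i = 1\}$ alone determines which $\{0,1\}^n$-points $b$ dominates, and replacing $b$ by its characteristic vector $\onevec_S$ keeps this set the same while not increasing $c^\intercal b$. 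Writing $I \defn \{i \in [n] : b_i = 0\}$ and $s \defn \sum_{i \in I} a_i$, the identity carried out in the excerpt gives
\[
	\sum_{i=1}^n a_i - \ev_{\mu_p}[c_B] \;=\; s \cdot \prod_{i \in I}(1 - p_i),
\]
and substituting the sandwich $\tfrac{1-\beta}{\eul^{\alpha a_i}} \le 1-p_i \le \tfrac{1+\beta}{\eul^{\alpha a_i}}$ from Lemma~\ref{lemHardnessProbabilities} yields, with $h(x) \defn x\, \eul^{-\alpha x}$,
\[
	(1-\beta)^n h(s) \;\le\; \sum_{i=1}^n a_i - \ev_{\mu_p}[c_B] \;\le\; (1+\beta)^n h(s).
\]

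For the forward direction, a balanced partition $I$ gives $s = \alpha^{-1}$ and hence $h(s) = \tfrac{1}{\alpha\eul}$, so the lower bound combined with the upper estimate $\sum_i a_i - \gamma \le \tfrac{(1-\beta)^n}{\alpha\eul}$ from Lemma~\ref{lemHardnessProbabilities} immediately yields $\ev_{\mu_p}[c_B] \le \gamma$.

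For the reverse direction, assume \texttt{PARTITION} admits no balanced partition. Then for every choice of $b$, the nonnegative integer $s$ differs from the integer $\alpha^{-1} = \tfrac{1}{2}\sum_i a_i$ by at least $1$. The function $h$ is unimodal with maximum $\tfrac{1}{\alpha\eul}$ at $x=\alpha^{-1}$, with $h'(\alpha^{-1})=0$ and $h''(\alpha^{-1}) = -\alpha/\eul$, so a second-order Taylor estimate (or a direct computation of $h(\alpha^{-1}) - h(\alpha^{-1}\pm 1)$) shows that, uniformly in integer $s \ne \alpha^{-1}$,
\[
	h(s) \;\le\; \tfrac{1}{\alpha\eul}\bigl(1 - c\,\alpha^2\bigr)
\]
for some absolute constant $c>0$. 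Combining this with the upper bound on $\sum_i a_i - \ev_{\mu_p}[c_B]$ above and the lower estimate $\sum_i a_i - \gamma \ge \tfrac{(1-\beta)^{n+2}}{\alpha\eul}$ from Lemma~\ref{lemHardnessProbabilities}, it suffices to verify the inequality $(1+\beta)^n(1 - c\alpha^2) < (1-\beta)^{n+2}$; using standard bounds $(1\pm\beta)^m = 1 \pm O(m\beta)$ in the relevant range, this reduces to requiring $n\beta = O(\alpha^2)$, which is precisely what the choice $\beta = \tfrac{\alpha^2}{48(n+1)}$ in Lemma~\ref{lemHardnessProbabilities} guarantees.

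The main obstacle is this last quantitative matching: the multiplicative rounding error $(1\pm\beta)^n$ incurred by using polynomial-size rationals in place of the exact exponentials $\eul^{-\alpha a_i}$ must be strictly smaller than the additive gap $h(\alpha^{-1}) - h(s) = \Theta(\alpha)$ that separates "yes" from "no" instances around the peak of $h$. The parameter $\beta$ in Lemma~\ref{lemHardnessProbabilities} has been calibrated precisely so that this margin is preserved while keeping all numbers expressible in polynomially many bits.
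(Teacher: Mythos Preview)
Your proposal is correct and follows essentially the same route as the paper: reduce to covers $\{b,\onevec\}$ with $b\in\{0,1\}^n$, derive the sandwich $(1-\beta)^n h(s)\le \sum_i a_i-\ev_{\mu_p}[c_B]\le (1+\beta)^n h(s)$, and then compare against the window $[(1-\beta)^{n+2},(1-\beta)^n]\cdot(\alpha\eul)^{-1}$ for $\sum_i a_i-\gamma$.

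The only substantive difference is how the gap $h(\alpha^{-1})-h(s)$ is quantified. You invoke a second-order Taylor estimate (or direct evaluation at $\alpha^{-1}\pm 1$) together with unimodality to get $h(s)\le (\alpha\eul)^{-1}(1-c\alpha^2)$ for an unspecified absolute constant $c$, whereas the paper proves a concrete version with $c=\tfrac14\eul^{-1}$ by integrating $h'$ over half of the unit interval adjacent to $\alpha^{-1}$. Your approach is lighter, but be aware that the final inequality $(1+\beta)^n(1-c\alpha^2)<(1-\beta)^{n+2}$ is not merely ``$n\beta=O(\alpha^2)$'': the constant $48$ in $\beta=\alpha^2/(48(n+1))$ is tuned to the specific value $c=\tfrac14\eul^{-1}$, so if you carry out the direct computation you must check that the $c$ you obtain is at least $\tfrac{1}{12}$ (which it is, since $(1+\alpha)\eul^{-\alpha}\le 1-\tfrac14\eul^{-1}\alpha^2$ for $\alpha\in(0,1]$). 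Also, a small slip in your closing paragraph: the relevant gap to beat the rounding error is the \emph{relative} gap $\Theta(\alpha^2)$, not the absolute gap $\Theta(\alpha)$; this is exactly why $\beta$ is taken of order $\alpha^2/n$ rather than $\alpha/n$.
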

\begin{proof}
	See Appendix~\ref{SecAppendixHardnessk=2}.
\end{proof}

Note that we have shown that the binary Pareto cover problem is weakly NP-hard for constant $k$.
We remark that, unless $P=NP$, the problem cannot be strongly NP-hard, as we derive an FPTAS in Section~\ref{secApproximation}.

\subsection{\#P-hardness}
\label{secHardnessGeneralK}

In the previous section we have seen that the binary Pareto cover problem is NP-hard, already for constant $k$.
The next natural question is whether the problem is in NP.
At first sight, a Pareto cover $B \subseteq \{0,1\}^n$ itself seems to be a canonical certificate for a ``yes`` instance.
However, with this choice, we should be able to compute the cost $\ev_{\mu_p}[c_B]$ efficiently.
Unfortunately, if $k$ is not part of the input, i.e., $B$ is not of constant size, computing $\ev_{\mu_p}[c_B]$ is hard.
More precisely, let us prove Proposition~\ref{propSharpPHard}, which states that, given a Pareto cover $B$ for the uniform distribution $\mu$ on $\{0,1\}^n$, the problem of computing $\ev_\mu[\onevec_B]$ is \#P-hard.

\begin{proof}[Proof of Proposition~\ref{propSharpPHard}]
	We use the fact that the problem of computing the number of vertex covers in a given undirected graph is \#P-hard~\cite{vadhan2001complexity}.
	Given a graph $G = (V,E)$, identify $V$ with $[n]$ and for every edge $e \in E$, let $b_e$ denote the characteristic vector of $V\setminus e$, the set of nodes that are not part of $e$.
	Let $\mu$ denote the uniform distribution on $\{0,1\}^n$, i.e., $\mu = \mu_p$ with $p = \frac{1}{2} \cdot\onevec$.
	Consider the Pareto cover $B \defn \{ b_e : e \in E \} \cup \{\onevec\}$.
	Setting $c = \onevec$, the cost of $B$ is equal to
	\begin{align*}
		\ev_\mu[c_B] & = (n-2) \cdot \mu(\{x \in \{0,1\}^n : x \le b_e \text{ for some } e \in E\}) \\
		& \quad \quad + n \cdot \mu(\{x \in \{0,1\}^n : x \not \le b_e \text{ for all } e \in E\}) \\
		& = (n-2) + 2 \cdot \mu(\{x \in \{0,1\}^n : x \not \le b_e \text{ for all } e \in E\}) \\
		& = (n-2) + \frac{|\{x \in \{0,1\}^n : x \not \le b_e \text{ for all } e \in E\}|}{2^{n-1}} \\
		& = (n-2) + \frac{|\{U \subseteq [n] : U \cap e \ne \emptyset \text{ for all } e \in E\}|}{2^{n-1}},
	\end{align*}
	and hence we see that $2^{n-1} \cdot (\ev_\mu[c_B] - (n-2))$ is the number of vertex covers in $G$.
\end{proof}

\subsection{Membership in NP for constant $k$}
\label{secDiscreteInNP}

For the binary Pareto cover problem, we have seen that computing the cost of a given Pareto cover $B$ is hard if $B$ can be of any size.
In this section, we show that the cost can be computed efficiently if $B$ is of constant size.
In fact, we prove that this is the case for a more general discrete version of our problem.

To this end, let us introduce the following notation.
Let $a = (a_0,\dots,a_{M+1})$ with $0 = a_0 < a_1 < \dots < a_M < a_{M+1} = 1$ and let $\mu_1,\dots,\mu_n$ be probability distributions on $\{a_0,\dots,a_{M+1}\}$.
We write $\mu = \prod_{i=1}^n \mu_i$ to denote the (discrete) probability distribution $\mu$ on $\{a_0,\dots,a_{M+1}\}^n$ given by $ \mu(x) = \prod_{i=1}^n \mu_{i}(\{x_i\}) $ for $x \in \{a_0,\dots,a_{M+1}\}^n$.
If $p = \left((p^i_j)_{j=0}^{M+1}\right)_{i=1}^n$ is such that $\mu_{i}(\{a_j\}) = p^i_j$, then we write $ \mu = \mu_{a,p} $.

We study the following problem.

\begin{definition}
	\label{defDiscretePareto}
	The decision variant of the \emph{discrete Pareto cover problem} is the following:
	Given $a = (a_0,\dots,a_{M+1})$ with $0 = a_0 < a_1 < \dots < a_M < a_{M+1} = 1$, $p = \left((p^i_j)_{j=0}^{M+1}\right)_{i=1}^n$ with $p^i_j \in [0,1]$ and $\sum_{j=0}^{M+1} p^i_j = 1$ for all $i$, $c \in \Q^n_{\ge 0}$, $\gamma \in \Q$, and $k \in \Z_{\ge 1}$, decide whether there is some Pareto cover $B$ of $\mu_{a,p}$ such that $|B| = k$ and $\ev_{\mu_p}[c_B] \le \gamma$.
\end{definition}

Again, we assume that $a$, $p$, $c$, and $\gamma$ are given by their binary encodings.
In our applications, $k$ will be always constant.

Note that it is easy to check whether a finite set $B \subseteq [0,1]^n$ is feasible for the above problem, i.e., that it is a Pareto cover of $\mu_{a,b}$.
In fact, let $x^* \in [0,1]^n$ be given by $x^*_i \defn \max \{a_j : p^i_j > 0 \}$ for $i \in [n]$.
Then $B$ is a Pareto cover of $\mu_{a,b}$ if and only if $|B| = k$ and $B$ contains at least one point that covers $x^*$.

Moreover, note that if $B$ is feasible for the above problem, then we may assume that $B \subseteq \{a_0,\dots,a_{M+1}\}^n$ holds since otherwise we may lower entries of points in $B$ without changing the set of points they cover and without increasing their cost.

\begin{proposition}
	\label{propDiscreteComputeCostPolytime}
	Let $k \in \Z_{\geq 1}$ be fixed.
	Given $a,p,c$ as in the discrete Pareto cover problem and a Pareto cover $B \subseteq \{a_0,\dots,a_{M+1}\}^n$ for $\mu = \mu_{a,p}$ with $|B| = k$, we can compute $\ev_\mu[c_B]$ in polynomial time.
\end{proposition}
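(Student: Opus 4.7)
The plan is to reduce the computation of $\ev_\mu[c_B]$ to that of at most $2^k$ downset probabilities of the form $\mu(\{x : x \le v\})$, each of which factors over coordinates by independence and is trivial to evaluate.

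Relabel so that $B = \{b^{(1)}, \dots, b^{(k)}\}$ with $c\t b^{(1)} \le \cdots \le c\t b^{(k)}$, and set $D_i \defn \{x \in \{a_0,\dots,a_{M+1}\}^n : x \le b^{(i)}\}$. Since $B$ is a Pareto cover, $\mu\bigl(\bigcup_{i=1}^k D_i\bigr) = 1$, so $\tau(x) \defn \min\{i : x \in D_i\}$ is defined $\mu$-almost everywhere and $c_B(x) = c\t b^{(\tau(x))}$. A standard telescoping identity then gives
\[
    \ev_\mu[c_B] \;=\; c\t b^{(k)} \;-\; \sum_{j=1}^{k-1}\bigl(c\t b^{(j+1)} - c\t b^{(j)}\bigr)\,\mu\Bigl(\bigcup_{i=1}^{j} D_i\Bigr),
\]
so it suffices to evaluate each $\mu\bigl(\bigcup_{i=1}^j D_i\bigr)$. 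I would apply inclusion-exclusion to obtain
\[
    \mu\Bigl(\bigcup_{i=1}^{j} D_i\Bigr) \;=\; \sum_{\emptyset \ne S \subseteq [j]} (-1)^{|S|+1}\,\mu\Bigl(\bigcap_{i \in S} D_i\Bigr),
\]
and observe that $\bigcap_{i \in S} D_i = \{x : x \le v^S\}$, where $v^S_\ell \defn \min_{i \in S} b^{(i)}_\ell \in \{a_0,\dots,a_{M+1}\}$. By the product structure of $\mu$,
\[
    \mu(\{x : x \le v^S\}) \;=\; \prod_{\ell=1}^{n} \sum_{j:\, a_j \le v^S_\ell} p^\ell_j,
\]
which is computable in polynomial time from the binary-encoded inputs.

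In total there are at most $\sum_{j=1}^{k-1}(2^j - 1) < 2^k$ subsets $S$ to enumerate, and each downset probability costs $O(nM)$ rational arithmetic operations on numbers of polynomial bit-length. For fixed $k$ this is polynomial in the input size. There is essentially no technical obstacle here; the argument relies only on the fact that we can afford the $2^k$ inclusion-exclusion terms when $k$ is constant, which is consistent with the \#P-hardness of the unrestricted version (Proposition~\ref{propSharpPHard}).
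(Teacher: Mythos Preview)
Your argument is correct. The telescoping identity, inclusion--exclusion, and product factorization are all sound, and the complexity analysis is accurate for fixed $k$.

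Your route, however, differs from the paper's. The paper does not use inclusion--exclusion; instead it introduces the ``type'' $J(x) = \{j \in [k] : x \le b^j\}$ and writes $\ev_\mu[c_B] = \sum_{\emptyset \ne J \subseteq [k]} \mu(\{x : J(x)=J\})\cdot \min_{j\in J} c\t b^j$ (Lemma~\ref{lemObjectiveWithJs}), then computes the $2^k$ probabilities $\mu(\{x : J(x)=J\})$ by a coordinate-by-coordinate dynamic program (Lemmata~\ref{LemInductionStart} and~\ref{LemInductionStep}) that tracks the partial types $J^i(x)$. Your approach is more elementary and arguably cleaner for this proposition in isolation: sort, telescope, invoke inclusion--exclusion on downsets, and exploit that each intersection of downsets is again a downset with a product-form measure. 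The paper's DP is slightly heavier machinery here, but it is precisely what is reused in Section~\ref{SecRounding} to build the FPTAS: the incremental structure of the recursion allows rounding the state $(P_J, C_j)$ after each coordinate, which is the key to bounding the table size. Your inclusion--exclusion identity does not obviously support such per-coordinate rounding, so while it proves Proposition~\ref{propDiscreteComputeCostPolytime} more directly, it would not immediately yield the later algorithm.
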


Note that this shows that the discrete Pareto cover problem (and hence also the binary Pareto cover problem) is in NP if $k$ is constant.

In order to prove Proposition~\ref{propDiscreteComputeCostPolytime}, we make use of the following notation.
For vectors $b^1,\dots,b^k \in [0,1]^n$ and a vector $x \in [0,1]^n$ we define
\[
	J^i(x) \defn \left \{ j \in [k] : x_1 \le b^j_1, \dots, x_i \le b^j_i \right \}
\]
for $i \in [n]$, and set $J(x) \defn J^n(x)$.
Whenever we refer to $J(x),J^1(x),\dots,J^n(x)$, the vectors $b^1,\dots,b^k$ will be clear from the context.
Observe that for $J \subseteq [n]$, the set $\{x \in [0,1]^n : J(x) = J\}$ is a Borel set.

Note that $\{b^1,\dots,b^k\}$ is a Pareto cover for $\mu$ if and only if $\mu(\{x \in [0,1]^n : J(x) = \emptyset\}) = 0$.
Let us rephrase the cost of a Pareto cover using this new notation:

\begin{lemma}
	\label{lemObjectiveWithJs}
	Let $\mu$ be a probability measure on (the Borel $\sigma$-algebra on) $[0,1]^n$ and let $B = \{b^1,\dots,b^k\}$ be a Pareto cover of $\mu$.
	Then for every $c \in \R^n$ we have
	\[
		\ev_\mu[c_B] = \sum_{\emptyset \ne J \subseteq [k]} \mu(\{x \in [0,1]^n : J(x) = J\}) \cdot \min_{j \in J} c\t b^j.
	\]
\end{lemma}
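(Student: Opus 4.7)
The plan is to use the family $\{\{x \in [0,1]^n : J(x) = J\}\}_{J \subseteq [k]}$ as a measurable partition of $[0,1]^n$ and express $c_B$ as a simple function on this partition, then integrate.

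First, I would observe that for each $J \subseteq [k]$ the set $\{x \in [0,1]^n : J(x) = J\}$ is Borel (as noted just before the lemma), and these sets are pairwise disjoint with union $[0,1]^n$. Hence they form a finite Borel partition of $[0,1]^n$, so any bounded Borel function can be integrated piece by piece.

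Next I would unwind the definition of $c_B(x)$: by definition, $c_B(x) = \min\{c\t b^j : b^j \in B,\ x \le b^j\} = \min_{j \in J(x)} c\t b^j$ whenever $J(x) \ne \emptyset$, and $c_B(x) = \infty$ otherwise. Since $B$ is a Pareto cover of $\mu$, we have $\mu(\{x : J(x) = \emptyset\}) = 0$, so the set on which $c_B$ is infinite is $\mu$-null and can be discarded. On each piece $\{x : J(x) = J\}$ with $\emptyset \ne J \subseteq [k]$, the function $c_B$ takes the constant value $\min_{j \in J} c\t b^j$ (which depends only on $J$ and the fixed vectors $b^1,\dots,b^k$, not on $x$).

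Finally, I would apply additivity of the Lebesgue integral with respect to the finite partition:
\[
\ev_\mu[c_B] = \sum_{J \subseteq [k]} \int_{\{x : J(x) = J\}} c_B(x)\, d\mu(x),
\]
drop the $J = \emptyset$ term (its integrand is irrelevant on a $\mu$-null set), and replace $c_B$ by its constant value on each remaining piece to obtain the claimed formula. There is no real obstacle here: the only subtlety is making sure the $J = \emptyset$ piece is handled via the Pareto-cover assumption, which avoids the appearance of $\infty \cdot 0$ in the sum.
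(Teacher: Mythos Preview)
Your proposal is correct and follows essentially the same approach as the paper: observe that $c_B$ is constant equal to $\min_{j\in J} c^\intercal b^j$ on each piece $\{x:J(x)=J\}$, note that these pieces are disjoint, and integrate. The paper's proof is simply a terser version of what you wrote; your only addition is the explicit handling of the $J=\emptyset$ piece via the Pareto-cover assumption, which the paper leaves implicit.
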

\begin{proof}
	For $J \subseteq [k]$, note that $c_B(x) = \min_{j \in J} c\t b^j$ holds for all $x \in [0,1]^n$ with $J(x) = J$.
	The claim follows since the sets $(\{x \in [0,1]^n : J(x) = J\})_{J \subseteq [k]}$ are disjoint.
\end{proof}

Thus, in order to prove Proposition~\ref{propDiscreteComputeCostPolytime}, it suffices to show that we can compute the values $\mu_{a,p}(\{x\in [0,1]^n : J(x)=J\})$ for all $J \subseteq [k]$ in polynomial time.
To this end, we show how to iteratively compute the values $\mu_{a,p}(\{x\in [0,1]^n: J^i(x)=J\})$ for all $J \subseteq [k]$ and $i \in [n]$.
Lemma~\ref{LemInductionStart} takes care of the base case $i=1$, whereas Lemma~\ref{LemInductionStep} explains how to proceed from $i$ to $i+1$.

\begin{lemma}
	\label{LemInductionStart}
	Let $\mu_1,\dots,\mu_n$ be probability measures on $[0,1]$ and let $b^1,\dots,b^k \in [0,1]^n$.
	For $\mu = \prod_{i=1}^n \mu_i$ and $J \subseteq [k]$ we have
	\[
		\mu(\{x \in [0,1]^n : J^1(x) = J\}) = \mu_1\left( (\alpha,\beta] \cap [0,1] \right),
	\]
	where $\alpha = \max_{j \in [k] \setminus J} b_1^j$ and $\beta = \min_{j \in J} b_1^j$.
\end{lemma}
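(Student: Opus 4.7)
The plan is very short, since the first-coordinate case has essentially no interaction between the $n$ variables. First I would observe that, by the definition $J^1(x) = \{j \in [k] : x_1 \le b_1^j\}$, the value of $J^1(x)$ depends only on the first coordinate of $x$. Consequently, the event $\{x \in [0,1]^n : J^1(x) = J\}$ is a rectangle of the form $S \times [0,1]^{n-1}$ for some Borel set $S \subseteq [0,1]$.

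Next I would identify $S$ explicitly. The condition $J^1(x) = J$ amounts to requiring $x_1 \le b_1^j$ for every $j \in J$, i.e., $x_1 \le \min_{j \in J} b_1^j = \beta$, and simultaneously $x_1 > b_1^j$ for every $j \in [k]\setminus J$, i.e., $x_1 > \max_{j \in [k]\setminus J} b_1^j = \alpha$. Hence $S = \{x_1 \in [0,1] : \alpha < x_1 \le \beta\} = (\alpha,\beta] \cap [0,1]$. Here I adopt the standard conventions $\max \emptyset = -\infty$ and $\min \emptyset = +\infty$ so that the boundary cases $J = \emptyset$ (which gives $(\max_{j}b_1^j, 1]$) and $J = [k]$ (which gives $[0,\min_j b_1^j]$) are covered uniformly.

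Finally I would invoke the product structure of $\mu$: since $\mu = \prod_{i=1}^n \mu_i$ and each $\mu_i$ is a probability measure,
\[
	\mu(S \times [0,1]^{n-1}) = \mu_1(S) \cdot \prod_{i=2}^n \mu_i([0,1]) = \mu_1((\alpha,\beta]\cap[0,1]),
\]
which is the desired identity. The whole argument is a direct unwinding of definitions; the only place where one needs to be careful is the treatment of the degenerate ranges for $\alpha$ and $\beta$ when $J$ is empty or all of $[k]$, and the case $\alpha \ge \beta$, where both sides trivially vanish because $(\alpha,\beta]$ is empty and $J^1(x) = J$ is infeasible.
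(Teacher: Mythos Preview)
Your proposal is correct and follows essentially the same approach as the paper: both arguments reduce the condition $J^1(x)=J$ to the inequalities $x_1\le b_1^j$ for $j\in J$ and $x_1>b_1^j$ for $j\in[k]\setminus J$, identify the resulting set as $\{x:x_1\in(\alpha,\beta]\}$, and then (implicitly in the paper, explicitly in your write-up) use the product structure of $\mu$. Your version is simply more explicit about the cylinder decomposition and the degenerate cases.
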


Here, we use the convention $\max \emptyset \defn -\infty$ and $\min \emptyset \defn +\infty$.

\begin{proof}[Proof of Lemma~\ref{LemInductionStart}]
	We have $J^1(x)=J$ if and only if $x_1\leq b_1^j$ for all $j \in J$ and $x_1 > b_1^j$ for all $j\in [k]\setminus J$.
	That is, $J^1(x) = \{ x \in [0,1]^n : x_1 \in (\alpha,\beta] \}$.
\end{proof}

\begin{lemma}
	\label{LemInductionStep}
	Let $\mu_1,\dots,\mu_n$ be probability measures on $[0,1]$ and let $b^1,\dots,b^k \in [0,1]^n$.
	For $\mu = \prod_{i=1}^n \mu_i$, $J \subseteq [k]$, and $i \in \{2,\dots,n\}$ we have
	\[
		\mu(\{x \in [0,1]^n : J^i(x) = J\})
		= \sum_{J \subseteq L \subseteq [k]} \mu(\{x \in [0,1]^n : J^{i-1}(x) = L\})\cdot \mu_i((\alpha_L,\beta_L] \cap [0,1]),
	\]
	where $\alpha_L = \max_{j \in L \setminus J} b_i^j$ and $\beta_L = \min_{j \in J} b_i^j$.
\end{lemma}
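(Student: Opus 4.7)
The plan is to exploit the obvious monotonicity $J^i(x) \subseteq J^{i-1}(x)$ (one additional coordinate can only remove indices) together with the independence of coordinates in the product measure $\mu$. This lets us condition on the value of $J^{i-1}$ and then use that the extra constraint only involves the $i$-th coordinate.

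First, I would partition the event $\{x \in [0,1]^n : J^i(x) = J\}$ according to which superset $L \supseteq J$ of $J$ is attained as $J^{i-1}(x)$. Because $J^i(x) \subseteq J^{i-1}(x)$, every $x$ with $J^i(x) = J$ must satisfy $J^{i-1}(x) = L$ for some $L \supseteq J$, and these events are pairwise disjoint in $L$. Thus
\[
\mu(\{x : J^i(x) = J\}) = \sum_{J \subseteq L \subseteq [k]} \mu(\{x : J^{i-1}(x) = L \text{ and } J^i(x) = J\}).
\]

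Next I would analyze a single summand. Fixing $L \supseteq J$, observe that on the event $\{J^{i-1}(x) = L\}$ the $i$-th coordinate is still free, and the additional requirement $J^i(x) = J$ reduces precisely to: $x_i \le b_i^j$ for all $j \in J$ and $x_i > b_i^j$ for all $j \in L \setminus J$ (indices in $[k]\setminus L$ are already excluded at stage $i-1$ regardless of $x_i$). In the notation of the lemma this is exactly $x_i \in (\alpha_L,\beta_L]\cap[0,1]$. Crucially, the event $\{J^{i-1}(x) = L\}$ depends only on $x_1,\ldots,x_{i-1}$, while the event $\{x_i \in (\alpha_L,\beta_L]\cap[0,1]\}$ depends only on $x_i$.

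Because $\mu = \prod_{i=1}^n \mu_i$ is a product measure, these two events are independent, so their joint probability factors as
\[
\mu(\{x : J^{i-1}(x) = L\}) \cdot \mu_i((\alpha_L,\beta_L]\cap[0,1]).
\]
Summing over $L \supseteq J$ gives the claimed formula. The only mildly delicate point is the bookkeeping that, for $j \in [k]\setminus L$, the condition $j \notin J^i(x)$ is automatic from $j \notin J^{i-1}(x)$, so no constraint on $x_i$ comes from those indices; once this is observed, the proof is essentially a direct application of Fubini on the product measure.
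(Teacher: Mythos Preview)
Your proposal is correct and follows essentially the same approach as the paper: both decompose $\{J^i(x)=J\}$ as a disjoint union over supersets $L\supseteq J$ of the events $\{J^{i-1}(x)=L\}\cap\{x_i\in(\alpha_L,\beta_L]\cap[0,1]\}$, then use the product structure of $\mu$ to factor. Your write-up is in fact a bit more explicit than the paper's about the independence step and the bookkeeping for indices in $[k]\setminus L$.
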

\begin{proof}
	The claim follows from the fact that $\{x \in [0,1]^n : J^i(x) = J\}$ is equal to
	\begin{align*}
		\bigcup_{J \subseteq L \subseteq [k]} \Big[ & \{x \in [0,1]^n : J^{i-1}(x) = L\} \\
		& \quad \cap \{ x \in [0,1]^n : x_i \le b_i^j \text{ for all } j \in J, \, x_i > b_i^j \text{ for all } j \in L \setminus J \} \Big]
	\end{align*}
	and the observation that the above sets are disjoint.
\end{proof}
Note that for measures $\mu = \mu_{a,p} = \prod_{i=1}^n \mu_i$ and $\alpha,\beta \in \Q \cup \{\pm \infty\}$, we can compute $\mu_i((\alpha,\beta] \cap [0,1])$ in polynomial time.
Moreover, for constant $k$, the sum in Lemma~\ref{LemInductionStep} only has a constant number of summands.
This yields Proposition~\ref{propDiscreteComputeCostPolytime}.

\section{Approximation algorithm}
\label{secApproximation}

In this part, we present an FPTAS (see~\cite{ausiello1999complexity}) for the Pareto cover problem with general product measures that satisfy some mild assumptions and constant $k$, see Theorem~\ref{thmFPTAS}.
More precisely, we provide an algorithm that receives an instance $I$ and a parameter $\gamma\in(0,1)\cap\mathbb{Q}$, and computes a $(1+\gamma)$-approximate solution to $I$ in time polynomial in $\gamma^{-1}$ and the encoding length of $I$.
Of course, we must first explain what exactly we mean by an instance $I$ and how the probability measures are encoded, which is done in Section~\ref{secGeneralParetoCoverProblem}.

Our algorithm proceeds in two steps:
First, it reduces a general instance $I$ to an instance $I_\gamma$ of the discrete Pareto cover problem (see Definition~\ref{defDiscretePareto}) with the property that any $(1+\frac{\gamma}{15})$-approximate solution $B$ to $I_\gamma$ constitutes a $(1+\gamma)$-approximate solution for $I$. The details of the reduction can be found in Section~\ref{SecGeneralToDiscrete}.
In a second step, we run an FPTAS that is designed for the discrete case, with input $I_\gamma$ and $\frac{\gamma}{15}$, which is presented in Section~\ref{SecRounding}.

\subsection{The (general) Pareto cover problem}
\label{secGeneralParetoCoverProblem}

In order to specify the instances our FPTAS is designed for, let us introduce the following notation.
Here, whenever we say that $\mu$ is a probability measure on $[0,1]^n$ we mean that $\mu:\mathfrak{B}([0,1]^n)\rightarrow[0,1]$ is a probability measure, where $\mathfrak{B}([0,1]^m)$ denotes the Borel $\sigma$-algebra on $[0,1]^m$. Given a probability measure $\mu$ on $[0,1]^n$, we say that a random variable $X$ is \emph{distributed according to $\mu$} and write $X \sim \mu$ if for any measurable set $S\subseteq [0,1]^n$, the probability that $X\in S$ equals $\mu(S)$. Finally, for probability measures $\mu_1,\dots,\mu_n$ on $[0,1]$ we define the product measure $\mu = \prod_{i=1}^n\mu_i:[0,1]^n\rightarrow[0,1]$ that is given by $\mu(I_1 \times \dots \times I_n)=\prod_{i=1}^n \mu_i(I_i)$, where $I_1,\dots,I_n \subseteq [0,1]$ are intervals.

We are ready to define the type of instances that our algorithm can process.

\begin{definition}[nice probability measure]
	We call a probability measure $\mu$ on $[0,1]$ \emph{nice}, if there is an oracle $\sigma:\{(a,b)\in\mathbb{Q}^2: -1\leq a < b \leq 1\}\times(0,1)\rightarrow \mathbb{Q}_{\geq 0}$ that, given three rational inputs $a<b\leq 1$ and $\delta\in(0,1)$, computes a rational number $\sigma(a,b,\delta)$ with \[(1+\delta)^{-1}\cdot\mu((a,b]\cap [0,1])\leq \sigma(a,b,\delta)\leq (1+\delta)\cdot\mu((a,b]\cap [0,1])\]
	in time polynomial in the encoding lengths of $a$, $b$ and $\delta$.
	\label{DefNicelyDistributed}
\end{definition}
Note that, as an implicit consequence of the bound on the computation time, we in particular require the encoding length of $\sigma(a,b,\delta)$ to be polynomial in the encoding lengths of $a$, $b$ and $\delta$.
Moreover, note that we deliberately do not request $a\geq 0$ in order to be able to query $\mu(\{0\})=\mu((-1,0]\cap[0,1])$.

\begin{definition}[Pareto cover problem]
	\label{defGeneralParetoCoveringProblem}
	The optimization variant of the \emph{(general) Pareto cover problem} is the following:
	Given oracles $\sigma_1,\dots,\sigma_n$ for nice probability measures $\mu_1,\dots,\mu_n$, $c \in \Q^n_{\ge 0}$, $k \in \Z_{\ge 1}$, and $\alpha \in (0,1) \cap \mathbb{Q}$ such that $\alpha \le \min_{i=1}^n \ev_{X_i \sim \mu_i}[X_i]$, find a Pareto cover $B$ of $\mu = \prod_{i=1}^n \mu_i$ with $|B| = k$ that minimizes $\ev_\mu[c_B]$.
\end{definition}

Given an instance $I$ of the Pareto cover problem, we refer to its binary encoding length by $\mathrm{size}(I)$.
To avoid confusion, we explicitly point out that $\mu_1,\dots,\mu_n$ are only part of the instance definition for the sake of the presentation and the analysis.
Any algorithm for the Pareto cover problem will only have access to them by querying the $\sigma_i$.
In particular, we define the encoding length of the oracles $\sigma_i$ as $0$.

Note that the above definition requires a positive lower bound $\alpha$ on the expected values $\ev_{X_i \sim \mu_i}[X_i]$, which is justified as follows. Consider an instance where $k=1$, $c\equiv 1$, and $\mu_i(\{2^{-2^n}\})=1$ for all $i$. The optimum solution is given by $b^1=2^{-2^n}\cdot\onevec$, which has an encoding length exponential in $n$, as does any polynomial factor approximation. Hence, we need some grasp of the order of magnitude of the values that are attained with high probability, to ensure that the encoding length of such a number is polynomial in the input size. We ensure this by requiring a lower bound on the expectations to be explicitly given.

\subsection{Reduction to discretized solutions}
\label{SecGeneralToDiscrete}

In this section, we want to reduce the general Pareto cover problem (see Definition~\ref{defGeneralParetoCoveringProblem}) to the (optimization version of the) discrete Pareto cover problem (see Definition~\ref{defDiscretePareto}). All proofs are deferred to Appendix~\ref{secAppendixGeneralToDiscrete}. Our goal is to prove the following statement:

\begin{restatable}{theorem}{TheoReduction}\label{TheoReduction}
Let an instance $I$ of the Pareto cover problem and a parameter $\gamma > 0$ be given. Then there is an instance $I_\gamma$ of the discrete Pareto cover problem with the property that any $(1+\frac{\gamma}{15})$-approximate solution to $I_\gamma$ constitutes a $(1+\gamma)$-approximate solution to $I$. Moreover, $I_\gamma$ can be computed from $I$ and $\gamma$ in time polynomial in $\gamma^{-1}$ and the encoding length of $I$. 
\end{restatable}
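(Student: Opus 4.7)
The plan is to build $I_\gamma$ by discretizing $[0,1]$ on a multiplicative grid $0=a_0<a_1<\cdots<a_{M+1}=1$ with common ratio $1+\delta$ from $a_1$ onwards and smallest positive grid point $a_1$ on the order of $\delta\alpha$. Here $\delta$ is a small constant multiple of $\gamma/n$ chosen so that the accumulated errors below collapse to a factor of at most $1+\gamma/15$, and $M=O(\delta^{-1}\log((\delta\alpha)^{-1}))$ stays polynomial in $\sz(I)$ and $\gamma^{-1}$. The discrete probabilities $p^i_j$ of $\mu_{a,p}$ are then obtained by calling each oracle $\sigma_i$ with precision $\delta$ on the intervals $(a_{j-1},a_j]$ and on $\{0\}$, and rescaling so that they sum to one. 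A preliminary but crucial estimate is
\[
	\mathrm{OPT}(I) \ge \ev_\mu[c\t X] \ge \alpha\,c\t\onevec,
\]
which holds because any covering vector $b$ satisfies $b\ge X$ coordinate-wise. This estimate will convert the additive losses from grid rounding into multiplicative losses of $\mathrm{OPT}(I)$.

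For the direction $\mathrm{OPT}(I_\gamma) \le (1+O(n\delta))\mathrm{OPT}(I)$, I take an optimum $B^*=\{b^{*1},\dots,b^{*k}\}$ of $I$ and round every coordinate of each $b^{*j}$ up to the smallest grid point that is at least $b^{*j}_i$, obtaining $B=\{b^1,\dots,b^k\}$. Then $B$ lies on the grid, is still a Pareto cover of $\mu$, and each $c\t b^j$ grows by at most a factor of $1+\delta$ plus the additive term $a_1\,c\t\onevec\le\delta\,\mathrm{OPT}(I)$; taking the minimum over a superset of covering vectors yields $\ev_\mu[c_B]\le(1+2\delta)\mathrm{OPT}(I)$. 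The central observation allowing the comparison of $\mu$ to $\mu_{a,q}$, where $q^i_j\defn\mu_i((a_{j-1},a_j])$ are the exact rounded probabilities, is that whenever all entries of $B$ lie on the grid we have $c_B(X)=c_B(\tilde X)$ almost surely, where $\tilde X$ denotes the coordinate-wise upper rounding of $X$ to the grid: $c_B$ is monotone, and a grid point $b$ satisfies $b\ge X$ iff $b\ge\tilde X$. Hence $\ev_\mu[c_B]=\ev_{\mu_{a,q}}[c_B]$, and swapping $q$ for the oracle output $p$ perturbs this by at most a factor $(1+\delta)^{O(n)}$ on either side, by the product structure of the measures.

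Assembling both directions, any $(1+\gamma/15)$-approximate solution $B'$ of $I_\gamma$ (which we may take to lie on the grid by the observation following Definition~\ref{defDiscretePareto}) is valid for $\mu$, since $p^i_j$ and $q^i_j$ have the same zero-set so the supports of $\mu_{a,p}$ and $\mu_{a,q}$ coincide, and covering $\tilde X$ implies covering $X\le\tilde X$. Moreover it satisfies
\[
	\ev_\mu[c_{B'}] = \ev_{\mu_{a,q}}[c_{B'}] \le (1+\delta)^{O(n)}\ev_{\mu_{a,p}}[c_{B'}] \le (1+\delta)^{O(n)}(1+\gamma/15)\mathrm{OPT}(I_\gamma) \le (1+\gamma)\mathrm{OPT}(I),
\]
once $\delta\in\Theta(\gamma/n)$ is small enough. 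The main obstacle will be the careful bookkeeping of the $(1+\delta)$-factors coming from four independent sources---the geometric grid spacing, the additive-to-multiplicative conversion via $\alpha\,c\t\onevec$, the per-coordinate error in the oracle approximations, and the approximation guarantee inherited from $I_\gamma$---together with verifying that $M$, the queries to the oracles, and the bit-lengths of the $p^i_j$ all stay polynomially bounded in $\sz(I)$ and $\gamma^{-1}$.
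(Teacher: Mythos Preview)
Your proposal is correct and follows essentially the same route as the paper: the multiplicative grid $Q(\epsilon)$ with $\epsilon\in\Theta(\gamma/n)$, the lower bound $\mathrm{OPT}(I)\ge\alpha\,c\t\onevec$ (Proposition~\ref{PropExpectationBoundsOpt}), the upward rounding of an optimal cover to the grid (Lemma~\ref{LemGoodDiscretizedSolution}), the observation that $c_B(X)=c_B(\tilde X)$ for grid-valued $B$ which lets one pass from $\mu$ to the exact discretized measure and then to the oracle-approximated one at a cost of $(1+\delta)^{O(n)}$ (Lemma~\ref{LemSimilarObjective}), and finally the feasibility argument via coincidence of zero-sets. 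The paper's concrete choice is $\epsilon=\gamma/(60n)$, yielding four factors of $1+\gamma/15$ whose product is below $1+\gamma$; your constants will work out the same way.
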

We do this in three steps: First, we show, that we can restrict ourselves to a discrete set of \emph{query coordinates} the coordinates of the elements of $B$ may originate from, provided we are willing to lose a factor of $(1+\frac{\gamma}{15})$ in the approximation guarantee. The number of these query coordinates will be polynomial in the size of $I$ and $\gamma^{-1}$. Next, we observe that once we know that the coordinates of the vectors in $B$ are selected from a discrete set, we do not need to know the precise distributions $(\mu_i)_{i=1}^n$ anymore, but only the probability with which a certain query coordinate is the minimum one large enough to cover coordinate $i$. This gives rise to discrete distributions, which we, however, cannot compute exactly as we only have approximate access to $(\mu_i)_{i=1}^n$. Hence, we take the probabilities the oracles provide and normalize them to obtain $I_\gamma$. Finally, we show that the costs of each solution with respect to $I$ and $I_\gamma$ only differ by $(1+\frac{\gamma}{15})$, which will conclude the proof.

We start by establishing some trivial, but useful lower bound on the objective value, which helps us to estimate how much we are allowed to ``lose'' during our reduction.
\begin{restatable}{proposition}{PropExpectationBoundsOpt} \label{PropExpectationBoundsOpt}
$\alpha\cdot\sum_{i=1}^n c_i\leq\ev_\mu[c_B].$
\end{restatable}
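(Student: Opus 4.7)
The plan is to exploit monotonicity: since $c\in\Q^n_{\ge 0}$ and every point $b\in B$ covering $x$ satisfies $b\ge x$ coordinatewise, we get the pointwise bound $c\t b\ge c\t x$, hence $c_B(x)\ge c\t x$ for every $x$ in the support of $\mu$. Taking expectations and using linearity together with the product structure of $\mu$ will then immediately yield the claim.

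In more detail, first I would fix an arbitrary $x\in[0,1]^n$ for which $c_B(x)$ is finite, i.e., an $x$ that is covered by at least one $b\in B$. For any such covering $b$, the inequalities $x_i\le b_i$ combined with $c_i\ge 0$ give $c_i x_i\le c_i b_i$, summing over $i$ yields $c\t x\le c\t b$, and taking the minimum over all covering $b\in B$ gives $c\t x\le c_B(x)$. Since $B$ is a Pareto cover, this inequality holds $\mu$-almost surely.

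Next I would integrate against $\mu$. Letting $X\sim\mu$, we have
\[
\ev_\mu[c_B] \;\ge\; \ev_\mu[c\t X] \;=\; \sum_{i=1}^n c_i\cdot\ev_{X_i\sim\mu_i}[X_i],
\]
where the equality uses linearity of expectation together with the fact that the $i$-th marginal of $\mu=\prod_{i=1}^n\mu_i$ is $\mu_i$. Applying the assumption $\ev_{X_i\sim\mu_i}[X_i]\ge\alpha$ together with $c_i\ge 0$ termwise, we conclude $\ev_\mu[c_B]\ge\alpha\sum_{i=1}^n c_i$, as desired.

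There is no real obstacle here; the statement is essentially a sanity-check bookkeeping lemma, and the only things to be a bit careful about are (i) handling the measure-zero set of $x$ not covered by $B$ when turning a pointwise inequality into an inequality in expectation, and (ii) invoking the product structure of $\mu$ to identify the $i$-th marginal with $\mu_i$ before applying the lower bound $\alpha$.
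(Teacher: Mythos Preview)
Your proof is correct and essentially identical to the paper's: both observe that $c_B(x)\ge c\t x$ pointwise (using $c\ge 0$ and the definition of covering), then integrate and apply linearity plus the product structure to get $\ev_\mu[c_B]\ge\sum_i c_i\,\ev_{\mu_i}[X_i]\ge\alpha\sum_i c_i$. One minor remark: you need not worry about the measure-zero uncovered set, since $c_B(x)=+\infty$ there by definition and the pointwise inequality holds trivially.
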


In order to reduce an instance of the generalized problem to a discrete instance, the main idea is to define a sufficiently dense set of \emph{query coordinates} and consider the discrete distributions we obtain from $(\mu_i)_{i=1}^n$ by ``shifting probability up to the next query coordinate''.
\begin{definition}[query coordinates]
	For $\epsilon\in(0,1)\cap\mathbb{Q}$ let $M = M_\epsilon \defn \lceil \log_{1+\epsilon} ((\epsilon\cdot\alpha)^{-1})\rceil$ and define the \emph{query coordinates $Q(\epsilon)$} to be the ascending sequence $q_0,\dots,q_{M+1}$ given by $q_0=0$, $q_i=\epsilon\cdot \alpha\cdot (1+\epsilon)^{i-1}$ for $i \in [M]$, and $q_{M+1}=1$.
	In a slight abuse of notation, we will write $x\in Q(\epsilon)$ as a shortcut for $x\in\{q_0,\dots,q_{M+1}\}$.\label{DefQueryPoints}
\end{definition}
\begin{restatable}{proposition}{PropSizeMeps} \label{PropSizeMeps}
$M_\epsilon\in\mathcal{O}(\log(\alpha^{-1})\cdot\epsilon^{-2})$.
\end{restatable}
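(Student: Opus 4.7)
The plan is a direct estimate based on change-of-base for logarithms. Write
\[
	M_\epsilon \;=\; \left\lceil \log_{1+\epsilon}\!\left(\tfrac{1}{\epsilon\alpha}\right) \right\rceil \;\le\; \frac{\ln((\epsilon\alpha)^{-1})}{\ln(1+\epsilon)} + 1 \;=\; \frac{\ln(\epsilon^{-1}) + \ln(\alpha^{-1})}{\ln(1+\epsilon)} + 1,
\]
so the task reduces to an upper bound on the numerator and a lower bound on the denominator.

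For the denominator, I would use the elementary inequality $\ln(1+\epsilon) \ge \epsilon/2$ for $\epsilon \in (0,1]$, which follows from comparing derivatives at $0$ and checking that $f(\epsilon) \defn \ln(1+\epsilon) - \epsilon/2$ is nondecreasing on $[0,1]$ (since $f'(\epsilon) = \frac{1-\epsilon}{2(1+\epsilon)} \ge 0$ there) with $f(0) = 0$. For the numerator, I would combine $\ln(\alpha^{-1})$ (which is already in the target bound) with $\ln(\epsilon^{-1}) \le \epsilon^{-1}$, a standard consequence of $\ln x \le x$ applied to $x = \epsilon^{-1}$.

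Plugging these into the displayed estimate yields
\[
	M_\epsilon \;\le\; \frac{2\epsilon^{-1} + 2\ln(\alpha^{-1})}{\epsilon} + 1 \;=\; 2\epsilon^{-2} + 2\ln(\alpha^{-1})\,\epsilon^{-1} + 1.
\]
Since $\epsilon \in (0,1)$ gives $\epsilon^{-1} \le \epsilon^{-2}$, this is $\mathcal{O}\!\left((1+\ln(\alpha^{-1}))\,\epsilon^{-2}\right)$, which matches the asserted $\mathcal{O}(\log(\alpha^{-1})\cdot\epsilon^{-2})$ in the regime $\alpha \le 1/\eul$ (the only interesting case, since otherwise $\alpha$ is bounded away from $0$ and $M_\epsilon = \mathcal{O}(\epsilon^{-2})$ is trivially absorbed). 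No real obstacle arises; the only point worth pausing over is the denominator bound $\ln(1+\epsilon) \ge \epsilon/2$, as the naive bound $\ln(1+\epsilon) \sim \epsilon$ would require justification on the whole interval $(0,1]$, whereas the factor-of-two relaxation makes the estimate elementary.
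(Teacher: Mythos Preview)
Your proposal is correct and follows essentially the same route as the paper: bound the ceiling by the fraction plus one, use $\ln(1+\epsilon)\ge \epsilon/2$ on the denominator (the paper derives this via $\int_1^{1+\epsilon}\frac{1}{x}\,dx\ge \epsilon/2$ rather than a derivative check), and use $\ln(\epsilon^{-1})\le \epsilon^{-1}$ together with $\epsilon^{-1}\le\epsilon^{-2}$ on the numerator. Your explicit remark about the regime $\alpha\le 1/\eul$ is a point the paper leaves implicit when absorbing the additive constant into $\mathcal{O}(\log(\alpha^{-1})\cdot\epsilon^{-2})$.
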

In particular, we see that $M_\epsilon$ is polynomial in $\sz(I)$ and $\epsilon^{-1}$.
\begin{definition}[$\epsilon$-discrete solution]
	We call a Pareto cover $B$ of $\mu$ \emph{$\epsilon$-discrete} if $b \in Q(\epsilon)^n$ for all $b \in B$.
\end{definition}
The following lemma shows that there exist $\epsilon$-discrete solutions that are close to optimum for the instance we start with.
\begin{restatable}{lemma}{LemGoodDiscretizedSolution} \label{LemGoodDiscretizedSolution}
If $0<\epsilon<\frac{\gamma}{30}$, then there exists an $\epsilon$-discrete solution $B$ with $b_i > 0$ for all $b \in B$ and $i \in [n]$ of cost at most $(1+\frac{\gamma}{15})\cdot \mathrm{OPT}$.
\end{restatable}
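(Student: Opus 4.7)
The plan is to start with an optimal solution $B^*=\{b^{*1},\dots,b^{*k}\}$ of cost $\mathrm{OPT}$ and round each coordinate of each $b^{*j}$ up to the nearest query coordinate, taking care to enforce positivity by using $q_1=\epsilon\alpha$ instead of $q_0=0$ whenever the naive rounding would produce zero. Concretely, I would define $\tilde{b}^j_i\defn\min\{q_\ell\in Q(\epsilon):q_\ell\ge\max(b^{*j}_i,q_1)\}$ and set $\tilde B\defn\{\tilde{b}^1,\dots,\tilde{b}^k\}$. Since $\tilde{b}^j\ge b^{*j}$ coordinate-wise, every point covered by $B^*$ is also covered by some element of $\tilde B$, so $\tilde B$ is a Pareto cover of $\mu$; and $\tilde{b}^j_i\ge q_1>0$ holds by construction, establishing the positivity requirement.

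To bound the cost, I would estimate $\tilde{b}^j_i$ coordinate-wise. For $b^{*j}_i>\epsilon\alpha$, the geometric spacing of the query coordinates yields $\tilde{b}^j_i\le(1+\epsilon)b^{*j}_i$; the only non-trivial case here is the jump from $q_M$ to $q_{M+1}=1$, which is handled by the choice $M=\lceil\log_{1+\epsilon}((\epsilon\alpha)^{-1})\rceil$, since this ensures $1\le(1+\epsilon)q_M$. For $b^{*j}_i\le\epsilon\alpha$, one has $\tilde{b}^j_i=q_1=\epsilon\alpha$ by definition. Combining both cases and summing over $i$ gives
\[
c\t\tilde{b}^j\le(1+\epsilon)\,c\t b^{*j}+\epsilon\alpha\sum_{i=1}^n c_i.
\]
Because $x\le b^{*j}$ implies $x\le\tilde{b}^j$, taking the minimum over a $j$ attaining $c_{B^*}(x)$ yields the pointwise inequality $c_{\tilde B}(x)\le(1+\epsilon)\,c_{B^*}(x)+\epsilon\alpha\sum_{i=1}^n c_i$ for $\mu$-almost every $x$.

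Taking expectation with respect to $\mu$ and invoking Proposition~\ref{PropExpectationBoundsOpt} to bound the additive term by $\epsilon\cdot\mathrm{OPT}$, I obtain $\ev_\mu[c_{\tilde B}]\le(1+2\epsilon)\,\mathrm{OPT}\le(1+\gamma/15)\,\mathrm{OPT}$, the last step using $\epsilon<\gamma/30$. The main obstacle is that a purely multiplicative rounding argument breaks down for coordinates $b^{*j}_i$ that are zero or very small, where we are forced to inflate $b^{*j}_i$ all the way up to $\epsilon\alpha$, producing an unavoidable additive error in $c\t\tilde b^j$. The key observation that rescues the argument is that this error is exactly of the form $\epsilon\alpha\sum_i c_i$, which is controlled by $\epsilon\cdot\mathrm{OPT}$ via Proposition~\ref{PropExpectationBoundsOpt} and can therefore be absorbed into the multiplicative approximation factor.
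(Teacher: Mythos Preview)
Your proposal is correct and follows essentially the same argument as the paper: round each coordinate of an optimal solution up to the next query coordinate, use the geometric spacing of $Q(\epsilon)$ together with the bound $\tilde b^j_i\le\max\{\epsilon\alpha,(1+\epsilon)b^{*j}_i\}$ to obtain $c\t\tilde b^j\le(1+\epsilon)c\t b^{*j}+\epsilon\alpha\sum_i c_i$, and then absorb the additive term via Proposition~\ref{PropExpectationBoundsOpt} to reach $(1+2\epsilon)\mathrm{OPT}$. If anything, you are slightly more careful than the paper in explicitly forcing the rounded coordinates to be at least $q_1$ (thereby securing the positivity clause in the statement) and in verifying that the final gap from $q_M$ to $q_{M+1}=1$ is still bounded by a factor $1+\epsilon$.
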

As we are only interested in approximate solutions to the Pareto cover problem, we can restrict ourselves to $\epsilon$-discrete solutions. But given $\epsilon > 0$ and an $\epsilon$-discrete solution, we are not longer interested in the precise coordinates of a random vector $x$ sampled according to $(\mu_i)_{i=1}^n$, but only in the smallest query coordinate $q\in Q(\epsilon)$ that is no less than a certain coordinate of $x$. In particular, we obtain the same costs if instead of $(\mu_i)_{i=1}^n$, we consider the discrete distributions $(\bar{\mu}_i)_{i=1}^n$ that correspond to first sampling a random vector according to $(\mu_i)_{i=1}^n$ and then rounding every coordinate up to the next element of $Q(\epsilon)$ (see Figure~\ref{FigDiscretizeDistribution}). However, we cannot actually determine $(\bar{\mu}_i)_{i=1}^n$ exactly since we only have approximate access to $(\mu_i)_{i=1}^n$ via the oracles $(\sigma_i)_{i=1}^n$. Hence, we have to settle for an approximation $(\tilde{\mu}_i)_{i=1}^n$ that we obtain by querying the oracles and normalizing the obtained probability values. The following lemma shows that going from $(\mu_i)_{i=1}^n$ to the discrete distributions $(\tilde{\mu}_i)_{i=1}^n$ does not change the cost of any $\epsilon$-discrete solution by much.
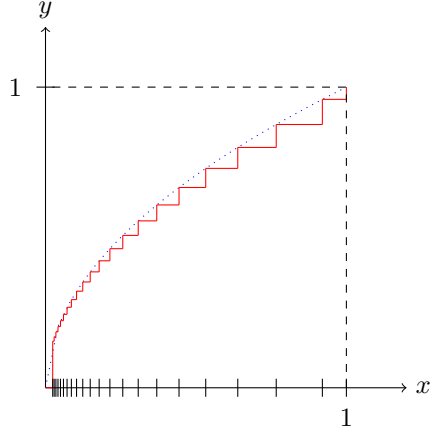
\begin{figure}[t]
\begin{center}
\begin{tikzpicture}[scale = 4]
\draw[->] (0,0) -- (1.2,0) node[right] {$x$};
\draw[->] (0,0) -- (0,1.2) node[above] {$y$};
\draw[domain=0:1, blue, dotted] plot ({\x}, {sqrt(\x)});
\draw[red] ({0},{0})--({0.02*1.2},{0});
\draw[red] ({0.02*1.2},{0})--({0.02*1.2},{sqrt(0.02*1.2)});
\foreach \x in {1,...,20}
{
	\draw[red] ({0.02*1.2^\x},{sqrt(0.02*1.2^\x)})--({0.02*1.2^(\x+1)},{sqrt(0.02*1.2^\x)});
	\draw[red] ({0.02*1.2^(\x+1)},{sqrt(0.02*1.2^\x)})--({0.02*1.2^(\x+1)},{sqrt(0.02*1.2^(\x+1))});
	\draw (0.02*1.2^\x,-0.03)--(0.02*1.2^\x,0.03);
}
\draw (0.02*1.2^21,-0.03)--(0.02*1.2^21,0.03);
\draw[red] ({0.02*1.2^21},{sqrt(0.02*1.2^21)})--({1},{sqrt(0.02*1.2^21)});
\draw[red] ({1},{sqrt(0.02*1.2^21)})--({1},{1});
\draw (-0.03,1)--(0.03,1);
\node at (-0.1,1) {$1$};
\draw (1,-0.03)--(1,0.03);
\node at (1,-0.1) {$1$};
\draw[dashed] (1,0)--(1,1)--(0,1);
\end{tikzpicture}
\end{center}
\caption{The functions $x\mapsto \mu_i([0,x])$ (blue, dotted) and $x\mapsto \bar{\mu}_i([0,x])$ (red).}\label{FigDiscretizeDistribution}
\end{figure}
\begin{restatable}{lemma}{LemSimilarObjective}  \label{LemSimilarObjective}
Let $\epsilon \defn \frac{\gamma}{60n}$ and let $q_0,\dots,q_{M_\eps+1}$ denote the query coordinates $Q(\epsilon)$.
Letting $q_{-1} \defn -1$, we define discrete probability measures $\tilde \mu_1,\dots,\tilde \mu_n$ by
\[
	\tilde{\mu}_i(\{q_\ell\}) = p^i_\ell \defn \frac{\sigma_i(q_{\ell-1},q_\ell,\epsilon)}{\sum_{t=0}^{M_\epsilon +1}\sigma_i(q_{t-1},q_t,\epsilon)}.
\]
Every $\epsilon$-discrete Pareto cover $B$ of $\prod_{i=1}^n \tilde{\mu}_i $ is also a Pareto cover of $\mu = \prod_{i=1}^n \mu_i$ and vice versa, and moreover, we have
\[\left(1+\tfrac{\gamma}{15}\right)^{-1}\cdot\ev_\mu[c_B]\leq \ev_{\prod_{i=1}^n \tilde{\mu}_i}[c_B]\leq \left(1+\tfrac{\gamma}{15}\right)\cdot\ev_\mu[c_B].\]
\end{restatable}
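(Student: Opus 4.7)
The plan is to interpose an intermediate measure $\bar\mu \defn \prod_{i=1}^n \bar\mu_i$, where $\bar\mu_i$ is the pushforward of $\mu_i$ under the ``round up to the next query coordinate'' map $r_\epsilon \colon [0,1] \to Q(\epsilon)$, $r_\epsilon(x) \defn \min\{q \in Q(\epsilon) : q \ge x\}$. Concretely, $\bar\mu_i(\{q_\ell\}) = \mu_i((q_{\ell-1},q_\ell] \cap [0,1])$ for $\ell \in \{0,1,\dots,M_\epsilon+1\}$, matching exactly the quantity that $\sigma_i(q_{\ell-1},q_\ell,\epsilon)$ approximates. I would then establish the claim in two stages: an \emph{exact} comparison between $\mu$ and $\bar\mu$ restricted to $\epsilon$-discrete covers, and an approximate comparison between $\bar\mu$ and $\tilde\mu$ arising from the oracle guarantee.

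For the first stage, the key observation is that for any $\epsilon$-discrete $B \subseteq Q(\epsilon)^n$ and any $x \in [0,1]^n$, one has $x \le b$ if and only if $r_\epsilon(x) \le b$: the forward direction uses that $b_i \in Q(\epsilon)$ together with $b_i \ge x_i$ forces $b_i \ge r_\epsilon(x_i)$ by minimality of $r_\epsilon(x_i)$, while the reverse direction is immediate from $r_\epsilon(x) \ge x$. Hence $J(x) = J(r_\epsilon(x))$ for every $x$, which gives $\mu(\{x : J(x) = J\}) = \bar\mu(\{\tilde x : J(\tilde x) = J\})$ for every $J \subseteq [k]$; applied with $J = \emptyset$ this yields the Pareto cover equivalence between $\mu$ and $\bar\mu$, and via Lemma~\ref{lemObjectiveWithJs} it gives $\ev_\mu[c_B] = \ev_{\bar\mu}[c_B]$.

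For the second stage, the oracle guarantee $(1+\epsilon)^{-1} \bar\mu_i(\{q_\ell\}) \le \sigma_i(q_{\ell-1}, q_\ell, \epsilon) \le (1+\epsilon) \bar\mu_i(\{q_\ell\})$, summed over $\ell$, yields $(1+\epsilon)^{-1} \le Z_i \defn \sum_t \sigma_i(q_{t-1}, q_t, \epsilon) \le 1+\epsilon$, so dividing gives the per-atom bound $(1+\epsilon)^{-2} \bar\mu_i(\{q_\ell\}) \le \tilde\mu_i(\{q_\ell\}) \le (1+\epsilon)^2 \bar\mu_i(\{q_\ell\})$. In particular, $\bar\mu_i$ and $\tilde\mu_i$ have the same support, and taking products over the $n$ coordinates gives $(1+\epsilon)^{-2n} \bar\mu(\{\tilde x\}) \le \tilde\mu(\{\tilde x\}) \le (1+\epsilon)^{2n} \bar\mu(\{\tilde x\})$ for every $\tilde x \in Q(\epsilon)^n$. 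The shared-support property transports the Pareto cover condition between $\bar\mu$ and $\tilde\mu$, completing the first claim of the lemma.

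For the cost estimate, I would sum the atomic bounds over $\{\tilde x \in Q(\epsilon)^n : J(\tilde x) = J\}$ for each $J \subseteq [k]$, obtaining $(1+\epsilon)^{-2n} \mu(\{x : J(x) = J\}) \le \tilde\mu(\{\tilde x : J(\tilde x) = J\}) \le (1+\epsilon)^{2n} \mu(\{x : J(x) = J\})$, and then invoke Lemma~\ref{lemObjectiveWithJs} together with the non-negativity of the weights $\min_{j \in J} c\t b^j$ (which follows from $c \ge 0$) to transfer these bounds to the expectations, giving $(1+\epsilon)^{-2n} \ev_\mu[c_B] \le \ev_{\tilde\mu}[c_B] \le (1+\epsilon)^{2n} \ev_\mu[c_B]$. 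Plugging in $\epsilon = \gamma/(60n)$, elementary estimates yield $(1+\epsilon)^{2n} \le e^{\gamma/30} \le 1 + \gamma/15$ via $e^x \le 1 + 2x$ on $[0,1]$, and symmetrically $(1+\epsilon)^{-2n} \ge (1+\gamma/15)^{-1}$. The main technical point, and the reason it is essential to introduce $\bar\mu$ rather than work directly with $\mu$ and $\tilde\mu$, is that the multiplicative error must remain bounded by $(1+\epsilon)^{2n}$ after summing over the (potentially exponentially many) atoms; this works precisely because the per-atom bounds are multiplicative and the weights $\min_{j\in J} c\t b^j$ are non-negative, so no cancellation or accumulation can occur.
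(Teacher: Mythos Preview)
Your proposal is correct and follows essentially the same route as the paper. The paper partitions $[0,1]^n$ into the product boxes $S_I=\bigtimes_i (q_{l_i-1},q_{l_i}]\cap[0,1]$, observes that $c_B$ is constant on each $S_I$ for an $\epsilon$-discrete $B$, and then applies the per-interval bound \eqref{EqMeasuresClose} coordinate-wise; your introduction of the pushforward $\bar\mu$ and use of Lemma~\ref{lemObjectiveWithJs} is simply a cleaner repackaging of the same argument (and indeed the paper itself sketches the $\bar\mu$ viewpoint in the prose before the lemma and in Figure~\ref{FigDiscretizeDistribution}).
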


With the above statement, we are ready to prove Theorem~\ref{TheoReduction}.

\begin{restatable}{corollary}{CorReductionFPTAS} \label{CorReductionFPTAS}
Given an FPTAS for the discrete Pareto cover problem, we obtain an FPTAS for the (general) Pareto cover problem by computing, given an instance $I$ for the general problem and $\gamma\in(0,1)\cap\mathbb{Q}$, the instance $I_\gamma$ and then applying the FPTAS to $I_\gamma$ with parameter $\frac{\gamma}{15}$. 
\end{restatable}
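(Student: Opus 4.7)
The plan is to chain Theorem~\ref{TheoReduction} with the hypothetical FPTAS for the discrete problem in the obvious way; the only real work is verifying that the composition runs in time polynomial in $\sz(I)$ and $\gamma^{-1}$, and that the approximation guarantees compose correctly.

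First I would describe the algorithm: given an instance $I$ of the (general) Pareto cover problem and $\gamma \in (0,1) \cap \Q$, invoke the construction from Theorem~\ref{TheoReduction} to compute the discrete instance $I_\gamma$, then run the assumed FPTAS for the discrete Pareto cover problem on input $I_\gamma$ with precision parameter $\gamma/15$, and output the resulting Pareto cover $B$.

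Next I would verify correctness. The discrete FPTAS produces a Pareto cover $B$ of the product measure $\prod_{i=1}^n \tilde\mu_i$ underlying $I_\gamma$ satisfying $\ev_{\prod_i \tilde\mu_i}[c_B] \le (1+\gamma/15) \cdot \mathrm{OPT}(I_\gamma)$. By Lemma~\ref{LemSimilarObjective}, every $\epsilon$-discrete Pareto cover (for $\epsilon = \gamma/(60n)$) of $\prod_i \tilde\mu_i$ is also a Pareto cover of $\mu = \prod_i \mu_i$ and vice versa, and the underlying costs differ by at most a factor of $1+\gamma/15$. Combined with Lemma~\ref{LemGoodDiscretizedSolution}, which guarantees that an $\epsilon$-discrete solution of cost at most $(1+\gamma/15)\cdot \mathrm{OPT}(I)$ exists, Theorem~\ref{TheoReduction} tells us precisely that $\ev_\mu[c_B] \le (1+\gamma/15)\cdot \ev_{\prod_i\tilde\mu_i}[c_B] \le (1+\gamma/15)^3 \cdot \mathrm{OPT}(I) \le (1+\gamma)\cdot \mathrm{OPT}(I)$, where the last inequality uses that $(1+\gamma/15)^3 \le 1+\gamma$ for $\gamma \in (0,1)$ (one checks $(1+\gamma/15)^3 \le 1 + 3\gamma/15 + 3(\gamma/15)^2 + (\gamma/15)^3 \le 1 + \gamma$).

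Finally I would verify the running time. Theorem~\ref{TheoReduction} guarantees that $I_\gamma$ is produced in time polynomial in $\gamma^{-1}$ and $\sz(I)$; in particular, $\sz(I_\gamma)$ is polynomial in $\gamma^{-1}$ and $\sz(I)$. Applying the discrete FPTAS with precision parameter $\gamma/15$ on input $I_\gamma$ then takes time polynomial in $\sz(I_\gamma)$ and $(\gamma/15)^{-1} = 15\gamma^{-1}$, which is polynomial in $\sz(I)$ and $\gamma^{-1}$. The overall algorithm is therefore an FPTAS. I do not anticipate any genuine obstacle here: all nontrivial content already sits inside Theorem~\ref{TheoReduction} and the assumed discrete FPTAS, and what remains is bookkeeping for the constant factor $15$ chosen precisely so that the accumulated loss stays below $1+\gamma$.
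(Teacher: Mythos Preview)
Your proposal is essentially the same as the paper's proof: invoke Theorem~\ref{TheoReduction} for correctness and observe that $\sz(I_\gamma)$ is polynomial in $\sz(I)$ and $\gamma^{-1}$ for the running-time bound. One small slip: in your chain you arrive at $(1+\gamma/15)^3\cdot\mathrm{OPT}(I)$, but if you actually unfold the lemmas you pick up four factors of $(1+\gamma/15)$ (one from Lemma~\ref{LemGoodDiscretizedSolution}, two from the two applications of Lemma~\ref{LemSimilarObjective}, one from the discrete FPTAS), exactly as in the paper's proof of Theorem~\ref{TheoReduction}; since you are already citing Theorem~\ref{TheoReduction} directly, you can simply drop the explicit power-counting and the issue disappears.
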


\subsection{FPTAS via rounding}
\label{SecRounding}

The goal of this section is to develop an FPTAS for the discrete Pareto cover problem.
All proofs are deferred to Appendix~\ref{secAppendixRounding}.
Let $a,p,c,k$ define an instance of the discrete Pareto cover problem, where $k$ is a constant.
For every $i \in [n]$ let $l_i\defn\max\{l: p^i_l >0\}$ and define $a^* \defn (a_{l_1},\dots,a_{l_n})$.
Recall that every Pareto cover $B$ of $\mu = \mu_{a,p}$ must contain a point that covers $a^*$.
Conversely, every finite set $B \subseteq [0,1]^n$ containing $a^*$ is a Pareto cover of $\mu$.
Since the costs are non-negative, we can restrict ourselves to Pareto covers that contain $a^*$.

Next, we discuss how to determine a cover of approximately minimum cost.
Recall that in Section~\ref{secDiscreteInNP}, Lemma~\ref{lemObjectiveWithJs}, given a Pareto cover $B = \{b^1,\dots,b^k\}$, we have seen that we can express our objective as
\[
	\ev_\mu[c_B] = \sum_{\emptyset\neq J\subseteq[k]} \mu(\{x\in [0,1]^n: J(x)=J\}) \cdot \min_{j\in J} c(b^j).
\]
Even more, we know that we can iteratively compute the values $\mu(\{x\in [0,1]^n: J^i(x)=J\})$ for $i\in[n]$ and $J \subseteq [k]$ in polynomial time, see Lemma~\ref{LemInductionStep}. In doing so, the only information we need to proceed from $i$ to $i+1$ are the probabilities $\mu(\{x\in [0,1]^n: J^i(x)=J\})$ for $J \subseteq [k]$ and the values $(b^j_{i+1})_{j=1}^k$, but \emph{no} further information on the values $b^j_l$ for $l\in [i]$ and $j\in[k]$. What is more, by definition, the values $\mu(\{x\in [0,1]^n: J^i(x)=J\})$ for $J \subseteq [k]$ and $\sum_{l=1}^i c_l\cdot b^j_l$ for $j \in [k]$ do not depend on the coordinates $b^j_l$, $l=i+1,\dots,k$, $j\in[k]$. All in all, these are best preconditions for a dynamic programming approach and motivate the following definition:

\begin{definition}
	A \emph{candidate} is a triple $(i,(P_J)_{J\subseteq [k]}, (C_j)_{j=1}^k)$.
	We call such a candidate \emph{valid} if there exists a Pareto cover $B = (b^j)_{j=1}^k$ with $b^k=a^*$ such that 
	\begin{align*}
		C_j & = \sum \nolimits_{l=1}^i c_l\cdot b^j_l \text{ for all } j \in [k] \text{ and} \\
		P_J & =\mu(\{x\in [0,1]^n: J^i(x)=J\}) \text{ for all } J \subseteq [k].
	\end{align*}
	We say that $B$ \emph{witnesses} the validity of the candidate.
	The \emph{cost} of a candidate is given by
	$\mathrm{Cost}(\mathcal{C}):=\sum_{\emptyset\neq J\subseteq [k]} P_J\cdot\min_{j\in J} C_j.$
\end{definition}

Note that the definition of the cost of a candidate is in accordance with Lemma~\ref{lemObjectiveWithJs}.

\begin{definition}
	For a Pareto cover $B=(b^j)_{j=1}^k$ with $b^k=a^*$ and $i \in [n]$ let
	\[
		\mathrm{Cand}^i(B) \defn (i,(P_J)_{J\subseteq [k]}, (C_j)_{j=1}^k),
	\]
	where $P_J=\mu(\{x\in [0,1]^n: J^i(x)=J\})$ for $J \subseteq [k]$ and $C_j =\sum_{l=1}^i c_l\cdot b^j_l$ for $j \in [k]$.\label{DefCandForCover}
\end{definition}
A naive approach to tackle the discrete Pareto cover problem would now be to iteratively enumerate all valid candidates for $i=1,\dots,n$, select a candidate for $i=n$ that yields the minimum objective value, and then back-trace to compute a corresponding cover. The problem with this idea is of course that we do not have a polynomial bound
on the number of candidates we generate. To overcome this issue, we round the candidates appropriately to ensure a polynomial number of possible configurations, whilst staying close enough to the original values to obtain a good approximation of the objective for $i=n$. Observe that for constant $k$, the number of entries of each candidate is constant, which means that it suffices to polynomially bound the number of values each of them may attain.

To this end, consider Algorithm~\ref{AlgDPValidCandsRounded}. The gray lines are not part of the algorithm itself, but only needed for its analysis. 
\begin{algorithm}
	\DontPrintSemicolon
	\KwIn{$(a_l)_{l=0}^{M+1}$, $((p^i_l)_{l=0}^{M+1})_{i=1}^n$, $(c_i)_{i=1}^n$, $k$, $\epsilon\in\mathbb{Q}\cap(0,1)$}
	\KwOut{a table $\mathcal{T}$ of rounded candidates}\;
	For $i=1,\dots,n$ compute $l_i:=\max\{l: p^i_l>0\}$.\;
	$a^*\gets (a_{l_i})_{i=1}^n$,	$A\gets\{a_0,\dots,a_{M+1}\}$,$\mathcal{T}\gets\emptyset$\;
	\textcolor{gray}{$\mathrm{AllWits}(-)\gets\emptyset$}\;
	$\delta\gets\frac{\epsilon}{4n}$\;
	\ForEach{$(\beta^j)_{j=1}^k\in A^k$ with $\beta^k=a^*_1$\label{LineForeachBeta1}}{
		Define $(P_J)_{J\subseteq [k]}$ by $P_J\gets\mu_1((\max_{j\in[k]\setminus J} \beta^j,\min_{j\in J} \beta^j]\cap [0,1])$\label{LineStartInner1}\;
		$P_J\gets\begin{cases}
		(1+\delta)^{\lfloor\log_{1+\delta}P_J\rfloor} &, P_J >0\\
		0 &, P_J=0
		\end{cases}$\;
		Define $(C_j)_{j=1}^k$ by $C_j\gets \begin{cases}
		(1+\delta)^{\lfloor\log_{1+\delta}(c_1\cdot \beta^j)\rfloor} &, c_1\cdot \beta^j >0\\
		0 &, c_1\cdot \beta^j=0
		\end{cases}$\;
		$\mathcal{T}\gets\mathcal{T}\cup\{(1,(P_J)_{J\subseteq [k]}, (C_j)_{j=1}^k)\}$\;
		$b^j_1\gets\beta^j$, $j=1,\dots,k$, $b^j_i\gets 0$, $i=2,\dots,n$, $j=1,\dots,k-1$, $b^k_i\gets a^*_i$, $i=2,\dots,n$\;
		$\mathrm{Witness}((1,(P_J)_{J\subseteq [k]}, (C_j)_{j=1}^k))\gets (b^j)_{j=1}^k$\label{LineEndInner1}\;
		\color{gray}$\mathrm{AllWits}((1,(P_J)_{J\subseteq [k]}, (C_j)_{j=1}^k))\gets \mathrm{AllWits}((1,(P_J)_{J\subseteq [k]}, (C_j)_{j=1}^k))\cup\{(b^j)_{j=1}^k\}$\;\color{black}}
	\For{$i=2$ \textbf{to} $n$}{
		\ForEach{$(i-1,(P^{i-1}_J)_{J\subseteq [k]},(C^{i-1}_j)_{j=1}^k)\in\mathcal{T}$\label{LineForLoopOldCands}}{
			\ForEach {$(\beta^j)_{j=1}^k\in A^k$ with $\beta^k=a^*_i$\label{LineForBeta2}}{
				Define $(P^i_J)_{J\subseteq [k]}$ by
				$P^i_J\gets \sum_{\substack{L:\\ J\subseteq L\subseteq[k]}}P^{i-1}_L\cdot\mu_i((\max_{j\in L\setminus J}\beta^j,\min_{j\in J} \beta^j]\cap [0,1])$\label{LineStartInner2}\;
				$P^i_J\gets  \begin{cases}
				(1+\delta)^{\lfloor\log_{1+\delta}P^i_J\rfloor} &, P^i_J >0\\
				0 &, P^i_J=0
				\end{cases}$\;
				Define $(C^i_j)_{j=1}^k$ by $C^i_j\gets C^{i-1}_j+c_i\cdot \beta^j$\;
				$C^i_j\gets \begin{cases}
				(1+\delta)^{\lfloor\log_{1+\delta} C^i_j\rfloor} &, C^i_j > 0\\
				0 &, C^i_j = 0
				\end{cases}$\;
				$\mathcal{T}\gets \mathcal{T}\cup\{(i,(P^i_J)_{J\subseteq[k]},(C^i_j)_{j=1}^k)\}$\;
				$(b^{i-1,j})_{j=1}^k\gets\mathrm{Witness}((i-1,(P^{i-1}_J)_{J\subseteq [k]},(C^{i-1}_j)_{j=1}^k))$\;
				Define $(b^{i,j})_{j=1}^k$ by $b^{i,j}_l:=\begin{cases} b^{i-1,j}_l &, l \neq i\\ \beta^j &, l=i\end{cases}$\;
				$\mathrm{Witness}((i,(P^i_J)_{J\subseteq[k]},(C^i_j)_{j=1}^k))\gets (b^{i,j})_{j=1}^k$\label{LineEndInner2}\;
				\color{gray}\ForEach{$(\tilde{b}^{i-1,j})_{j=1}^k\in \mathrm{AllWits}((i-1,(P^{i-1}_J)_{J\subseteq [k]}, (C^{i-1}_j)_{j=1}^k))$ \label{LineForLoopAllWits}}{
					Define $(\tilde{b}^{i,j})_{j=1}^k$ by $\tilde{b}^{i,j}_l:=\begin{cases} \tilde{b}^{i-1,j}_l &, l\neq i\\ \beta^j &, l=i\end{cases}$\;
					$\mathrm{AllWits}((i,(P^i_J)_{J\subseteq [k]}, (C^i_j)_{j=1}^k))\gets \mathrm{AllWits}((i,(P^i_J)_{ J\subseteq [k]}, (C^i_j)_{j=1}^k))\cup\{(\tilde{b}^{i,j})_{j=1}^k\}$\;}\color{black}
	}}}
	\textbf{return} $\mathcal{T}$\;
	\label{AlgDPValidCandsRounded}
	\caption{Dynamic program to compute rounded candidates}
\end{algorithm}
Before diving into the analysis of Algorithm~\ref{AlgDPValidCandsRounded}, we would like to provide some intuition about what is happening. We start by enumerating all possible values $(b^j_1)_{j=1}^k$ may attain in a solution $B$ with $b^k=a^*$ and use this information to compute $\mathrm{Cand}^1(B)$ according to Definition~\ref{DefCandForCover}. (Recall that this is independent of the values $b^j_l$ for $l\geq 2$, $j=1,\dots,k$.) Then, we round all non-zero entries of $\mathrm{Cand}^1(B)$ (except for the first one, which is $1$) down to the next power of $1+\delta$, $\delta=\frac{\epsilon}{4n}$. Each rounded candidate $\mathcal{C}$ is added to our table $\mathcal{T}$, and for back-tracing purposes, we store a cover $B$ that leads to $\mathcal{C}$ as $\mathrm{Witness}(\mathcal{C})$. For the analysis, we further maintain an imaginary map $\mathrm{AllWits}$ mapping each rounded candidate $\mathcal{C}\in\mathcal{T}$ to the set of all possible witness covers that result in $\mathcal{C}$ after (iterative) rounding.

After dealing with the base case $i=1$, we enumerate possible values of $(b^j_i)_{j=1}^k$ for $i=2,\dots,n$, loop over all rounded candidates for $i-1$ and compute new rounded candidates for $i$ according to Lemma~\ref{LemInductionStep}. Moreover, we deduce witnesses for our new candidates for $i$ from those stored for the candidates for $i-1$ and the values $(b^j_i)_{j=1}^k$. This might of course lead to an exponential growth of the size of the imaginary map $\mathrm{AllWits}$. However, the fact that the $\mathrm{Witness}$-map only memorizes one witness per candidate keeps the total running time under control, provided we can come up with a polynomial bound on the number of candidates we generate. Lemma~\ref{LemTPolynomial} takes care of this, and is the main ingredient of the proof of Theorem~\ref{TheoPolyTime}, which guarantees a polynomial running time.

\begin{restatable}{lemma}{LemTPolynomial} \label{LemTPolynomial}
At each point during the algorithm, we have $|\mathcal{T}| \le \alpha^{2^k} \cdot \beta^k \cdot n$, where
	\begin{align*}
		\alpha & = n + 2 -n \cdot \min \left\{\log_{1+\delta}(p^i_l) : i \in [n],\, l \in \{0,\dots,M+1\},\, p^i_l > 0\right\}, \\
		\beta & = n + 2 + \log_{1+\delta}(c_1 + \dots + c_n)-\log_{1+ \delta}(a_1)- \min \{ \log_{1+\delta} (c_i) : i \in [n]\}.
	\end{align*}
	In particular, for constant $k$, $|\mathcal{T}|$ is polynomially bounded in the encoding length of the given instance of the discrete Pareto cover problem and $\epsilon^{-1}$.
\end{restatable}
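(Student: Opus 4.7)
Each candidate in $\mathcal{T}$ is a triple $(i,(P_J)_{J \subseteq [k]},(C_j)_{j=1}^k)$, so I would bound $|\mathcal{T}|$ by counting the possibilities for each component and multiplying. The index $i \in [n]$ contributes a factor of $n$; the $2^k$ rounded probabilities $P_J$ contribute at most (\#values per $P_J$)$^{2^k}$; the $k$ rounded costs $C_j$ contribute at most (\#values per $C_j$)$^k$. After rounding, each $P_J$ and each $C_j$ is either $0$ or an integer power of $1+\delta$, so for each entry-type it suffices to bound the range of the attainable exponent.

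\textbf{Bounding the number of values of $P_J$.} Since $P_J \le 1$ throughout the algorithm, the exponent $r$ satisfies $r \le 0$. For a matching lower bound I would prove by induction on $i$ that every non-zero rounded $P_J$ produced at stage $i$ is at least $(p_{\min}/(1+\delta))^i$, where $p_{\min} \defn \min\{p^\ell_t : p^\ell_t > 0\}$. Base case (lines~\ref{LineStartInner1}--\ref{LineEndInner1}): the pre-rounding $P_J$ is a non-negative sum of values $p^1_t$, so if positive it is $\ge p_{\min}$; rounding down then loses at most a factor of $1+\delta$. Inductive step: the pre-rounding $P^i_J$ at line~\ref{LineStartInner2} is a non-negative sum of products $P^{i-1}_L \cdot \mu_i((\cdot,\cdot]\cap[0,1])$, so if positive, some summand is positive, and the two factors of that summand are bounded below by $(p_{\min}/(1+\delta))^{i-1}$ (induction hypothesis) and $p_{\min}$ (the second factor is a sum of $p^i_t$-values); one further rounding loses a final factor of $1+\delta$. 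The non-zero rounded values of $P_J$ therefore correspond to integer exponents in $[\,n\log_{1+\delta}(p_{\min}) - n,\,0\,]$, and counting these together with the value $0$ yields at most $\alpha$ possibilities.

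\textbf{Bounding the number of values of $C_j$ and combining.} The analysis of $C_j$ mirrors that of $P_J$. The bound $C_j \le c_1 + \dots + c_n$ is immediate from $b^j_\ell \le 1$. For the lower bound on non-zero values, any non-zero $C^i_j$ must contain a summand $c_\ell \cdot \beta^j$ with $c_\ell > 0$ and $\beta^j \ge a_1$, which is $\ge (\min_\ell c_\ell) \cdot a_1$; the at most $n$ successive roundings then cost a total factor of $(1+\delta)^n$. Counting integer exponents in the resulting interval and adding the value $0$ yields at most $\beta$ possibilities, so altogether $|\mathcal{T}| \le n \cdot \alpha^{2^k} \cdot \beta^k$. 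For the polynomial bound with constant $k$, I would use $\delta = \epsilon/(4n)$ together with $\ln(1+\delta) \ge \delta/2$ to conclude that $\log_{1+\delta}$ of any positive rational of encoding length $s$ is $O(ns/\epsilon)$; hence $\alpha$ and $\beta$ are polynomial in $\sz(I)$ and $\epsilon^{-1}$, and so are $\alpha^{2^k}$ and $\beta^k$ for constant $k$.

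\textbf{Main obstacle.} The only genuinely delicate point is the inductive lower bound on $P^i_J$: the argument requires that rounding a positive quantity never produces $0$ (so non-zeroness is preserved across iterations), that a positive sum of non-negative terms has at least one positive summand (so that the two inductive lower bounds can be applied simultaneously to that specific summand), and that $\mu_i$ evaluated on a half-open interval is itself a sum of the $p^i_t$ and hence at least $p_{\min}$ whenever non-zero. With these observations in place, the rest of the proof is routine accounting.
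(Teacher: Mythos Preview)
Your proposal is correct and follows the same high-level decomposition as the paper: bound $|\mathcal{T}|$ by the product of the number of possible values of $i$, of each $P_J$, and of each $C_j$, and bound the latter two by pinning down the range of admissible exponents of $1+\delta$.

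The one methodological difference is in how the exponent range is obtained. The paper factors the argument through Lemma~\ref{LemRatios}, which shows that for every $B\in\mathrm{AllWits}(\tilde{\mathcal{C}})$ the rounded entries of $\tilde{\mathcal{C}}$ lie within a multiplicative $(1+\delta)^i$ of the corresponding entries of the \emph{exact} candidate $\mathrm{Cand}^i(B)$; Propositions~\ref{PropPossibleCostValues} and~\ref{PropPossiblePropValues} then bound the exact values directly and transfer the bounds to the rounded ones. You instead prove the lower bound on the rounded $P_J$ (and analogously $C_j$) by a direct induction on $i$, tracking a single quantitative invariant $(p_{\min}/(1+\delta))^i$. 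Both arguments are the same induction at heart and yield the same numerical bounds; the paper's packaging has the advantage that Lemma~\ref{LemRatios} is reused verbatim for Lemma~\ref{LemCloseCosts} (the approximation-quality bound), whereas your self-contained argument is slightly leaner for this lemma alone but does not give that reusable comparison for free.
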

\begin{restatable}{theorem}{TheoPolyTime} \label{TheoPolyTime}
Given an instance $I$ of the discrete Pareto cover problem and a parameter $\epsilon\in(0,1)\cap\mathbb{Q}$ as input, Algorithm~\ref{AlgDPValidCandsRounded} runs in time polynomial in $\sz(I)$ and $\epsilon^{-1}$.
\end{restatable}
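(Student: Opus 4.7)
The plan is to decompose the running time as (total iteration count) $\times$ (work per iteration), and to show each factor is polynomial in $\sz(I)$ and $\epsilon^{-1}$. Algorithm~\ref{AlgDPValidCandsRounded} splits naturally into an initialization block iterating over $(\beta^j)_{j=1}^k \in A^k$ with $\beta^k = a^*_1$, followed by three nested loops over $i \in \{2,\dots,n\}$, over candidates in $\mathcal{T}$, and over $(\beta^j)_{j=1}^k \in A^k$ with $\beta^k = a^*_i$. Since $|A| = M+2$ is polynomial in $\sz(I)$ and $k$ is constant, the number of choices for the first $k-1$ coordinates of $\beta$ is $|A|^{k-1}$, still polynomial. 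Combined with Lemma~\ref{LemTPolynomial}, which bounds $|\mathcal{T}|$ polynomially in $\sz(I)$ and $\epsilon^{-1}$ throughout, the total number of innermost iterations is $O(n \cdot |\mathcal{T}| \cdot |A|^{k-1})$, which is polynomial.

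Next I would analyze the work performed per iteration. A rounded candidate consists of $2^k + k$ rational entries, each of which is either zero or a power of $1+\delta$ whose integer exponent lies in the polynomial range implicit in the proof of Lemma~\ref{LemTPolynomial}; hence each entry has polynomial encoding length. Computing a new $P^i_J$ requires summing at most $2^k$ products, each formed from a previously stored entry and a term $\mu_i((\alpha_L,\beta_L]\cap[0,1])$, which itself equals a sum of constantly many input probabilities $p^i_\ell$. For fixed $k$ this is a constant number of rational arithmetic operations on numbers of polynomial encoding length, producing a pre-rounding value of polynomial bit-length; the updates $C^i_j \gets C^{i-1}_j + c_i \cdot \beta^j$ are analogous. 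The rounding step $x \mapsto (1+\delta)^{\lfloor\log_{1+\delta} x\rfloor}$ can then be implemented by binary-searching for the correct integer exponent within the polynomial range from Lemma~\ref{LemTPolynomial}, using exact rational comparisons of $(1+\delta)^m$ against $x$; repeated squaring ensures each trial $(1+\delta)^m$ has polynomial encoding length. Finally, storing $\mathcal{T}$ and the $\mathrm{Witness}$ map in a balanced search tree keyed by the (polynomial-size) rounded candidate makes insertions and membership tests polynomial-time.

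The main obstacle I anticipate is the numerical bookkeeping: one must explicitly confirm both that the pre-rounding values do not blow up beyond polynomial bit-length (so that the sums and products, over constantly many terms for constant $k$, stay controlled) and that the integer exponent $\lfloor\log_{1+\delta} x\rfloor$ can be extracted exactly in polynomial time. Both points reduce to standard facts once one invokes the explicit exponent bounds from the proof of Lemma~\ref{LemTPolynomial} and notes that $1+\delta = 1 + \tfrac{\epsilon}{4n}$ and all arguments $x$ that appear are rationals of polynomial bit-length, so that the binary search terminates in a polynomial number of exact rational comparisons.
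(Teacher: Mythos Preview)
Your proposal is correct and follows essentially the same decomposition as the paper: bound the number of outer iterations via Lemma~\ref{LemTPolynomial} and the size of $A^k$, then argue that each inner update is polynomial. The only notable difference is implementational: the paper handles the rounding step $x\mapsto(1+\delta)^{\lfloor\log_{1+\delta}x\rfloor}$ by precomputing a polynomial-sized lookup table of the relevant powers of $1+\delta$, whereas you binary-search for the exponent; both work. One small inaccuracy: you write that $\mu_i((\alpha_L,\beta_L]\cap[0,1])$ is a sum of ``constantly many'' input probabilities $p^i_\ell$, but in fact it may involve up to $M+2$ summands; since $M+2$ is polynomial in $\sz(I)$ this does not affect your conclusion.
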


Denote the set of all candidate in $\mathcal{T}$ starting with $i$ by $\mathcal{T}_i$. In order to finally obtain an FPTAS for the discrete Pareto cover problem, our goal for the remainder of this section is to prove the following result:
\begin{restatable}{theorem}{TheoApproxGuarantee} \label{TheoApproxGuarantee}
Running Algorithm~\ref{AlgDPValidCandsRounded} and choosing the witness of a candidate of minimum cost in $\mathcal{T}_n$ yields a $(1+\epsilon)$-approximation.
\end{restatable}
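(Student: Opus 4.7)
The plan is a standard rounding/accuracy analysis. Fix any Pareto cover $B=(b^j)_{j=1}^k$ of $\mu$ with $b^k=a^*$ and $b^j\in\{a_0,\dots,a_{M+1}\}^n$ for every $j$, and denote by $\widetilde{C}^i_j(B) \defn \sum_{l=1}^i c_l b^j_l$ and $\widetilde{P}^i_J(B) \defn \mu(\{x\in [0,1]^n: J^i(x)=J\})$ the exact entries of $\mathrm{Cand}^i(B)$ from Definition~\ref{DefCandForCover}. Because the algorithm enumerates \emph{every} tuple $(\beta^j)_{j=1}^k\in A^k$ with $\beta^k=a^*_i$ in each outer iteration, a unique sequence $(\mathcal{C}^i(B))_{i=1}^n$ of rounded candidates in $\mathcal{T}$ is produced along with $B$, in the sense that $B\in\mathrm{AllWits}(\mathcal{C}^n(B))$ and each $\mathcal{C}^i(B)$ is obtained from $\mathcal{C}^{i-1}(B)$ by one step of Algorithm~\ref{AlgDPValidCandsRounded}; I write its entries as $(\widehat{P}^i_J)_J$ and $(\widehat{C}^i_j)_j$.

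The heart of the argument is the following multiplicative invariant, proved by induction on $i$:
\[
\frac{\widetilde{C}^i_j(B)}{(1+\delta)^i}\leq \widehat{C}^i_j\leq \widetilde{C}^i_j(B)\qquad\text{and}\qquad\frac{\widetilde{P}^i_J(B)}{(1+\delta)^i}\leq \widehat{P}^i_J\leq \widetilde{P}^i_J(B).
\]
The upper bounds are immediate, since rounding down can only decrease values and the updates for $P^i_J$ and $C^i_j$ are monotone in the previous entries. For the lower bounds one uses (a) that a single rounding step loses at most a factor $1+\delta$, and (b) that the update from Lemma~\ref{LemInductionStep} is \emph{linear} in $(\widehat{P}^{i-1}_L)_L$ with non-negative coefficients equal to the exact probabilities $\mu_i((\cdot,\cdot]\cap[0,1])$. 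Hence a uniform $(1+\delta)^{i-1}$ under-approximation on layer $i-1$ propagates linearly to a $(1+\delta)^{i-1}$ under-approximation of the unrounded value at layer $i$, and the subsequent rounding contributes at most one more factor. The cost entries are handled analogously, using $\widehat{C}^i_j\geq (\widehat{C}^{i-1}_j + c_i \beta^j_i)/(1+\delta)$.

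Combining the invariant with Lemma~\ref{lemObjectiveWithJs} at $i=n$ yields
\[
\frac{\ev_\mu[c_B]}{(1+\delta)^{2n}}\leq \mathrm{Cost}(\mathcal{C}^n(B))\leq \ev_\mu[c_B].
\]
Applied to an optimal Pareto cover $B^*$, which by the observations at the beginning of Section~\ref{SecRounding} may be assumed to satisfy $b^{*,k}=a^*$ and $B^*\subseteq\{a_0,\dots,a_{M+1}\}^n$, this gives $\mathrm{Cost}(\mathcal{C}^n(B^*))\leq\mathrm{OPT}$. If $\mathcal{C}_{\min}\in\mathcal{T}_n$ minimizes $\mathrm{Cost}$ and $B_{\min}\defn\mathrm{Witness}(\mathcal{C}_{\min})\in\mathrm{AllWits}(\mathcal{C}_{\min})$, then
\[
\ev_\mu[c_{B_{\min}}]\leq (1+\delta)^{2n}\cdot\mathrm{Cost}(\mathcal{C}_{\min})\leq (1+\delta)^{2n}\cdot\mathrm{Cost}(\mathcal{C}^n(B^*))\leq (1+\delta)^{2n}\cdot\mathrm{OPT},
\]
and a one-line elementary calculation shows $(1+\tfrac{\epsilon}{4n})^{2n}\leq e^{\epsilon/2}\leq 1+\epsilon$ for $\epsilon\in(0,1]$.

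The main obstacle is the lower-bound half of the invariant for $\widehat{P}^i_J$: the update aggregates contributions from all supersets $L\supseteq J$, so one must verify that rounding errors cannot interact destructively across layers. Linearity of the update in the previous probabilities, together with the fact that the algorithm uses the \emph{exact} values $\mu_i((\cdot,\cdot]\cap[0,1])$ as coefficients (only the $(\widehat{P}^{i-1}_L)_L$ are rounded), is precisely what makes the induction go through without any amplification beyond a uniform factor of $(1+\delta)^i$.
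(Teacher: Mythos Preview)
Your proposal is correct and follows essentially the same route as the paper: the multiplicative invariant you state by induction on $i$ is exactly Lemma~\ref{LemRatios}, the consequence $\ev_\mu[c_B]/(1+\delta)^{2n}\le\mathrm{Cost}(\mathcal{C}^n(B))\le\ev_\mu[c_B]$ is Lemma~\ref{LemCloseCosts}, the fact that every admissible $B$ (in particular $B^*$) appears as a witness is Proposition~\ref{PropAllSolutionsOccur}, and the step $B_{\min}=\mathrm{Witness}(\mathcal{C}_{\min})\in\mathrm{AllWits}(\mathcal{C}_{\min})$ is Proposition~\ref{PropWitnessAmongAllWits}. The only cosmetic differences are notation (you use $\widetilde{\cdot}$ for exact and $\widehat{\cdot}$ for rounded, the paper does the opposite) and that the paper packages the pieces as separate lemmas rather than a single linear argument; the final estimate $(1+\tfrac{\epsilon}{4n})^{2n}\le e^{\epsilon/2}\le 1+\epsilon$ is identical.
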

The proof of Theorem~\ref{TheoApproxGuarantee} consists of two main steps. Lemma~\ref{LemCloseCosts} shows that for any rounded candidate $\tilde{\mathcal{C}}$ we store in $\mathcal{T}$ invokes similar costs to those of any of its witness covers. Proposition~\ref{PropAllSolutionsOccur} ensures that every cover $B=(b^j)_{j=1}^k$ with $b^k=a^*$ occurs as a possible witness for some candidate. In particular, this holds for an optimum cover $B^*$ (with $b^{*,k}=a^*$) and by Lemma~\ref{LemCloseCosts}, we can therefore infer that the costs of the solution we return can only be by a factor of $1+\epsilon$ larger than the optimum.
\begin{restatable}{lemma}{LemCloseCosts} \label{LemCloseCosts}
Let $\tilde{\mathcal{C}}\in\mathcal{T}_n$ and let $B\in\mathrm{AllWits}(\tilde{\mathcal{C}})$.
	Then $\mathrm{Cost}(\tilde{\mathcal{C}})\leq \ev_\mu[c_B]\leq (1+\epsilon)\cdot\mathrm{Cost}(\tilde{\mathcal{C}}).$
\end{restatable}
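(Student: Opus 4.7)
The plan is to establish, by induction on $i \in [n]$, the joint invariant that for every rounded candidate $(i, (\tilde P_J)_{J \subseteq [k]}, (\tilde C_j)_{j=1}^k) \in \mathcal{T}_i$ produced by Algorithm~\ref{AlgDPValidCandsRounded} and every witness cover $B = (b^j)_{j=1}^k \in \mathrm{AllWits}$ of that candidate,
\[
\tilde P_J \leq \mu(\{x \in [0,1]^n : J^i(x) = J\}) \leq (1+\delta)^i\, \tilde P_J
\quad \text{and} \quad
\tilde C_j \leq \sum_{l=1}^i c_l b^j_l \leq (1+\delta)^i\, \tilde C_j
\]
hold for every $J \subseteq [k]$ and $j \in [k]$, where $\delta = \epsilon/(4n)$. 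Because the rounding operation sends $0$ to $0$ and maps positive values to positive values, the zero cases are automatically consistent with this invariant.

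For the base case $i = 1$, the pre-rounding values computed on lines~\ref{LineStartInner1}--\ref{LineEndInner1} coincide exactly with the true quantities of $B$ (by Lemma~\ref{LemInductionStart} for the $P_J$ and by definition for $C_j = c_1 b^j_1$), so the single rounding step loses at most one factor of $1+\delta$. For the inductive step $i \to i+1$, the algorithm computes its pre-rounding $P^{i+1}_J$ as a non-negative linear combination of the already rounded $(\tilde P^i_L)_L$ via the formula of Lemma~\ref{LemInductionStep}; plugging the induction hypothesis into that linear combination yields $P^{i+1}_J \leq \mu(\{x : J^{i+1}(x) = J\}) \leq (1+\delta)^i P^{i+1}_J$, and the subsequent rounding adds at most one more factor $1+\delta$. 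The analogous bound for $C_j$ follows from the additive update $C^{i+1}_j = \tilde C^i_j + c_{i+1} b^j_{i+1}$ together with the trivial $c_{i+1} b^j_{i+1} \leq (1+\delta)^i c_{i+1} b^j_{i+1}$, which is what allows the factor $(1+\delta)^i$ from the induction hypothesis to be pulled through the additive term.

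Applying the invariant at $i = n$ in combination with Lemma~\ref{lemObjectiveWithJs} then yields $\mathrm{Cost}(\tilde{\mathcal{C}}) \leq \ev_\mu[c_B]$ directly from $\tilde P_J \leq P_J$ and $\tilde C_j \leq C_j$, while the matching upper bound
\[
\ev_\mu[c_B] = \sum_{\emptyset \ne J \subseteq [k]} P_J \cdot \min_{j \in J} C_j \leq (1+\delta)^{2n} \sum_{\emptyset \ne J \subseteq [k]} \tilde P_J \cdot \min_{j \in J} \tilde C_j = (1+\delta)^{2n}\, \mathrm{Cost}(\tilde{\mathcal{C}})
\]
combined with the standard estimate $(1+\epsilon/(4n))^{2n} \leq e^{\epsilon/2} \leq 1+\epsilon$ for $\epsilon \in (0,1)$ will close the argument. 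The main technical obstacle is the careful bookkeeping of how the two rounding errors (on $P_J$ and on $C_j$) compound to exactly $(1+\delta)^{2n}$ across the $n$ iterations, and verifying that the algorithm's sequential use of rounded $P$-values in later iterations does not secretly accumulate an extra factor; once the invariant is formulated correctly, the induction itself is essentially routine.
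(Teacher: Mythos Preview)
Your proposal is correct and follows essentially the same route as the paper: the paper isolates your inductive invariant as a separate lemma (Lemma~\ref{LemRatios}) establishing $\tilde P_J\le P_J\le(1+\delta)^i\tilde P_J$ and $\tilde C_j\le C_j\le(1+\delta)^i\tilde C_j$, and then combines it with Lemma~\ref{lemObjectiveWithJs} and the bound $(1+\delta)^{2n}\le \eul^{\epsilon/2}\le 1+\epsilon$ exactly as you do. The only structural difference is that you inline the induction rather than stating it as an auxiliary lemma.
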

\begin{restatable}{proposition}{PropAllSolutionsOccur} \label{PropAllSolutionsOccur}
Let an instance of the discrete Pareto cover problem be given.
	For each $i\in[n]$ and each cover $B=(b^j)_{j=1}^k$ such that $b^k=a^*$ and $b^j_l=0$ for $j=1\dots,k-1$ and $l=i+1,\dots,n$, there exists $\mathcal{C}\in\mathcal{T}_i$ such that $B\in\mathrm{AllWits}(\mathcal{C})$.
\end{restatable}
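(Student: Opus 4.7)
The claim is purely structural: it asserts that the imaginary map $\mathrm{AllWits}$, maintained only for analysis and \emph{not} the actual $\mathrm{Witness}$ map retained by the algorithm, tracks \emph{every} cover whose first $i$ columns are consistent with some execution path of the dynamic program. The natural approach is induction on $i$, simply tracing the control flow of Algorithm~\ref{AlgDPValidCandsRounded}. (As in the remark following Definition~\ref{defDiscretePareto}, we may assume $B \subseteq \{a_0,\dots,a_{M+1}\}^n$, so that every coordinate of $B$ lies in the set $A$ over which the algorithm enumerates.)

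For the base case $i=1$, given $B=(b^j)_{j=1}^k$ with $b^k=a^*$ and $b^j_l=0$ for $j<k$ and $l\geq 2$, the loop of Line~\ref{LineForeachBeta1} in particular enumerates the tuple $(\beta^j)_{j=1}^k \defn (b^j_1)_{j=1}^k$ (note $\beta^k = b^k_1 = a^*_1$). Reading off Lines~\ref{LineStartInner1}--\ref{LineEndInner1}, the cover inserted into $\mathrm{AllWits}$ of the resulting rounded candidate $\mathcal{C}\in\mathcal{T}_1$ is exactly $B$, by the very definition of the witness constructed there.

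For the inductive step, suppose the claim holds at $i$ and let $B$ satisfy the hypothesis at $i+1$. Define a truncated cover $B^-=(b^{-,j})_{j=1}^k$ by $b^{-,k} \defn a^*$ and, for $j<k$, $b^{-,j}_l \defn b^j_l$ if $l\neq i+1$ and $b^{-,j}_{i+1} \defn 0$. Then $B^-$ satisfies the hypothesis at $i$, so the induction hypothesis furnishes $\mathcal{C}\in\mathcal{T}_i$ with $B^- \in \mathrm{AllWits}(\mathcal{C})$. When the algorithm processes iteration $i+1$, the loop of Line~\ref{LineForLoopOldCands} visits this particular $\mathcal{C}$, and the inner loop of Line~\ref{LineForBeta2} visits the tuple $(\beta^j)_{j=1}^k \defn (b^j_{i+1})_{j=1}^k$, which is admissible since $\beta^k = b^k_{i+1} = a^*_{i+1}$. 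Let $\mathcal{C}'\in\mathcal{T}_{i+1}$ be the rounded candidate produced in Lines~\ref{LineStartInner2}--\ref{LineEndInner2}. The imaginary loop of Line~\ref{LineForLoopAllWits} iterates over \emph{every} element of $\mathrm{AllWits}(\mathcal{C})$; when it picks $(\tilde{b}^{-,j})_{j=1}^k = B^-$, the cover it constructs agrees with $B^-$ on all coordinates $l\neq i+1$ and equals $\beta^j = b^j_{i+1}$ on coordinate $i+1$ for each $j$, so it equals $B$. Hence $B\in\mathrm{AllWits}(\mathcal{C}')$, completing the induction.

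The main obstacle is conceptual rather than technical: one must be careful not to confuse $\mathrm{AllWits}$ with the actually maintained $\mathrm{Witness}$ map. The analogous statement for $\mathrm{Witness}$ fails in general, because several $\beta$-tuples may round to the same candidate and the algorithm keeps only one representative cover. The induction above goes through precisely because the imaginary update in Line~\ref{LineForLoopAllWits} fans out over every previously recorded witness, preserving the full preimage at each step; no analytic estimates or properties of the rounding are needed.
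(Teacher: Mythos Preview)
Your proof is correct and follows essentially the same inductive argument as the paper: truncate the cover at coordinate $i$ (your $B^-$, the paper's $B^{i-1}$), invoke the induction hypothesis to locate it in some $\mathrm{AllWits}(\mathcal{C})$, and then trace the iteration of the outer loops at level $i+1$ with the appropriate $(\beta^j)$ to see that $B$ is added to $\mathrm{AllWits}(\mathcal{C}')$. Your explicit remark that one may assume $B\subseteq A^n$ so that the coordinates actually appear in the enumeration is a useful clarification the paper leaves implicit.
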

Combining Theorem~\ref{TheoApproxGuarantee}, Lemma~\ref{LemTPolynomial} and Theorem~\ref{TheoPolyTime} and observing that we can compute the cost of a candidate in polynomial time for constant $k$, we obtain the following result:
\begin{theorem}
	There is an FPTAS for the discrete Pareto cover problem.
\end{theorem}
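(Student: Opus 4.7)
The plan is simply to assemble the machinery already in place: run Algorithm~\ref{AlgDPValidCandsRounded} on input $(I,\epsilon)$, iterate over the candidates in $\mathcal{T}_n$, compute $\mathrm{Cost}(\mathcal{C})$ for each one, pick a minimizer $\mathcal{C}^*\in\argmin_{\mathcal{C}\in\mathcal{T}_n} \mathrm{Cost}(\mathcal{C})$, and return the stored $\mathrm{Witness}(\mathcal{C}^*)$ as the approximate Pareto cover.

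For the approximation guarantee, I would invoke Theorem~\ref{TheoApproxGuarantee} directly: the returned witness has expected cost at most $(1+\epsilon)\cdot\mathrm{OPT}$. Unwinding that statement, Proposition~\ref{PropAllSolutionsOccur} ensures that an optimum cover $B^*$ (which, by the reduction observation at the start of Section~\ref{SecRounding}, may be assumed to satisfy $b^{*,k}=a^*$) appears in $\mathrm{AllWits}(\mathcal{C}^{\mathrm{opt}})$ for some $\mathcal{C}^{\mathrm{opt}}\in\mathcal{T}_n$, and Lemma~\ref{LemCloseCosts} then gives $\mathrm{Cost}(\mathcal{C}^{\mathrm{opt}})\leq\ev_\mu[c_{B^*}]=\mathrm{OPT}$ and $\ev_\mu[c_{\mathrm{Witness}(\mathcal{C}^*)}]\leq (1+\epsilon)\cdot\mathrm{Cost}(\mathcal{C}^*)\leq(1+\epsilon)\cdot\mathrm{Cost}(\mathcal{C}^{\mathrm{opt}})$. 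Chaining these inequalities closes the correctness argument.

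For the runtime, I would cite Theorem~\ref{TheoPolyTime} to conclude that Algorithm~\ref{AlgDPValidCandsRounded} itself runs in time polynomial in $\sz(I)$ and $\epsilon^{-1}$, and Lemma~\ref{LemTPolynomial} to conclude $|\mathcal{T}_n|\leq|\mathcal{T}|$ is polynomially bounded for constant $k$. The only remaining piece is the post-processing: evaluating $\mathrm{Cost}(\mathcal{C})=\sum_{\emptyset\neq J\subseteq[k]} P_J\cdot\min_{j\in J} C_j$ for each candidate. Since $k$ is constant, this sum has $2^k-1$ terms and each minimum involves at most $k$ entries; moreover every $P_J$ and $C_j$ is stored as $0$ or a power $(1+\delta)^m$ whose exponent has polynomially bounded absolute value by the bounds underlying Lemma~\ref{LemTPolynomial}, so each arithmetic operation is on numbers of polynomial bit length. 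Hence the post-processing takes polynomial time as well. There is no real obstacle to anticipate here; the proof is purely a combination step, and the only subtle point worth stating explicitly is this polynomial bit-length argument for computing $\mathrm{Cost}(\mathcal{C})$, which justifies the parenthetical remark immediately preceding the theorem.
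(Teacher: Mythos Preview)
Your proposal is correct and matches the paper's own argument essentially verbatim: the paper derives the theorem in a single sentence by combining Theorem~\ref{TheoApproxGuarantee}, Lemma~\ref{LemTPolynomial}, and Theorem~\ref{TheoPolyTime} together with the observation that the cost of a candidate can be computed in polynomial time for constant $k$. Your explicit bit-length justification for that last observation is a welcome elaboration of what the paper leaves implicit.
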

Together with Corollary~\ref{CorReductionFPTAS}, this finally yields Theorem~\ref{TheoFPTASGeneral}, which we restate once again.
\begin{RestateTheoFPTAS}
	Let $k \in \mathbb{N}$ be fixed.
Given a nice product measure $\mu$ on $[0,1]^n$ and $c \in \Q^n_{\ge 0}$, the problem of computing an (approximately) optimal Pareto cover of size $k$ with respect to $\mu$ and $c$ admits an FPTAS.
\end{RestateTheoFPTAS}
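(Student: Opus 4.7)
The plan is to assemble the FPTAS by composing the two reductions that have already been set up in the preceding subsections. Concretely, given a nice product measure instance $I=(\sigma_1,\dots,\sigma_n,c,k,\alpha)$ and an approximation parameter $\gamma\in(0,1)\cap\mathbb{Q}$, I would first invoke the reduction of Section~\ref{SecGeneralToDiscrete} to build a discrete Pareto cover instance $I_\gamma$ as in Theorem~\ref{TheoReduction}. By that theorem, $I_\gamma$ has encoding length polynomial in $\mathrm{size}(I)$ and $\gamma^{-1}$, and any $(1+\tfrac{\gamma}{15})$-approximate solution for $I_\gamma$ is a $(1+\gamma)$-approximate solution for $I$. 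This is precisely the content of Corollary~\ref{CorReductionFPTAS}, so all that remains is to supply an FPTAS for the discrete Pareto cover problem and feed it $I_\gamma$ with parameter $\tfrac{\gamma}{15}$.

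For the discrete FPTAS itself, I would run Algorithm~\ref{AlgDPValidCandsRounded} on $I_\gamma$ with tolerance $\epsilon \defn \tfrac{\gamma}{15}$, then among all rounded candidates $\tilde{\mathcal{C}}\in\mathcal{T}_n$ select one of minimum cost $\mathrm{Cost}(\tilde{\mathcal{C}})$ and return $\mathrm{Witness}(\tilde{\mathcal{C}})$. Correctness of this output as a $(1+\epsilon)$-approximation is exactly Theorem~\ref{TheoApproxGuarantee}: its two ingredients are Proposition~\ref{PropAllSolutionsOccur}, which ensures that an optimum cover $B^*$ (normalized so that $b^{*,k}=a^*$) appears as a witness for some candidate in $\mathcal{T}_n$, and Lemma~\ref{LemCloseCosts}, which guarantees that the rounded cost of any candidate approximates the true expected cost of any of its witnesses up to a factor $1+\epsilon$. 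Chaining these two facts, the candidate of minimum rounded cost has true cost at most $(1+\epsilon)\cdot\mathrm{OPT}(I_\gamma)$.

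The running time of this discrete subroutine is polynomial in $\sz(I_\gamma)$ and $\epsilon^{-1}$ by Theorem~\ref{TheoPolyTime}, whose main ingredient Lemma~\ref{LemTPolynomial} bounds $|\mathcal{T}|$ polynomially for constant $k$. Since $\sz(I_\gamma)$ is itself polynomial in $\sz(I)$ and $\gamma^{-1}$ by Theorem~\ref{TheoReduction}, and since $\epsilon^{-1}=15\gamma^{-1}$, the total running time is polynomial in $\sz(I)$ and $\gamma^{-1}$, as required for an FPTAS.

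Everything non-trivial has been deferred to the cited statements, so the proof is essentially a bookkeeping argument; the only subtlety to check is that the approximation factors compose correctly, namely that $(1+\tfrac{\gamma}{15})\cdot(1+\tfrac{\gamma}{15})\leq 1+\gamma$ for $\gamma\in(0,1)$, which is a direct calculation. No further delicate estimates are needed at this stage, since the genuinely hard work—the discretization lemmas of Section~\ref{SecGeneralToDiscrete} and the rounding analysis of Section~\ref{SecRounding}—has already been carried out.
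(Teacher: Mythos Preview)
Your proposal is correct and follows exactly the paper's approach: combine the reduction of Theorem~\ref{TheoReduction}/Corollary~\ref{CorReductionFPTAS} with the discrete FPTAS obtained from Algorithm~\ref{AlgDPValidCandsRounded} via Theorems~\ref{TheoApproxGuarantee} and~\ref{TheoPolyTime}. Your final remark about checking $(1+\tfrac{\gamma}{15})^2\le 1+\gamma$ is unnecessary, since Theorem~\ref{TheoReduction} already absorbs the full composition of approximation factors into its statement (in its proof the relevant bound is $(1+\tfrac{\gamma}{15})^4<1+\gamma$), so once you invoke it no further factor bookkeeping is required.
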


\section{Conclusion}
\label{secQuestions}
In this paper, we have introduced the Pareto cover problem and studied the case of product measures and linear cost functions. For fixed $k$, we have come to a pretty good understanding of its complexity: On the one hand, we could show weak NP-hardness of the problem. On the other hand, we have established the existence of an FPTAS.

However, there are several questions that remain open and constitute an interesting subject for future research. To begin with, we have seen that even in a very restricted setting such as the uniform probability distribution on $[0,1]^n$, it seems non-trivial to find an optimum cover (see Figure~\ref{figFirstExample}). Consequently, in order to obtain a better feeling for the problem at hand, it can be worthwhile to examine the structure of optimum solutions for such special cases.

  When dealing with the binary problem variant, another question that comes up is for which subsets of $\{0,1\}^n$, there exists an instance they are optimum for. Any insights towards this question may lead to new, perhaps more efficient strategies to tackle the Pareto cover problem.
 
  In addition to these rather concrete questions, there are also several more fundamental issues one may want to address. For instance, even though we have seen that computing the objective value attained by an arbitrary solution is \#P-hard in general (i.e., for non-fixed $k$), this does not resolve containment in NP since there might be another choice of a certificate that does the trick. More generally, it would be interesting to fully understand the dependence of the problem complexity on the parameter $k$. To this end, note that the complexity does not simply ``increase'' with larger values of $k$, given that in the discrete setting, the problem is weakly NP-hard for constant $k\geq 2$, and strongly NP-hard, e.g., for $k=\frac{n+5}{3}$, but once $k$ is at least as large as the number of all possible discrete vectors $b$, it is obvious what an optimum solution should look like (and we can output it in polynomial time, assuming an appropriate encoding of $k$ is chosen). Hence, it seems interesting to further investigate the hardness transition of the problem: When exactly does the problem become strongly NP-hard? When does it become easier again?
 
Finally, as all of our results apply to the case of product measures, it appears natural to ask what can be done for general probability measures. To this end, as alluded to in Section~\ref{secRelatedWork}, it could be fruitful to explore connections to existing results from statistical learning theory.

\bibliographystyle{plainurl}
\bibliography{references}

\appendix

\section{Two useful inequalities}
In the subsequent arguments, we employ the two well-known inequalities 
\begin{equation}
\forall x\in\mathbb{R}: 1+x\leq \eul^x \label{EqOneplusX}
\end{equation}
and
\begin{equation}
\forall x\in [0,1]: \eul^x\leq 1+(\eul-1)\cdot x\leq 1+2x\label{EqBoundExpX},
\end{equation}
which follow by convexity of the exponential function. More precisely, \eqref{EqOneplusX} arises by looking at the tangent in $(0,1)$, while studying the secant passing through $(0,1)$ and $1,\eul)$ gives rise to \eqref{EqBoundExpX}.

\section{Omitted details from the hardness proof for $k=2$\label{SecAppendixHardnessk=2}}
In order to establish Lemma~\ref{lemHardnessProbabilities}, we need to explain how to obtain close approximations of the values of the exponential function that occur in our reduction in polynomial time. Lemma~\ref{LemExpApprox} takes care of this.
\begin{lemma}
	Let $x\in [0,2]\cap\mathbb{Q}$ and $m\in\mathbb{N}$. Then we can compute a rational number $y$ with $(1-2^{-m})\cdot \eul^{-x}\leq y \leq (1+2^{-m})\cdot \eul^{-x}$ in time polynomial in $m$ and the encoding length of $x$.\label{LemExpApprox}
	\end{lemma}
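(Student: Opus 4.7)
The plan is to truncate the Taylor series of $\eul^{-x}$ at a sufficiently large index~$N$, perform the arithmetic in exact rational form, and return the resulting partial sum. Concretely, set
\[
T_N \defn \sum_{i=0}^{N} \frac{(-x)^i}{i!} \in \Q,
\]
for a value of $N$ to be determined, and return $y = T_N$.

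The first step is to choose $N$ so that the truncation error is small enough. Since $x \in [0,2]$, the ratio of consecutive terms in magnitude is $x/(i+1) \leq 2/(i+1) < 1$ for $i \geq 2$, so for $N \geq 2$ the remainder is an alternating series with monotonically decreasing terms and the Leibniz estimate gives
\[
|T_N - \eul^{-x}| \le \frac{x^{N+1}}{(N+1)!} \le \frac{2^{N+1}}{(N+1)!}.
\]
Using $\eul^{-x} \ge \eul^{-2}$, it suffices to choose $N$ with $(N+1)! \ge 2^{N+m+1} \cdot \eul^2$. A standard Stirling-type estimate ($\log_2 n! \ge n \log_2(n/\eul)$) shows that any $N = \Theta(m)$ that is a small constant plus $m$ will do: then the truncation error is at most $2^{-m} \cdot \eul^{-x}$, which gives exactly the desired two-sided bound
\[
(1-2^{-m}) \eul^{-x} \le T_N \le (1+2^{-m}) \eul^{-x}.
\]

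The second step is to check that $T_N$ can be computed as a rational number in time polynomial in $m$ and the encoding length $L$ of~$x$. Writing $x = u/v$ in lowest terms, the common denominator $v^N \cdot N!$ has encoding length $O(N L + N \log N)$, and each numerator $(-u)^i \cdot v^{N-i} \cdot N!/i!$ has encoding length $O(N L + N \log N)$ as well. Summing $N+1 = O(m)$ such terms and reducing produces a rational of encoding length polynomial in $m$ and $L$, in time polynomial in those parameters.

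The only real thing to be careful about is the choice of $N$: we need both $N \ge 2$ for the Leibniz bound to apply and $N$ large enough in terms of $m$ so that the factorial beats the exponential. Everything else is routine rational arithmetic. If desired, one can equivalently first approximate $\eul^x$ by the (positive-term) partial sum and then invert as a rational; this avoids the alternating-series argument at the cost of a short computation to turn relative error on $\eul^x$ into relative error on $\eul^{-x}$.
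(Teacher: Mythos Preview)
Your approach is essentially the same as the paper's: truncate the Taylor series of $\eul^{-x}$ at $N=\Theta(m)$ terms, bound the tail, and observe that the rational arithmetic runs in polynomial time. The only difference is that the paper bounds the tail $\sum_{n>N}(-x)^n/n!$ by a direct estimate (with the explicit choice $N=\max\{32,2m+4\}$) rather than appealing to the Leibniz alternating-series bound together with Stirling as you do; this is a minor technical variation, not a different route.
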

\begin{proof}
As $x\in [0,2]$, we have $\eul^{-x}\in [\eul^{-2}, 1]\subseteq \left[\frac{1}{8},1\right]$. Hence, it suffices to approximate $\eul^{-x}$ up to an additive error of $2^{-(m+3)}$. We know that we have
\[\eul^{-x}=\sum_{n=0}^\infty \frac{(-x)^n}{n!},\] where $0!:=1$.	Define $N:=\max\{32, 2m+4\}$. Note that $N$ is even. We can compute \[\sum_{n=0}^N \frac{(-x)^n}{n!}\] in time polynomial in $m$ and the encoding length of $x$, so if we can further show that
\[\left|\sum_{n=N+1}^\infty \frac{(-x)^n}{n!}\right|\leq 2^{-(m+3)},\] we are done. To this end, we compute
\begin{align*}
\left|\sum_{n=N+1}^\infty \frac{(-x)^n}{n!}\right|&\leq \sum_{n=N+1}^\infty \left|\frac{(-x)^n}{n!}\right|=\sum_{n=N+1}^\infty \frac{x^n}{n!}\\
&\leq \sum_{n=N+1}^\infty \frac{x^n}{\frac{N}{2}\cdot \left(\frac{N}{2}+1\right)\cdot\dots \cdot n}\leq \sum_{n=N+1}^\infty \frac{x^n}{\left(\frac{N}{2}\right)^{n-\frac{N}{2}}}\\
&\leq \sum_{n=N+1}^\infty \frac{x^n}{\left(\frac{N}{2}\right)^{\frac{n}{2}}}=\sum_{n=N+1}^\infty\left(\frac{\sqrt{2}\cdot x}{\sqrt{N}}\right)^n\\
&\stackrel{x\in[0,2], N\geq 32}{\leq}\sum_{n=N+1}^\infty\left(\frac{\sqrt{2}\cdot 2}{\sqrt{32}}\right)^n=\sum_{n=N+1}^\infty\left(\frac{1}{2}\right)^n\\&=2\cdot \left(\frac{1}{2}\right)^{N+1}\leq \left(\frac{1}{2}\right)^{2m+4}\leq 2^{-(m+3)}.
\end{align*}
\end{proof}
\lemHardnessProbabilities*
\begin{proof}
First, we show how to compute $(p_i)_{i=1}^n$ such that 	\[
\frac{1-\beta}{\eul^{\alpha \cdot a_i}} \le 1-p_i \le \frac{1+\beta}{\eul^{\alpha \cdot a_i}}
\quad
\text{for all } i \in [n].\]
As \[\alpha\cdot a_i=2\cdot\frac{a_i}{\sum_{j=1}^n a_j}\in [0,2]\quad
\text{for all } i \in [n],\] Lemma~\ref{LemExpApprox} tells us that for each $i$, we can compute a rational number $q_i$ with \[(1-\beta)\cdot\eul^{-\alpha\cdot a_i} \leq q_i\leq (1+\beta)\cdot \eul^{-\alpha\cdot a_i}\] in time polynomial in \[\log(\beta^{-1})\in\mathcal{O}\left(\log\left(\sum_{i=1}^n a_i\right) + \log(n)\right)\] and the encoding length of \[\alpha\cdot a_i=\frac{2}{\sum_{l=1}^n a_l}\cdot a_i.\]  In particular, we can compute such numbers $(q_i)_{i=1}^n$ in time polynomial in the enconding length $\sz\left((a_i)_{i=1}^n\right)$ of the \texttt{PARTITION} instance $a_1,\dots,a_n$. Setting $p_i:=1-q_i$  for $i=1,\dots, n$ meets all requirements.

 Finally, we discuss how to determine a rational number $\gamma$ with \[\frac{(1-\beta)^{n+2}}{\alpha \cdot \eul}
\le \sum_{i=1}^n a_i - \gamma
\le \frac{(1-\beta)^n}{\alpha \cdot \eul}.
\] By Lemma~\ref{LemExpApprox}, we can first compute $\gamma_0$ with $(1-\beta)\cdot \eul^{-1}\leq \gamma_0\leq (1+\beta)\cdot \eul^{-1}$ in time polynomial in $\log(\beta^{-1})$, which we have already verified to be polynomial in $\sz\left((a_i)_{i=1}^n\right)$. Then, we set $\gamma_1:=(1-\beta)^{n+1}\cdot\alpha^{-1}\cdot \gamma_0$. As the encoding lengths of $\alpha$ and $\beta$ are polynomial in $\sz\left((a_i)_{i=1}^n\right)$, the latter can also be done in polynomial time. $\gamma_1$ now satisfies
\[(1-\beta)^{n+2}\cdot\alpha^{-1}\cdot\eul^{-1}\leq \gamma_1\leq (1-\beta)^{n+1}\cdot (1+\beta)\cdot\alpha^{-1}\cdot\eul^{-1} < (1-\beta)^n\cdot\alpha^{-1}\cdot \eul^{-1}.\]
Finally, assigning $\gamma:=\sum_{i=1}^n a_i -\gamma_1$ does the trick.	
\end{proof}
Now, we want to prove Lemma~\ref{lemHardnessReduction}. To this end, it suffices to verify the following two claims:
\begin{claim}
	If there is $I\subseteq[n]$ such that $\sum_{i\in I} a_i = \frac{1}{2}\cdot\sum_{i=1}^n a_i$, then $\ev_{\mu_p}[c_B]\leq \gamma$, where $B=\{b,\onevec\}$ and $b\in\{0,1\}^n$ is given by $b_i = 0 \Leftrightarrow i\in I$. \label{ClaimPartToBundle}
\end{claim}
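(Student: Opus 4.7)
The plan is to directly leverage the identity
\[
\sum_{i=1}^n a_i - \ev_{\mu_p}[c_B] = \prod_{i \in I}(1-p_i) \cdot \sum_{i \in I} a_i
\]
that was already derived in the sketch preceding Lemma~\ref{lemHardnessProbabilities} (and which holds verbatim for the perturbed probabilities $p_i$, since the derivation only uses $c_i = a_i$ and the product-measure structure). Under the assumption $\sum_{i \in I} a_i = \tfrac{1}{2}\sum_{i=1}^n a_i = \alpha^{-1}$, it suffices to lower bound the product $\prod_{i \in I}(1-p_i)$ sharply enough that the right-hand side is at least $\sum_{i=1}^n a_i - \gamma$.

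First I would apply the lower bound $1-p_i \ge (1-\beta)\eul^{-\alpha\cdot a_i}$ supplied by Lemma~\ref{lemHardnessProbabilities} term by term, yielding
\[
\prod_{i \in I}(1-p_i) \;\ge\; (1-\beta)^{|I|}\cdot \eul^{-\alpha \sum_{i \in I} a_i} \;=\; (1-\beta)^{|I|}\cdot \eul^{-1},
\]
where the equality uses the \texttt{PARTITION} hypothesis $\alpha \sum_{i \in I} a_i = 1$. Since $|I| \le n$, we can further weaken this to $(1-\beta)^n\cdot \eul^{-1}$, which crucially no longer depends on $I$.

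Multiplying by $\sum_{i \in I} a_i = \alpha^{-1}$ gives
\[
\sum_{i=1}^n a_i - \ev_{\mu_p}[c_B] \;\ge\; \frac{(1-\beta)^n}{\alpha \cdot \eul}.
\]
Combining with the upper bound $\sum_{i=1}^n a_i - \gamma \le \frac{(1-\beta)^n}{\alpha \cdot \eul}$ from Lemma~\ref{lemHardnessProbabilities} then yields $\ev_{\mu_p}[c_B] \le \gamma$ as required. The argument is essentially a two-line inequality chain; the only subtlety is making sure the assumption $\sum_{i \in I} a_i = \alpha^{-1}$ is invoked inside the exponent (to simplify $\eul^{-\alpha\sum_I a_i}$ to $\eul^{-1}$) \emph{and} as the multiplicative factor $\sum_{i\in I}a_i = \alpha^{-1}$, which together produce the term $(\alpha\cdot \eul)^{-1}$ that matches the bound on $\sum a_i - \gamma$ from Lemma~\ref{lemHardnessProbabilities} up to the $(1-\beta)^n$ slack. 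I do not anticipate any real obstacle: the slack factor $(1-\beta)^n$ in Lemma~\ref{lemHardnessProbabilities} was chosen precisely so that the rounding of probabilities swallowed into $p_i$ does not ruin the comparison with $\gamma$ in the ``yes'' direction.
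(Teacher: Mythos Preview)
Your proposal is correct and follows essentially the same route as the paper. The paper packages the inequality $\ev_{\mu_p}[c_B]\le \sum_i a_i-(1-\beta)^n\cdot h(\sum_{i\in I}a_i)$ (with $h(x)=x\eul^{-\alpha x}$) into a separate proposition giving both upper and lower bounds, but the actual chain of inequalities used in the ``yes'' direction is exactly your steps: bound $\prod_{i\in I}(1-p_i)\ge(1-\beta)^{|I|}\eul^{-1}\ge(1-\beta)^n\eul^{-1}$, multiply by $\alpha^{-1}$, and compare with the upper bound on $\sum_i a_i-\gamma$ from Lemma~\ref{lemHardnessProbabilities}.
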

\begin{claim}
	If there is no $I\subseteq[n]$ such that $\sum_{i\in I} a_i = \frac{1}{2}\cdot\sum_{i=1}^n a_i$, then any cover $B$ attains costs of $\ev_{\mu_p}[c_B]> \gamma$.\label{ClaimBundleToPart}
\end{claim}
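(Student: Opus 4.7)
The plan is to reduce every Pareto cover of size $2$ to a canonical one of the form $\{b,\onevec\}$ with $b\in\{0,1\}^n$ without increasing cost, then to analyze such canonical covers via the same identity used in the informal sketch preceding Lemma~\ref{lemHardnessProbabilities}, and finally to argue that the integrality of $\alpha^{-1}$ together with unimodality of $h(x)\defn x\eul^{-\alpha x}$ forces a gap that the multiplicative error controlled by $\beta$ can no longer absorb.

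First, I would note that since all $p_i$ lie in $(0,1)$, every $x\in\{0,1\}^n$ has positive $\mu_p$-mass. In particular, $\onevec$ itself must be covered, so any Pareto cover of size $2$ contains $\onevec$; the other element $b^1$ may be replaced by $b\in\{0,1\}^n$ defined by $b_i=1$ iff $b^1_i=1$ without changing which points of $\{0,1\}^n$ are covered, and (since $c\ge\zerovec$) with cost only weakly decreasing. Hence it suffices to show $\ev_{\mu_p}[c_{\{b,\onevec\}}]>\gamma$ for every $b\in\{0,1\}^n$. For such $b$, letting $I\defn\{i:b_i=0\}$ and repeating the computation from the informal discussion yields
\[\sum_{i=1}^n a_i-\ev_{\mu_p}[c_{\{b,\onevec\}}]=\prod_{i\in I}(1-p_i)\cdot\sum_{i\in I}a_i,\]
and the upper bound $1-p_i\le(1+\beta)\eul^{-\alpha a_i}$ from Lemma~\ref{lemHardnessProbabilities} dominates this by $(1+\beta)^n\cdot h(\sigma_I)$, where $\sigma_I\defn\sum_{i\in I}a_i$.

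Next I would exploit integrality: since the $a_i$ are positive integers and $\sum_i a_i$ is even, $\alpha^{-1}=\tfrac{1}{2}\sum_i a_i$ is an integer, and it is also the unique maximizer of the unimodal function $h$. Under the hypothesis of the claim $\sigma_I\ne\alpha^{-1}$, so $|\sigma_I-\alpha^{-1}|\ge 1$, and unimodality gives $h(\sigma_I)\le\max\{h(\alpha^{-1}-1),h(\alpha^{-1}+1)\}$. A short Taylor-series check shows that the two candidate ratios $h(\alpha^{-1}\pm 1)/h(\alpha^{-1})=(1\mp\alpha)\eul^{\pm\alpha}$ are each at most $1-\alpha^2/6$ for $\alpha\in[0,1]$ (and $\alpha\le 1$ is harmless since \texttt{PARTITION} is trivial for $n=1$, so we may assume $n\ge 2$, whence $\sum_i a_i\ge 2$). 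Combining with $h(\alpha^{-1})=1/(\alpha\eul)$ yields
\[\sum_{i=1}^n a_i-\ev_{\mu_p}[c_{\{b,\onevec\}}]\le (1+\beta)^n\cdot\frac{1-\alpha^2/6}{\alpha\eul}.\]

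The main obstacle is then the closing algebraic estimate: Lemma~\ref{lemHardnessProbabilities} guarantees $\sum_i a_i-\gamma\ge(1-\beta)^{n+2}/(\alpha\eul)$, so I must verify
\[(1+\beta)^n\cdot(1-\alpha^2/6)<(1-\beta)^{n+2}\]
for $\beta=\alpha^2/(48(n+1))$. Since $n\beta\le\alpha^2/48\le 1/48$, the standard estimates $(1+\beta)^n\le\eul^{n\beta}\le 1+(\eul-1)n\beta\le 1+2n\beta$ together with the Bernoulli bound $(1-\beta)^{n+2}\ge 1-(n+2)\beta$ reduce the task to verifying $(3n+2)\beta\le\alpha^2/6$, which follows directly from the definition of $\beta$ (one computes $6(3n+2)<48(n+1)$). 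This will complete the argument.
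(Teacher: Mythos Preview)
Your proof is correct and follows essentially the same route as the paper: reduce to canonical covers $\{b,\onevec\}$, bound $\sum_i a_i - \ev_{\mu_p}[c_B]$ by $(1+\beta)^n h(\sigma_I)$, use integrality to force $|\sigma_I-\alpha^{-1}|\ge 1$, establish a quantitative gap for $h$ away from its maximizer, and close against the lower bound $(1-\beta)^{n+2}/(\alpha\eul)$ on $\sum_i a_i-\gamma$. The only difference is that the paper obtains its gap $h(\sigma_I)\le(1-0.25\,\alpha^2\eul^{-1})\,h(\alpha^{-1})$ via integral estimates on $h'$ (Lemma~\ref{LemAwayFromOpt}), whereas you use unimodality plus direct evaluation at $\alpha^{-1}\pm 1$ to get the sharper constant $1-\alpha^2/6$; note that your $\pm/\mp$ assignment in the ratio formula is swapped (in fact $h(\alpha^{-1}+1)/h(\alpha^{-1})=(1+\alpha)\eul^{-\alpha}$ and $h(\alpha^{-1}-1)/h(\alpha^{-1})=(1-\alpha)\eul^{\alpha}$), but this is harmless since both expressions are bounded symmetrically.
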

Recall that by positivity of the $p_i$ and the $c_i$, any optimum cover $B$ is of the form $B=\{b,\onevec\}$ with $b\in\{0,1\}^n$, and each such cover is clearly feasible. We start by obtaining upper and lower bounds on the objective values attained by such covers.
\begin{proposition}
Let $B=\{b,\onevec\}$ with $b\in\{0,1\}^n$ and let $I:=\{i\in [n]:b_i=0\}$. Then 
\[\sum_{i=1}^n a_i-(1+\beta)^n\cdot h\left(\sum_{i\in I} a_i\right)\leq \ev_{\mu_p}[c_B]\leq\sum_{i=1}^n a_i-(1-\beta)^n\cdot h\left(\sum_{i\in I} a_i\right),\]
where $h:\R\rightarrow\R, \mapsto x\cdot\eul^{-\alpha\cdot x}$.\label{PropUpperLowerBoundObj}
\end{proposition}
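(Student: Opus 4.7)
The plan is to evaluate $\ev_{\mu_p}[c_B]$ explicitly via the same decomposition already used in the reduction sketch preceding Lemma~\ref{lemHardnessProbabilities}, and then apply the two-sided bound from Lemma~\ref{lemHardnessProbabilities} to each factor $1-p_i$ for $i\in I$.

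First I would observe that, since $c=a\geq \zerovec$ and $b\leq\onevec$ coordinatewise, any $x\in\{0,1\}^n$ with $x\leq b$ satisfies $c_B(x)=c\t b$, while the remaining points satisfy $c_B(x)=c\t\onevec$. Since the coordinates are independent Bernoulli variables, $\mu_p(\{x:x\leq b\})=\prod_{i\in I}(1-p_i)$. Exactly as in the reduction sketch, this gives
\[\sum_{i=1}^n a_i-\ev_{\mu_p}[c_B]\;=\;c\t(\onevec-b)\cdot\mu_p(\{x:x\leq b\})\;=\;\Big(\sum_{i\in I}a_i\Big)\cdot\prod_{i\in I}(1-p_i).\]

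Next I would apply Lemma~\ref{lemHardnessProbabilities} to bound each factor, obtaining
\[(1-\beta)^{|I|}\cdot\eul^{-\alpha\sum_{i\in I}a_i}\;\leq\;\prod_{i\in I}(1-p_i)\;\leq\;(1+\beta)^{|I|}\cdot\eul^{-\alpha\sum_{i\in I}a_i}.\]
Because $|I|\leq n$ and $0<1-\beta\leq 1\leq 1+\beta$, we have $(1-\beta)^{|I|}\geq(1-\beta)^n$ and $(1+\beta)^{|I|}\leq(1+\beta)^n$. Multiplying through by the nonnegative quantity $\sum_{i\in I}a_i$ and recognizing $h(x)=x\cdot\eul^{-\alpha\cdot x}$ yields
\[(1-\beta)^n\cdot h\!\left(\sum_{i\in I}a_i\right)\;\leq\;\sum_{i=1}^n a_i-\ev_{\mu_p}[c_B]\;\leq\;(1+\beta)^n\cdot h\!\left(\sum_{i\in I}a_i\right),\]
and rearranging gives the stated inequalities.

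There is essentially no real obstacle here: the only point requiring care is the direction in which we replace $|I|$ by $n$ in the exponent, which is forced by $1-\beta\in(0,1]$ (so raising to a larger power decreases) and $1+\beta\geq 1$ (so raising to a larger power increases). Everything else is the routine identity for $\ev_{\mu_p}[c_B]$ combined with the per-coordinate estimate from Lemma~\ref{lemHardnessProbabilities}.
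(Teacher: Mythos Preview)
Your proposal is correct and follows essentially the same route as the paper: compute $\ev_{\mu_p}[c_B]$ via the two-case decomposition to obtain $\sum_i a_i-\ev_{\mu_p}[c_B]=\big(\sum_{i\in I}a_i\big)\prod_{i\in I}(1-p_i)$, then plug in the per-coordinate bounds from Lemma~\ref{lemHardnessProbabilities} and replace the exponent $|I|$ by $n$. The paper does exactly this, writing the chain $(1-\beta)^n\eul^{-\alpha\sum_{i\in I}a_i}\le\prod_{i\in I}(1-p_i)\le(1+\beta)^n\eul^{-\alpha\sum_{i\in I}a_i}$ in one line; your version just makes the $|I|\le n$ step explicit.
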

\begin{proof}
	We have
\begin{align}
\ev_{\mu_p}[c_B]&=c\t b\cdot\mu_p(\{x\in \{0,1\}^n: x\leq b\})+c\t\onevec\cdot\mu_p(\{x\in\{0,1\}^n: x\not\le b\}) \notag\\
&=c\t b\cdot\mu_p(\{x\in \{0,1\}^n: x\leq b\})+c\t\onevec\cdot(1-\mu_p(\{x\in\{0,1\}^n: x\le b\}))\notag\\
&=c\t\onevec - (c\t\onevec - c\t b)\cdot\mu_p(\{x\in \{0,1\}^n: x\leq b\})\notag\\
&=\sum_{i=1}^n a_i-\left(\sum_{i=1}^n a_i -\sum_{i\in [n]\setminus I} a_i\right)\cdot\prod_{i\in I}(1-p_i)\notag\\
&=\sum_{i=1}^n a_i-\sum_{i\in I} a_i\cdot \prod_{i\in I}(1-p_i)\label{EqObjectiveExp}.
\end{align}
By our choice of the $p_i$, we obtain
\[(1-\beta)^n\cdot \eul^{-\alpha\cdot \sum_{i\in I} a_i}\leq \prod_{i\in I} (1-\beta)\cdot \eul^{-\alpha\cdot a_i}\leq \prod_{i\in I}(1-p_i)\leq \prod_{i\in I} (1+\beta)\cdot \eul^{-\alpha\cdot a_i}\leq (1+\beta)^n\cdot\eul^{-\alpha\cdot\sum_{i\in I} a_i}.\]
Plugging this into \eqref{EqObjectiveExp} results in
\[\sum_{i=1}^n a_i-(1+\beta)^n\cdot h\left(\sum_{i\in I} a_i\right)\leq \ev_{\mu_p}[c_B]\leq \sum_{i=1}^n a_i-(1-\beta)^n\cdot h\left(\sum_{i\in I} a_i\right)\]	as claimed.
\end{proof}
Now, we are ready to prove Claim~\ref{ClaimPartToBundle}.
\begin{proof}[Proof of Claim~\ref{ClaimPartToBundle}]
	Pick $I\subseteq[n]$ such that $\sum_{i\in I} a_i = \frac{1}{2}\cdot\sum_{i=1}^n a_i=\alpha^{-1}$, let \mbox{$b\in\{0,1\}^n$} with  $b_i = 0 \Leftrightarrow i\in I$, and define $B:=\{b,\onevec\}$. By Proposition~\ref{PropUpperLowerBoundObj}, we obtain
	\begin{align*}
	\ev_{\mu_p}[c_B]&\leq\sum_{i=1}^n a_i-(1-\beta)^n\cdot h\left(\sum_{i\in I} a_i\right)=\sum_{i=1}^n a_i-(1-\beta)^n\cdot h(\alpha^{-1})\\
	&=\sum_{i=1}^n a_i-(1-\beta)^n\cdot \alpha^{-1}\cdot\eul^{-1}\leq \gamma.
	\end{align*}
\end{proof}
Next, we would like to prove Claim~\ref{ClaimBundleToPart}. Pick an optimum cover $B$ is of the form $B=\{b,\onevec\}$ with $b\in\{0,1\}^n$ and let $I:=\{i\in[n]:b_i=0\}$. Now, we make the following simple, but yet very useful observation. As $\sum_{i\in I} a_i \neq \frac{1}{2}\cdot\sum_{i=1}^n a_i=\alpha^{-1}$ and both the left-hand and the right-hand side are integers, we must either have $\sum_{i\in I} a_i \leq \alpha^{-1}-1$, or $\sum_{i\in I} a_i\geq \alpha^{-1}+1$. Now, Lemma~\ref{LemAwayFromOpt} comes into play, which provides an upper bound on $h\left(\sum_{i\in I} a_i\right)$, which, by Proposition~\ref{PropUpperLowerBoundObj} results in a lower bound on $\ev_{\mu_p}[c_B]$.
\begin{lemma}
Let $\alpha\in(0,1]$ and let $h:\mathbb{R}\rightarrow\mathbb{R}, x\mapsto x\cdot\eul^{-\alpha\cdot x}$. If $|x_0-\alpha|\geq 1$, then $h(x_0)\leq h(\alpha^{-1})\cdot (1-0.25\cdot\alpha^2\cdot \eul^{-1})$.\label{LemAwayFromOpt}
\end{lemma}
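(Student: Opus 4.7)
The hypothesis as printed, $|x_0-\alpha|\geq 1$, appears to be a typo: since $\alpha\in(0,1]$ the maximiser of $h$ lies at $\alpha^{-1}\geq 1$, and for instance with $\alpha=1/10$ one has $|x_0-\alpha|\geq 1$ even at $x_0=\alpha^{-1}$, where $h(x_0)=h(\alpha^{-1})$ would contradict the claimed bound. I will therefore prove the lemma under the hypothesis $|x_0-\alpha^{-1}|\geq 1$, which is what is both needed and actually used in the proof of Claim~\ref{ClaimBundleToPart}.

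Under this reading, the plan is to combine unimodality of $h$ with elementary estimates. First I would differentiate to get $h'(x)=(1-\alpha x)\eul^{-\alpha x}$, which shows that $h$ is strictly increasing on $(-\infty,\alpha^{-1}]$ and strictly decreasing on $[\alpha^{-1},\infty)$, with unique maximum $h(\alpha^{-1})=(\alpha\eul)^{-1}$. By unimodality, it suffices to verify the inequality at the two extremal admissible points $x_0=\alpha^{-1}\pm 1$. A direct substitution gives
\[ \frac{h(\alpha^{-1}+1)}{h(\alpha^{-1})}=(1+\alpha)\eul^{-\alpha},\qquad \frac{h(\alpha^{-1}-1)}{h(\alpha^{-1})}=(1-\alpha)\eul^{\alpha}, \]
so the lemma reduces to showing that both quantities are at most $1-\alpha^2/(4\eul)$ for $\alpha\in(0,1]$.

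Both inequalities will follow by power-series expansion around $\alpha=0$. The series $(1-\alpha)\eul^{\alpha}=1-\alpha^2/2-\alpha^3/3-\alpha^4/8-\cdots$ has all coefficients beyond the constant term non-positive, so one immediately obtains $(1-\alpha)\eul^{\alpha}\leq 1-\alpha^2/2$. The series $(1+\alpha)\eul^{-\alpha}=1-\alpha^2/2+\alpha^3/3-\alpha^4/8+\cdots$ is alternating with magnitudes decreasing for $\alpha\in[0,1]$, so truncating after the cubic term and using $\alpha\leq 1$ gives $(1+\alpha)\eul^{-\alpha}\leq 1-\alpha^2/2+\alpha^3/3\leq 1-\alpha^2/6$. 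Since $4\eul>6$, both right-hand sides are dominated by $1-\alpha^2/(4\eul)$, completing the proof. The main obstacle is this final numerology: the coefficient $1/(4\eul)$ is chosen so as to be slightly smaller than $1/6$, so one has to extract enough cancellation from the Taylor series to leave a quadratic-in-$\alpha$ deficit whose coefficient exceeds $1/(4\eul)$, which is why the ``up'' side is tracked to third order whereas the ``down'' side already saturates at second.
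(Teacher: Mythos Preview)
Your correction of the hypothesis to $|x_0-\alpha^{-1}|\geq 1$ is right, and your proof is correct. The reduction via unimodality to the two boundary points $\alpha^{-1}\pm 1$ is exactly the paper's first move, but from there the arguments diverge.

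The paper does not evaluate the ratios $h(\alpha^{-1}\pm 1)/h(\alpha^{-1})$ directly. Instead, for each side it writes $h(x_0)=h(\alpha^{-1})\mp\int h'(x)\,dx$ and extracts a quantitative gap by restricting the integral to a half-unit subinterval (e.g.\ $[\alpha^{-1}-1,\alpha^{-1}-0.5]$) on which $|h'|$ is bounded below by roughly $\tfrac{1}{2}\alpha\,\eul^{\pm\alpha-1}$; multiplying length $\tfrac{1}{2}$ by this bound yields the deficit $0.25\,\alpha^2\eul^{-1}\cdot h(\alpha^{-1})$. Your route is to compute the ratios in closed form as $(1\pm\alpha)\eul^{\mp\alpha}$ and bound them via their Taylor expansions, getting $(1-\alpha)\eul^{\alpha}\leq 1-\alpha^2/2$ and $(1+\alpha)\eul^{-\alpha}\leq 1-\alpha^2/6$, both comfortably below $1-\alpha^2/(4\eul)$.

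Your approach is shorter and more transparent about where the constant $1/(4\eul)$ comes from: it is simply chosen smaller than the worst of the two Taylor coefficients ($1/6$). The paper's integral argument is more ad hoc but avoids any appeal to series manipulations or the alternating-series remainder bound; it would also generalise more readily if one wanted a sharper estimate over a longer distance from the maximum, since it tracks $h'$ rather than $h$ directly.
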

\begin{proof}
The first derivative of $h$ is given by $h':\mathbb{R}\rightarrow\mathbb{R}, x\mapsto (1-\alpha\cdot x)\cdot\eul^{-\alpha\cdot x}.$ Hence, for $x\leq \alpha^{-1}$, $h'(x)\geq 0$, and for $x\geq \alpha^{-1}$, $h'(x)\leq 0$. Thus, for $x_0\leq \alpha^{-1}-1$, we obtain
\begin{align*}
h(x_0)&=h(\alpha^{-1})-\int_{x_0}^{\alpha^{-1}}h'(x)dx \leq h(\alpha^{-1})-\int_{\alpha^{-1}-1}^{\alpha^{-1}-0.5}h'(x)dx\\
&\leq h(\alpha^{-1})-0.5\cdot \min_{x\in [\alpha^{-1}-1,\alpha^{-1}-0.5]} h'(x) \\&= h(\alpha^{-1})-0.5\cdot \min_{x\in [\alpha^{-1}-1,\alpha^{-1}-0.5]} (1-\alpha\cdot x)\cdot\eul^{-\alpha\cdot x}\\
		&\leq h(\alpha^{-1})-0.5\cdot 0.5\cdot\alpha \cdot \eul^{\alpha-1}\\
		&=\alpha^{-1}\cdot \eul^{-1}-0.25\cdot \alpha\cdot\eul^{\alpha}\cdot\eul^{-1}\\
		&=\alpha^{-1}\cdot \eul^{-1}\cdot (1-0.25\cdot\alpha^2\cdot \eul^{\alpha})=h(\alpha^{-1})\cdot (1-0.25\cdot\alpha^2\cdot \eul^{\alpha})\\
		&\stackrel{\alpha\geq 0}{\leq} h(\alpha^{-1})\cdot (1-0.25\cdot\alpha^2\cdot \eul^{-1}).
\end{align*}
Similarly, for $x_0\geq \alpha^{-1}+1$, we calculate
\begin{align*}
h(x_0)&=h(\alpha^{-1})+\int_{\alpha^{-1}}^{x_0}h'(x)dx \leq h(\alpha^{-1})+\int_{\alpha^{-1}+0.5}^{\alpha^{-1}+1}h'(x)dx\\
&\leq h(\alpha^{-1})+0.5\cdot \max_{x\in [\alpha^{-1}+0.5,\alpha^{-1}+1]} h'(x) \\&= h(\alpha^{-1})+0.5\cdot \max_{x\in [\alpha^{-1}+0.5,\alpha^{-1}+1]} (1-\alpha\cdot x)\cdot\eul^{-\alpha\cdot x}\\
&\leq h(\alpha^{-1})+0.5\cdot (-0.5\cdot\alpha) \cdot \eul^{-(\alpha+1)}\\
&=\alpha^{-1}\cdot \eul^{-1}-0.25\cdot \alpha\cdot\eul^{-\alpha}\cdot\eul^{-1}\\
&=\alpha^{-1}\cdot \eul^{-1}\cdot (1-0.25\cdot\alpha^2\cdot \eul^{-\alpha})\stackrel{\alpha\leq 1}{\leq}h(\alpha^{-1})\cdot (1-0.25\cdot\alpha^2\cdot \eul^{-1}).
\end{align*}
\end{proof}
Now, we can complete the proof of Claim~\ref{ClaimBundleToPart}.
\begin{proof}[Proof of Claim~\ref{ClaimBundleToPart}]
Pick an optimum cover $B$ that is of the form $B=\{b,\onevec\}$ with $b\in\{0,1\}^n$ and let $I:=\{i\in[n]:b_i=0\}$. By the previous arguments, Proposition~\ref{PropUpperLowerBoundObj} and Lemma~\ref{LemAwayFromOpt}, we obtain
\begin{align*}
\ev_{\mu_p}[c_B]&\geq \sum_{i=1}^n a_i-(1+\beta)^n\cdot h\left(\sum_{i\in I} a_i\right)\geq \sum_{i=1}^n a_i-(1+\beta)^n\cdot(1-0.25\cdot\alpha^2\cdot \eul^{-1})\cdot h(\alpha^{-1})\\
&= \sum_{i=1}^n a_i-(1+\beta)^n\cdot(1-0.25\cdot\alpha^2\cdot \eul^{-1})\cdot \alpha^{-1}\cdot\eul^{-1}.
\end{align*}
By definition, we have \[\gamma\leq \sum_{i=1}^n a_i-(1-\beta)^{n+2}\cdot\alpha^{-1}\cdot\eul^{-1},\] so it suffices to prove that \[(1+\beta)^n\cdot (1-0.25\cdot\alpha^2\cdot \eul^{-1}) < (1-\beta)^{n+2}.\]
To this end, we calculate
\begin{align*}
(1-\beta)^{n+2}\cdot (1+\beta)^{-n}&=(1-\beta)^{n+2}\cdot \left(1-\frac{\beta}{1+\beta}\right)^n\geq (1-\beta)^{2n+2}\\&\stackrel{(*)}{\geq} \eul^{-2\cdot\beta\cdot (2n+2)}\stackrel{\eqref{EqOneplusX}}{\geq} 1-2\cdot\beta\cdot (2n+2)=1-4\cdot(n+1)\cdot \frac{\alpha^2}{48\cdot(n+1)}\\&=1-\frac{\alpha^2}{12}>1-0.25\cdot\alpha^2\cdot \eul^{-1},
\end{align*}
which completes the proof. Note that the inequality marked ($*$) follows from the fact that by convexity of the exponential function, for $\lambda\in[0,1]$, we have
\[1-\frac{\lambda}{2}\geq 1-\lambda\cdot(1-\eul^{-1})=\lambda\cdot\eul^{-1}+(1-\lambda)\cdot\eul^0\geq \eul^{\lambda\cdot(-1)+(1-\lambda)\cdot 0}=\eul^{-\lambda}.\] Plugging in $\lambda:=2\beta$ yields $1-\beta\geq \eul^{-2\cdot\beta}$, implying ($*$).
\end{proof}
\section{NP-hardness for $k\geq 3$ \label{SecHardnessKgeq3}}
\subsection{The case $k=3$}\label{SecHardnessKequal3}
Next, we deal with the case $k=3$. As before, we perform a reduction from \texttt{PARTITION}.
Before presenting the formal hardness proof, we first give some intuition on the underlying idea. We choose costs and probabilities proportional to the $a_i$, but scale down all probabilities by dividing by a very large constant $L$, that is, we set $c_i:=a_i$ and $p_i:=\frac{a_i}{L}$ for $i=1,\dots,n$. The motivation behind this is that if we choose, say, $L > \left(\sum_{i=1}^n a_i\right)^2$, then we enforce that $\zerovec$ is contained in any optimum solution $B$. This is because if we do not include $\zerovec$, by integrality of the $a_i$, with probability one, we have to select $b\neq\zerovec$ of cost $c\t b\geq 1$, leading to an objective value of at least $1$. On the other hand, even if we always have to pay for $\onevec$ whenever $x\neq\zerovec$, the total costs are still bounded by
\[\sum_{i=1}^n p_i\cdot \sum_{i=1}^n a_i = L^{-1}\cdot \left(\sum_{i=1}^n a_i\right)^2 < 1.\] As all $p_i$ are positive, including $\onevec$ into $B$ is mandatory, too. Hence, any optimum solution $B$ is of the form $\{\zerovec, b, \onevec\}$, where $b\in\{0,1\}^n$ (as all costs are strictly positive). Pick such a solution $B=\{\zerovec,b,\onevec\}$ and let $I:=\{i\in [n]: b_i = 1\}$. Then
\begin{align*}\ev_{\mu_p}[c_B]&=\mu_p(\{\zerovec\})\cdot c\t 0 + \mu_p(\{x\in\{0,1\}^n: \zerovec\neq x\leq b\})\cdot c\t b+\mu_p(\{x\in\{0,1\}^n:x\not\leq b\})\cdot c\t\onevec\\
&=-\mu_p(\{x\in\{0,1\}^n: \zerovec\neq x\leq b\})\cdot (c\t\onevec-c\t b)+\underbrace{(1-\mu_p(\{\zerovec\})\cdot c\t\onevec}_{\text{constant}}.\end{align*}
The latter term is constant, so minimizing the objective is equivalent to maximizing the product $\mu_p(\{x\in\{0,1\}^n: \zerovec\neq x\leq b\})\cdot (c\t\onevec-c\t b)$. We further observe that for $L$ small enough,
$\mu_p(\{x\in\{0,1\}^n: \zerovec\neq x\leq b\})\approx \sum_{i\in I} p_i=\frac{1}{L}\cdot\sum_{i\in I} a_i$ since the probability that at least two coordinates assume the value $1$ is by an order of $L^{-1}$ smaller than the probability that this happens for one individual coordinate. This yields \[\mu_p(\{x\in\{0,1\}^n: \zerovec\neq x\leq b\})\cdot (c\t\onevec-c\t b) \approx \frac{1}{L}\cdot\sum_{i\in I}a_i\cdot\left(\sum_{i=1}^n a_i -\sum_{i\in I} a_i\right).\] The right-hand side attains the maximum possible value of $\frac{1}{L}\cdot \left(\frac{\sum_{i=1}^n a_i}{2}\right)^2$ if and only if there is a feasible solution to the \texttt{PARTITION}-problem. Otherwise, by integrality of the $a_i$, the right-hand side is by at least $\frac{1}{L}$ smaller. Hence, choosing the threshold $\gamma$ for the decision variant of the Pareto cover problem appropriately allows us to determine whether $(a_i)_{i=1}^n$ is a ``yes'' instance. The remainder of this section contains a formal hardness proof. In order to conduct it, we need the following auxiliary statement:
\begin{proposition}
	Let $(\alpha_i)_{i=1}^n\in [0,1]^n$. Then $\sum_{i=1}^n \alpha_i\geq 1-\prod_{i=1}^n (1-\alpha_i)$.\label{PropProductBound}
	\end{proposition}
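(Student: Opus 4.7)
The plan is to prove the inequality by induction on $n$, as the statement has a clean recursive structure. The base case $n=1$ is immediate, since both sides equal $\alpha_1$. For the inductive step, assuming $\sum_{i=1}^n \alpha_i \geq 1 - \prod_{i=1}^n(1-\alpha_i)$, I would add $\alpha_{n+1}$ to both sides and then reduce the goal
\[
\alpha_{n+1} + 1 - \prod_{i=1}^n(1-\alpha_i) \;\geq\; 1 - (1-\alpha_{n+1})\prod_{i=1}^n(1-\alpha_i)
\]
to the obviously true inequality $\alpha_{n+1}\bigl(1 - \prod_{i=1}^n(1-\alpha_i)\bigr) \geq 0$, which holds because every factor $(1-\alpha_i)$ lies in $[0,1]$, so the product is at most $1$, and $\alpha_{n+1} \geq 0$.

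There is no real obstacle here; the only thing to watch is that we need $\alpha_i \in [0,1]$ so that $1-\alpha_i \geq 0$, which prevents the product from becoming negative and flipping the direction of the inequality in the reduction. If one preferred a one-line non-inductive argument, one could instead view the $\alpha_i$ as success probabilities of independent Bernoulli variables $X_i$; then $1 - \prod_{i=1}^n(1-\alpha_i) = \pr\bigl[\bigcup_i \{X_i=1\}\bigr]$, and the claim is exactly the union bound $\pr[\bigcup_i \{X_i=1\}] \leq \sum_i \pr[X_i=1] = \sum_i \alpha_i$. I would present the induction as the main proof since it is entirely elementary and avoids invoking external probabilistic machinery.
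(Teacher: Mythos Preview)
Your induction argument is correct. It differs from the paper's proof, which is precisely the probabilistic one-liner you mention as an alternative: interpret the $\alpha_i$ as parameters of independent Bernoulli variables, recognize $1-\prod_{i=1}^n(1-\alpha_i)$ as $\mu\bigl(\bigcup_{i=1}^n\{x:x_i=1\}\bigr)$ and $\sum_{i=1}^n\alpha_i$ as $\sum_{i=1}^n\mu(\{x:x_i=1\})$, and invoke the union bound. So the paper chooses exactly the route you set aside. Your induction has the advantage of being fully self-contained and algebraic; the paper's argument is shorter and, given that the surrounding context is already probabilistic (the proposition is used to bound $\mu_p$-probabilities in the hardness reductions), the ``external machinery'' is not really external there. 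Both are perfectly adequate for such an elementary statement.
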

\begin{proof}
Consider the product probability measure $\mu$ given $\mu(\{x\})=\prod_{i:x_1=1}\alpha_i\cdot\prod_{i:x_i=0}(1-\alpha_i)$ for $x\in\{0,1\}^n$. The left-hand side is $\sum_{i=1}^n \mu(\{x\in\{0,1\}^n: x_i=1\})$, the right-hand side is $\mu(\bigcup_{i=1}^n\{x\in\{0,1\}^n: x_i=1\}) $.
\end{proof}
\begin{theorem}
	The decision variant of the Pareto cover problem with $k=3$ is (weakly) NP-hard.
\end{theorem}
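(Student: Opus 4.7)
The plan is to formalize the intuition outlined above the theorem into a polynomial reduction from \texttt{PARTITION} to the binary Pareto cover problem with $k=3$. Given an instance $a_1,\dots,a_n\in\mathbb{Z}_{\ge 1}$ of \texttt{PARTITION} with $S\defn\sum_{i=1}^n a_i$ even, I set $c_i\defn a_i$ and $p_i\defn a_i/L$, where $L$ is a polynomially-sized integer chosen sufficiently large (for instance $L\defn 4S^4$ will comfortably suffice). The threshold $\gamma$ will be an explicit rational of the form $\gamma = (1-\mu_p(\{\zerovec\}))\cdot S - \tfrac{1}{L}\cdot\bigl(\tfrac{S}{2}\bigr)^2 + \tfrac{1}{2L}$, so that the boundary between yes- and no-instances is captured by the integrality gap $\tfrac{1}{L}$ of the \texttt{PARTITION} objective $|I|\cdot|[n]\setminus I|$-type expression. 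All parameters clearly have encoding length polynomial in $\sz(a_1,\dots,a_n)$.

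First I would argue structural properties of an optimum Pareto cover $B$. Since $p_i>0$ for all $i$, the all-ones vector must belong to $B$. Since all costs are strictly positive and $B\subseteq\{0,1\}^n$ can be assumed, any Pareto cover not containing $\zerovec$ must cover every nonzero $x\in\{0,1\}^n$ at cost at least $1$, so such a cover has objective value at least $1-\mu_p(\{\zerovec\})$. On the other hand, the cover $\{\zerovec,\zerovec,\onevec\}$ attains cost at most $S\cdot (1-\mu_p(\{\zerovec\})) \le S \cdot \sum_{i=1}^n p_i = S^2/L < 1-\mu_p(\{\zerovec\})$ by our choice of $L$ (using $\mu_p(\{\zerovec\}) \ge 1-\sum_i p_i \ge 1 - S/L$). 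This forces any optimum $B$ to contain $\zerovec$; combined with $\onevec\in B$, an optimum cover must be of the form $B=\{\zerovec,b,\onevec\}$ for some $b\in\{0,1\}^n$, with $I\defn\{i\in[n]: b_i=1\}$.

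Next I rewrite the objective. Splitting on which of $\zerovec$, $b$, $\onevec$ is chosen yields
\[
\ev_{\mu_p}[c_B] = (1-\mu_p(\{\zerovec\}))\cdot S \; - \; \bigl(S - \textstyle\sum_{i\in I}a_i\bigr)\cdot \mu_p(\{x\in\{0,1\}^n : \zerovec\ne x\le b\}).
\]
The only data-dependent term is the product on the right, so minimizing the objective amounts to maximizing
\[
\Phi(I) \defn (S-\textstyle\sum_{i\in I}a_i)\cdot \mu_p(\{x\in\{0,1\}^n : \zerovec\ne x\le b\}).
\]
Here $\mu_p(\{x : \zerovec\ne x\le b\}) = 1 - \prod_{i\in I}(1-p_i)$, and an elementary expansion (together with Proposition~\ref{PropProductBound} for the upper bound and inclusion--exclusion up to the second order for the lower bound) gives
\[
\textstyle\sum_{i\in I}p_i \; - \; \tfrac{1}{2}\bigl(\sum_{i\in I}p_i\bigr)^2 \; \le \; 1-\prod_{i\in I}(1-p_i) \; \le \; \sum_{i\in I}p_i.
\]
Substituting $p_i=a_i/L$, and writing $T\defn\sum_{i\in I}a_i$, we obtain
\[
\tfrac{1}{L}\cdot T(S-T) \; - \; \tfrac{1}{2L^2}\cdot T^2\cdot S \;\le\; \Phi(I) \;\le\; \tfrac{1}{L}\cdot T(S-T).
\]
The correction term is bounded by $\tfrac{S^3}{2L^2}\le \tfrac{1}{8LS}\le \tfrac{1}{4L}$ by our choice of $L$.

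Finally I verify the yes/no separation. Since $T\in\mathbb{Z}$ and $S$ is even, the integer-valued quantity $T(S-T)$ attains its maximum $\bigl(\tfrac{S}{2}\bigr)^2$ iff $T=\tfrac{S}{2}$, and otherwise is at most $\bigl(\tfrac{S}{2}\bigr)^2-1$. Combined with the sandwich on $\Phi(I)$, a ``yes''-instance of \texttt{PARTITION} yields a cover with $\Phi(I)\ge \tfrac{1}{L}\bigl(\tfrac{S}{2}\bigr)^2 - \tfrac{1}{4L}$, whereas every $B$ for a ``no''-instance satisfies $\Phi(I)\le \tfrac{1}{L}\bigl(\tfrac{S}{2}\bigr)^2-\tfrac{1}{L}$. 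The gap $\tfrac{3}{4L}$ is strictly larger than zero, and the threshold $\gamma$ chosen above lies strictly between the two resulting objective values, so the reduction is correct.

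The main obstacle I anticipate is the bookkeeping in the final step: one must ensure that $L$ is small enough to be encoded in polynomial size (which is trivially satisfied by $L=\Theta(S^4)$) yet large enough that (i)~$\zerovec$ is enforced in any optimum, (ii)~the second-order correction $\tfrac{1}{2}(\sum_{i\in I}p_i)^2$ is smaller than the integrality gap $\tfrac{1}{L}$, and (iii)~$\gamma$ can be written down exactly as a rational of polynomial encoding length. All three requirements are compatible with $L$ of magnitude polynomial in $S$, so the reduction is polynomial and weak NP-hardness follows.
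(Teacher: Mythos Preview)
Your approach is the same as the paper's: reduce from \texttt{PARTITION} with $c_i=a_i$ and $p_i=a_i/L$ for a polynomially large $L$, force $\{\zerovec,\onevec\}\subseteq B$, rewrite the objective so that minimizing it amounts to maximizing $T(S-T)$ with $T=\sum_{i\in I}a_i$, and exploit the integrality gap of the parabola at $T=S/2$. The paper takes $L=2S^3$ and a slightly different $\gamma$, but the structure is identical.

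One genuine slip: your identity $\mu_p(\{x:\zerovec\ne x\le b\})=1-\prod_{i\in I}(1-p_i)$ is wrong, since the event also requires $x_i=0$ for all $i\notin I$; the correct expression carries an extra factor $\prod_{i\notin I}(1-p_i)$. This does not break the argument---the missing factor lies in $[1-S/L,1]$ and contributes another $O(S^3/L^2)$ correction to the lower bound on $\Phi$, which your choice $L=4S^4$ still absorbs---but the constants in your final paragraph would need to be redone. The paper sidesteps this by lower-bounding $\mu_p(\{\zerovec\ne x\le b\})$ via the unit vectors $\xi^i$, $i\in I$, directly. Two cosmetic points: covers not containing $\zerovec$ have objective at least $1$ (even $x=\zerovec$ is covered at cost $\ge 1$), not merely $1-\mu_p(\{\zerovec\})$; and $\{\zerovec,\zerovec,\onevec\}$ is not a set of size three---use e.g.\ $\{\zerovec,\xi^1,\onevec\}$ for the comparison.
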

\begin{proof}
	Given an instance $(a_i)_{i=1}^n$ of \texttt{PARTITION}, we transform it into the following instance of the decision variant of the Pareto cover problem with $k=3$:
	We set \[M:=2\cdot\left(\sum_{j=1}^n a_j\right)^2, \]\[p_i:=\frac{a_i}{\sum_{i=1}^n a_i\cdot M}, i=1,\dots,n\]
	and \[c_i:=a_i,i=1,\dots,n.\]
	Moreover, we set \[\gamma:=\left(1-\prod_{i=1}^n (1-p_i)\right)\cdot\sum_{i=1}^n a_i +\frac{\sum_{i=1}^n a_i}{M^2}-\frac{1}{\sum_{i=1}^n a_i\cdot M}\cdot \frac{(\sum_{i=1}^n a_i)^2}{4}.\]
	The encoding length of each of these numbers is polynomial in the input size. Moreover, we have
	\begin{align*}\gamma &< \left(1-\prod_{i=1}^n (1-p_i)\right)\cdot\sum_{i=1}^n a_i +\frac{\sum_{i=1}^n a_i}{M^2}\stackrel{Prop.~\ref{PropProductBound}}{\leq}\sum_{i=1}^n p_i\cdot\sum_{i=1}^n a_i +\frac{\sum_{i=1}^n a_i}{M^2}\\&=\frac{\sum_{i=1}^n a_i}{\sum_{i=1}^n a_i\cdot M}\cdot\sum_{i=1}^n a_i +\frac{\sum_{i=1}^n a_i}{M^2}=\frac{\sum_{i=1}^n a_i}{M}+\frac{\sum_{i=1}^n a_i}{M^2}\leq \frac{1}{2}+\frac{1}{2^2}<1.\end{align*} Hence, as before, we can deduce that any cover of cost at most $\gamma < 1$ has to comprise both $\zerovec$ and $\onevec$.
	To see that our reduction is correct, it therefore suffices to prove the following claims:
	\begin{claim}
	Let $I\subseteq[n]$ with $\sum_{i\in I} a_i=\frac{1}{2}\cdot\sum_{i=1}^n a_i$ and define $b\in\{0,1\}^n$ by $b_i=1\Leftrightarrow i\in I$. Then $B:=\{\zerovec,b,\onevec\}$ is a cover with $\ev_{\mu_p}[c_B]\leq \gamma$.\label{ClaimPartitionToBundle}
	\end{claim}
	\begin{claim}
		If there is no $I\subseteq[n]$ such that $\sum_{i\in I} a_i = \frac{1}{2}\cdot\sum_{i=1}^n a_i$, then any cover $B$ attains costs of $\ev_{\mu_p}[c_B]> \gamma$.	\label{ClaimBundleToPartition}
	\end{claim}
\begin{proof}[Proof of Claim~\ref{ClaimPartitionToBundle}]
	Let $I$ such that $\sum_{i\in I} a_i=\frac{1}{2}\cdot\sum_{i=1}^n a_i$ and define $b$ and $B$ as in the statement of the claim. Denote the $i$-th unit vector by $\xi^i$, that is $\xi^i_i=1$ and $\xi^i_j=0$ for $j\neq i$.  We have \begingroup\allowdisplaybreaks\begin{align*}
	\ev_{\mu_p}[c_B]&=-\mu_p(\{x\in\{0,1\}^n: \zerovec\neq x\leq b\})\cdot (c\t\onevec-c\t b)+(1-\mu_p(\{\zerovec\})\cdot c\t\onevec\\
	\ev_{\mu_p}[c_B]&\leq-\mu_p(\{\xi^i,i\in I\})\cdot (c\t\onevec-c\t b)+(1-\mu_p(\{\zerovec\})\cdot c\t\onevec\\
	&= -\sum_{i\in I} p_i\cdot \prod_{j\in[n]\setminus \{i\}}(1-p_j)\cdot (c\t\onevec-c\t b)+(1-\mu_p(\{\zerovec\})\cdot c\t\onevec\\
	&\stackrel{Prop.~\ref{PropProductBound}}{\leq} -\sum_{i\in I} p_i\cdot \left(1-\sum_{j=1}^n p_j\right)\cdot (c\t\onevec-c\t b)+(1-\mu_p(\{\zerovec\})\cdot c\t\onevec\\
	&\leq (1-\mu_p(\{\zerovec\}))\cdot c\t\onevec+\left(\sum_{i=1}^n p_i\right)^2\cdot (c\t\onevec-c\t b)-\sum_{i\in I} p_i\cdot (c\t\onevec-c\t b)\\
	&\leq\left(1-\prod_{i=1}^n (1-p_i)\right)\cdot\sum_{i=1}^n a_i +\left(\frac{\sum_{i=1}^n a_i}{\sum_{i=1}^n a_i\cdot M}\right)^2\cdot\sum_{i=1}^n a_i-\frac{\sum_{i\in I}a_i\cdot \sum_{i\in[n]\setminus I} a_i}{\sum_{i=1}^n a_i\cdot M}\\
	&=\left(1-\prod_{i=1}^n (1-p_i)\right)\cdot\sum_{i=1}^n a_i +\frac{\sum_{i=1}^n a_i}{M^2}-\frac{1}{\sum_{i=1}^n a_i\cdot M}\cdot \frac{(\sum_{i=1}^n a_i)^2}{4}=\gamma.
	\end{align*}
	\endgroup
	This finishes the proof.
\end{proof}
\begin{proof}[Proof of Claim~\ref{ClaimBundleToPartition}]
	Assume towards a contradiction that there were a cover $B$ with $\ev_{\mu_p}[c_B]\leq \gamma$.
Then $B=\{\zerovec,b,\onevec\}$ with $b\in\{0,1\}^n$. Let $I:=\{i\in [n]: b_i=1\}$.
	Then \[\sum_{i\in I} a_i\neq \frac{1}{2}\cdot\sum_{i=1}^{n}a_i,\] and by integrality of the $a_i$ and $\frac{1}{2}\cdot\sum_{i=1}^{n}a_i$, this implies 
	\[\sum_{i\in I} a_i\cdot\sum_{i\in [n]\setminus I} a_i\leq \frac{1}{4}\cdot \left(\sum_{i=1}^n a_i\right)^2-1.\]
	We obtain
	\begingroup \allowdisplaybreaks
	\begin{align*}
	\ev_{\mu_p}[c_B]&=-\mu_p(\{x\in\{0,1\}^n: \zerovec\neq x\leq b\})\cdot (c\t\onevec-c\t b)+(1-\mu_p(\{\zerovec\})\cdot c\t\onevec\\
	&\geq -\sum_{i\in I} p_i\cdot (c\t\onevec-c\t b)+(1-\mu_p(\{\zerovec\})\cdot c\t\onevec\\
	&=\left(1-\prod_{i=1}^n (1-p_i)\right)\cdot\sum_{i=1}^n a_i-\frac{1}{\sum_{i=1}^n a_i\cdot M}\cdot \sum_{i\in I} a_i \cdot \sum_{i\in [n]\setminus I} a_i\\
	&\geq \left(1-\prod_{i=1}^n (1-p_i)\right)\cdot\sum_{i=1}^n a_i-\frac{1}{\sum_{i=1}^n a_i\cdot M}\cdot\left(\frac{1}{4}\cdot \left(\sum_{i=1}^n a_i\right)^2-1\right)\\
	&=\left(1-\prod_{i=1}^n (1-p_i)\right)\cdot\sum_{i=1}^n a_i +\frac{1}{\sum_{i=1}^n a_i\cdot M}-\frac{1}{\sum_{i=1}^n a_i\cdot M}\cdot \frac{(\sum_{i=1}^n a_i)^2}{4}\\
	&>\left(1-\prod_{i=1}^n (1-p_i)\right)\cdot\sum_{i=1}^n a_i +\frac{\sum_{i=1}^n a_i}{M^2}-\frac{1}{\sum_{i=1}^n a_i\cdot M}\cdot \frac{(\sum_{i=1}^n a_i)^2}{4}=\gamma.
	\end{align*}
	\endgroup
\end{proof}
\end{proof}
\subsection{The case $k\geq 4$}\label{SecHardnesskgeq4}
In this section, we investigate the hardness of the decision variant of the binary Pareto cover problem for $k\geq 4$. To this end, we consider the \parti problem \cite{korf1995approximate}, which is defined as follows: The input consists of positive integers $t$ and $a_1,\dots,a_m$ such that $\sum_{i=1}^m a_i$ is divisible by $t$. The task is to decide whether there exists a partition of $[m]$ into $t$ subsets $I_1,\dots,I_t$ such that $\sum_{i\in I_j} a_i =\frac{1}{t}\cdot\sum_{i=1}^m a_i$ for all $j=1,\dots,t$. For fixed constant values of $t$, this problem is usually referred to as the \mwp{t} problem (see \cite{korf1995approximate}), which is known to be weakly NP-complete for $t\geq 2$. To this end, note that the \mwp{2} problem is just the \texttt{PARTITION} problem defined above. Moreover, for $t\geq 2$, there is a simple reduction from \texttt{PARTITION} to \mwp{t} by simply mapping $(a_i)_{i=1}^m$ to $(a'_i)_{i=1}^{m+t-2}$, where $a'_i=a_i$ for $i=1,\dots,m$ and $a'_i=\frac{1}{2}\cdot\sum_{i=1}^m a_i$ for $i=m+1,\dots,m+t-2$. For $t=\frac{m}{3}$, we obtain the \texttt{3-PARTITION} problem, which is strongly NP-complete (see \cite{garey1990computers}). Our main result in this section is the following theorem:
\begin{theorem}
	There is a polynomial time reduction from the \parti problem to the decision variant of the binary Pareto cover problem. It maps a \parti instance with parameters $t$ and $m$ to an instance of the Pareto cover problem with $n=m+1$ coordinates and cover size $k=t+2$.\label{TheoReductionkgeq4}
	\end{theorem}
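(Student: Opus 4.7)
The plan is to generalize the $k=3$ reduction from Section~\ref{SecHardnessKequal3} in the natural way. Given a \parti instance with parameters $t$ and $a_1,\dots,a_m \in \Z_{\ge 1}$ with $S \defn \sum_{i=1}^m a_i$ divisible by $t$, I would construct a binary Pareto cover instance on $n = m+1$ coordinates with cover size $k = t+2$. Set costs $c_i = a_i$ for $i\in[m]$ together with a tailored value of $c_{m+1}$, and set probabilities $p_i = \frac{a_i}{S\cdot L}$ for $i\in[m]$ (where $L$ is a sufficiently large polynomial in $S$ and $n$) together with a tailored value of $p_{m+1}$. The intuition is that, for large $L$, $\mu_p$ concentrates on $\zerovec$ and the singletons $\xi^i$ among $\{0,1\}^{m+1}$, so any competitive cover must (i) contain $\zerovec$ to exploit the large mass there, (ii) contain $\onevec$ since $p_i>0$ for all $i$, and (iii) use the remaining $t$ intermediate vectors $b^1,\dots,b^t$ to cover the singletons as cheaply as possible. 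The purpose of the extra coordinate $m+1$ together with the free choice of $(p_{m+1},c_{m+1})$ is to rule out degenerate structures, forcing the optimal $b^1,\dots,b^t$ to correspond to characteristic vectors of sets $I_1,\dots,I_t \subseteq [m]$ that actually partition $[m]$.

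Once the partition structure is enforced, a direct expansion (analogous to the one for $k=3$) shows that the expected cost equals a constant depending only on $(p_i)_{i=1}^n$ and $c\t\onevec$ plus the leading contribution
\[
    \frac{1}{S L}\sum_{j=1}^t \left(\sum_{i\in I_j} a_i\right)^{2} \;+\; O(L^{-2}),
\]
because to first order only singleton events contribute, and $\xi^i$ with $i\in I_j$ is covered by $b^j$ at cost $\sum_{\ell\in I_j} a_\ell$. By the Cauchy--Schwarz inequality, $\sum_j \bigl(\sum_{i\in I_j} a_i\bigr)^{2} \ge S^2/t$, with equality iff $(I_j)_{j=1}^t$ is a feasible \parti partition. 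By integrality of the $a_i$, every unbalanced partition exceeds $S^2/t$ by an additive integer, yielding an $\Omega(1/(SL))$ gap that dominates the $O(L^{-2})$ error once $L$ is chosen polynomially large. Setting the threshold $\gamma$ between these two regimes completes the reduction.

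The main obstacle is justifying step (iii): one must rule out that a cover with overlapping $I_j$'s, or one omitting $\zerovec$ or $\onevec$, or one using coordinate $m+1$ in an unintended way, could beat $\gamma$ without corresponding to a balanced partition. This is precisely the role of the extra coordinate: tuning $p_{m+1}$ and $c_{m+1}$ must make any deviation from the intended structure incur a penalty that outweighs whatever savings it might offer elsewhere. A case analysis over the values $(b^j_{m+1})_{j=1}^k$ and the intersection pattern of the $I_j$'s, in the spirit of Proposition~\ref{PropUpperLowerBoundObj} and Lemma~\ref{LemAwayFromOpt}, should dispose of these cases. Applying the resulting reduction to \texttt{3-PARTITION} (where $t=m/3$) yields strong NP-hardness with $k=\frac{n+5}{3}$, while applying it to \mwp{t} for any fixed constant $t\ge 2$ yields weak NP-hardness for every fixed $k\ge 4$.
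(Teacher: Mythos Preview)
Your proposal follows essentially the same approach as the paper. Two refinements are worth noting. First, the extra coordinate's role is narrower than you suggest: the paper takes $c_{m+1}=2M$ huge and $p_{m+1}=M^{-6}$ tiny, so its sole purpose is to make $c\t\onevec$ so large that using $\onevec$ to cover any singleton $\xi^i$ already exceeds $\gamma$, forcing each $\xi^i$ to be covered by some intermediate $b\in B\setminus\{\zerovec,\onevec\}$. Second, you do not need a case analysis over overlapping $I_j$'s. The paper sidesteps this entirely: it defines a map $\pi:[m]\to B\setminus\{\zerovec,\onevec\}$ sending $i$ to an element of $\argmin\{c\t b:\xi^i\le b\in B\}$; the preimages $\pi^{-1}(b)$ automatically partition $[m]$, and since $i\in\pi^{-1}(b)$ implies $b_i=1$, one gets $c\t b\ge\sum_{i\in\pi^{-1}(b)}a_i$ regardless of any overlap in the supports of the $b$'s. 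The QM--AM inequality (equivalently, your Cauchy--Schwarz) then applies directly to this partition, and integrality gives the required gap.
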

\begin{corollary}
For any fixed constant $k\geq 4$, the decision variant of the binary Pareto cover problem restricted to instances with cover size $k$ is weakly NP-complete.	
\end{corollary}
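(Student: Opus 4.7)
The strategy is to generalize the $k=3$ reduction to a reduction from \parti with $t$ partition classes to the binary Pareto cover problem with cover size $k=t+2$ and $n=m+1$ coordinates. Given an instance $(a_1,\dots,a_m,t)$ of \parti with $s \defn \sum_i a_i$ divisible by $t$, I would set $c_i \defn a_i$ for $i\in [m]$ and $p_i \defn a_i/(Ms)$ for $i\in[m]$, where $M$ is a polynomially large positive integer (e.g.\ $M\defn 4s^2$). The extra coordinate $m+1$ would carry values $p_{m+1}, c_{m+1}$ serving as a technical buffer; the precise choice (for instance, a small positive $p_{m+1}$ with suitable $c_{m+1}$) would be made so as to preserve the partition structure at the optimum. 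Finally, $\gamma$ would be placed in the middle of the cost gap established below.

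The target optimum cover is $B^* = \{\zerovec, \chi^{I_1}, \dots, \chi^{I_t}, \onevec\}$, where $(I_1,\dots,I_t)$ is a balanced $t$-way partition of $[m]$ and the $\chi^{I_j}$ are extended by $0$ in coordinate $m+1$. These $t+2$ vectors are distinct because, for a balanced partition of positive integers, each $I_j$ is a non-empty proper subset of $[m]$. As in the $k=3$ case, I would argue that every optimum has this structure: $\onevec$ is forced because $p_i>0$ for all $i\in[m]$ creates a support point that only $\onevec$ dominates; $\zerovec$ is preferred because $\mu_p(\{\zerovec\})$ is close to $1$ and $\zerovec$ covers $\zerovec$ at cost $0$, strictly cheaper than any alternative; and the remaining $t$ vectors $b^j$ may be taken to be characteristic vectors of subsets $I_j\subseteq[m]$ partitioning $[m]$, via the monotonicity reasoning already used for $k=3$ (adding $1$-entries to some $b^j$ only hurts, and any overlap among the $I_j$ may be eliminated without increasing the objective).

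With this structure, a direct computation analogous to the $k=3$ case yields
\[
\ev_{\mu_p}[c_B] = \bigl(1 - \mu_p(\{\zerovec\})\bigr)\, s - \frac{1}{Ms}\left(s^2 - \sum_{j=1}^t s_j^2\right) + O(M^{-2}),
\]
where $s_j \defn \sum_{i \in I_j} a_i$. Since $\sum_j s_j = s$ is fixed, $\sum_j s_j^2$ is minimized at $s_j = s/t$ for all $j$ (attaining value $s^2/t$), which is achievable if and only if a feasible \parti partition exists. By integrality of the $a_i$, any infeasible partition yields $\sum_j s_j^2 \geq s^2/t + 2$ (the smallest balanced perturbation $(s/t+1, s/t-1)$ already contributes $2$), producing an objective gap of at least $2/(Ms) - O(M^{-2})$. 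For $M$ polynomially large, the $O(M^{-2})$ error is dominated, and placing $\gamma$ in the middle of this gap yields the desired biconditional. All parameters involved have polynomial binary encoding length, so the reduction runs in polynomial time.

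The main obstacle is making the structural claim rigorous: one must rule out exotic optima (e.g.\ without $\zerovec$, with $\onevec$ replaced by a tighter top-vector, or with overlapping and/or shrunken $I_j$), and control the lower-order correction arising from events in which $x$ has two or more coordinates equal to $1$. This would be a case analysis along the lines of Claim~\ref{ClaimBundleToPartition} in the $k=3$ proof but with more configurations to enumerate. The choice $n=m+1$ provides the technical buffer used in this analysis and, more importantly, ensures that instantiating the reduction on \texttt{3-PARTITION} (where $m=3t$) produces $k=(n+5)/3$, giving strong NP-hardness precisely at that value of $k$.
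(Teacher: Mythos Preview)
Your overall plan matches the paper's: reduce from $t$-way \parti{} to the binary Pareto cover problem with $k=t+2$ and $n=m+1$, and exploit that $\sum_j s_j^2$ is minimized by a balanced partition. But there is a real gap in how you treat coordinate $m+1$. You call it a ``technical buffer'' and then assert that the middle vectors can be taken to be characteristic vectors of a \emph{partition} of $[m]$ ``via the monotonicity reasoning already used for $k=3$''. That reasoning gives you at most disjointness; it does not give coverage. Indeed, ``adding $1$-entries only hurts'' pushes toward \emph{smaller} $I_j$, the opposite of what you need. The paper's actual mechanism is to set $c_{m+1}$ \emph{very large} (namely $c_{m+1}=2M$ with $M=13s^2$) and $p_{m+1}$ tiny (namely $M^{-6}$). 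Then $c^\top\onevec\geq 2M$, so if any unit vector $\xi^i$ with $i\in[m]$ were covered only by $\onevec$, the single term $\mu_p(\{\xi^i\})\cdot c^\top\onevec$ would already exceed $\gamma$. This forces every $i\in[m]$ to be handled by some $b\in B\setminus\{\zerovec,\onevec\}$, which is exactly what guarantees the partition structure. Without it, on a ``no'' instance one can leave an element to $\onevec$ and potentially beat the best genuine partition, wiping out your $2/(Ms)$ gap. The paper also sidesteps your overlap-elimination discussion by simply defining $\pi(i)\in\arg\min\{c^\top b:b\in B,\ \xi^i\le b\}$ and working with the partition $(\pi^{-1}(b))_{b\in B\setminus\{\zerovec,\onevec\}}$.

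Finally, the corollary claims weak NP-\emph{completeness}, so you also need to invoke membership in NP from Section~\ref{secDiscreteInNP}; your proposal addresses only hardness.
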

\begin{proof}
Membership in NP has been shown in Section~\ref{secDiscreteInNP}. For weak NP-hardness, observe that Theorem~\ref{TheoReductionkgeq4} provides a polynomial time reduction from the weakly NP-complete \mwp{k-2} problem to the binary Pareto cover problem restricted to instances with cover size $k$.	
\end{proof}
\begin{corollary}
	The binary Pareto cover problem, restricted to instances where $k=\frac{n+5}{3}$, is strongly NP-hard.
	\end{corollary}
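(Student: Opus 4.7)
The plan is to obtain this as an immediate consequence of Theorem~\ref{TheoReductionkgeq4} by specializing the source problem to \texttt{3-PARTITION}, which is the strongly NP-complete variant of \parti with $t=\frac{m}{3}$ (see \cite{garey1990computers}). Recall that an instance of \texttt{3-PARTITION} is given by positive integers $a_1,\dots,a_m$ (with $m$ divisible by $3$), and the \parti parameters are $t=\frac{m}{3}$ and this $m$.

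First, I would apply the reduction guaranteed by Theorem~\ref{TheoReductionkgeq4} to such a \texttt{3-PARTITION} instance. This produces, in polynomial time, an instance of the binary Pareto cover problem with $n = m+1$ coordinates and cover size $k = t+2 = \tfrac{m}{3}+2$. A direct calculation then verifies the desired relation between $n$ and $k$:
\[
\frac{n+5}{3} \;=\; \frac{(m+1)+5}{3} \;=\; \frac{m+6}{3} \;=\; \frac{m}{3}+2 \;=\; k,
\]
so the produced instance indeed lies in the restricted class $k=\frac{n+5}{3}$.

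Second, since the reduction of Theorem~\ref{TheoReductionkgeq4} is polynomial time and the encoding lengths of the produced probabilities, costs, and threshold $\gamma$ are polynomially bounded in the \emph{unary} input size of the \texttt{3-PARTITION} instance (this is the crucial property that distinguishes strong from weak NP-hardness, but it is already implicit in the statement of Theorem~\ref{TheoReductionkgeq4}, which guarantees a polynomial-time reduction from all of \parti), hardness transfers: a polynomial-time algorithm for the binary Pareto cover problem restricted to $k=\frac{n+5}{3}$ would yield a polynomial-time algorithm for \texttt{3-PARTITION}, contradicting its strong NP-hardness. This gives strong NP-hardness of the restricted binary Pareto cover problem.

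The only potential obstacle is the verification that the reduction of Theorem~\ref{TheoReductionkgeq4} preserves strong NP-hardness, i.e., that the numerical values it produces are polynomially bounded (rather than merely of polynomial encoding length) in the unary size of the input \parti instance. This is a routine check of the construction in the theorem, but it is the one place where a subtlety could arise; once confirmed, the corollary follows immediately by the above calculation.
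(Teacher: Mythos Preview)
Your proposal is correct and takes essentially the same approach as the paper: specialize Theorem~\ref{TheoReductionkgeq4} to \texttt{3-PARTITION} (i.e., $t=\frac{m}{3}$) and verify that $k=t+2=\frac{m}{3}+2=\frac{(n-1)}{3}+2=\frac{n+5}{3}$. The paper's own proof is even terser and does not explicitly flag the point you raise about the reduction preserving \emph{strong} NP-hardness; your caveat there is appropriate and easily discharged by inspecting the construction (all costs, numerators, and denominators are polynomial in $\sum_i a_i$).
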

\begin{proof}
The \texttt{3-PARTITION} problem corresponds to the restriction of the \parti problem to instances where $t=\frac{m}{3}$. Theorem~\ref{TheoReductionkgeq4} hence provides a reduction from the strongly NP-complete \texttt{3-PARTITION} to the Pareto cover problem, restricted to instances where $k=t+2=\frac{m}{3}+2=\frac{n-1}{3}+2=\frac{n+5}{3}$.
\end{proof}
The reduction from \mwp{t} to the binary Pareto cover problem with $k=t+2$ and $n=m+1$ is very similar to the one presented in the previous section: For each $i=1,\dots,m$, we set $c_i:=a_i$ and choose $p_i:=\frac{a_i}{L}$, where $L$ is a very large constant. However, in contrast to what we did for $k=3$, we add an $m+1$-st coordinate, for which we set $p_{m+1}$ to a tiny value, e.g., $L^{-2}$ whilst $c_{m+1}$ is chosen to be extremely large.

 This construction is motivated as follows: First of all, by choosing all probabilities to be tiny, $\mu_p(\{\zerovec\})$ becomes very close to $1$ and by essentially the same argument as in Section~\ref{SecHardnessKequal3}, this ensures that any cover $B$ for which $\ev_{\mu_p}[c_B]$ is smaller than a certain threshold must contain $\zerovec$. But for any cover $B$ with $\zerovec\in B$, $\ev_{\mu_p}[c_B]$ is dominated by the costs of covering the first $m$ unit vectors $\xi^i$, $i=1,\dots,m$. This is because for $L$ large enough, $\mu_p(\{x\in \{0,1\}^{m+1}: \onevec\t x\geq 2 \vee x_{m+1}=1\})$ is by an order of magnitude smaller than each of the $\mu_p(\{\xi^i\})$. While including $\onevec$ into the cover is mandatory to ensure feasibility as all probabilities are positive, for any $i=1,\dots,m$, no optimum solution will contain more than one $b\neq \onevec$ such that $b_i=1$: If there were more than one, we would only need the cheaper one to cover $\xi^i$ and can set $b_i=0$ for the other one, decreasing the costs. On the other hand, if we choose $c_{n+1}$ sufficiently large, then we are  forced to cover each of the $\xi^i$ by some $b$ with $b_{m+1}\neq 1$ (and hence, $b_{m+1}=0$) to stay below a cost threshold of, say, $1$ again. But this means that if we have a cover $B$ with $\ev_{\mu_p}[c_B]<1$, then $(I_b:=\{i:b_i=1\})_{b\in B\setminus\{\zerovec,\onevec\}}$ yields a $t$-partition of $[m]$. Each $ b\in B\setminus\{\zerovec,\onevec\}$ covers all $\xi^i$ with $i\in I_b$. Hence, we obtain
 \[\ev_{\mu_p}[c_B]\approx \sum_{i=1}^m \mu_p(\{\xi^i\})\cdot\min_{b\in B: \xi^i\leq b}c\t b = \sum_{b\in B\setminus\{\zerovec,\onevec\}}\sum_{i\in I_b} p_i\cdot \sum_{i\in I_b}a_i=\frac{1}{L}\cdot\sum_{b\in B\setminus\{\zerovec,\onevec\}}\left(\sum_{i\in I_b}a_i\right)^2.\] By convexity of the function $x\mapsto x^2$, the rightmost term attains its minimum possible value if and only if $(I_b)_{b\in B\setminus\{\zerovec,\onevec\}}$ defines a solution to \mwp{t}, that is, if the \mwp{t} instance is a ``yes'' instance.
 
The remainder of this section contains a formal proof of Theorem~\ref{TheoReductionkgeq4}.
	Let an instance $t$, $(a_i)_{i=1}^m$ of \parti be given and let $M:=13\cdot\left(\sum_{i=1}^m a_i\right)^2.$ Consider the instance of the decision variant of the binary Pareto cover problem given by $k=t+2$, $n=m+1$, $p_i:=\frac{a_i}{M^4}$ and $c_i:=a_i$, for $i=1,\dots,m$, $p_{m+1}:=\frac{1}{M^6}$, $c_{m+1}:= 2M$ and $\gamma:=\frac{1}{M^4}\cdot\frac{\left(\sum_{i=1}^m a_i\right)^2}{t}+\frac{6}{M^5}$.
Note that the encoding lengths of $M$, and, hence, $(p_i)_{i=1}^{m+1}$, $(c_i)_{i=1}^{m+1}$ and $\gamma$ are polynomial in the input size. Therefore, it remains to prove the following two claims:
\begin{claim}
Let $(I_j)_{j=1}^{t}$ be a partition of $[m]$ such that $\sum_{i\in I_j}a_i =\frac{1}{t}\cdot\sum_{i=1}^m a_i$ for all $j=1,\dots,t$. For $j=1,\dots,t$, let $b^j\in\{0,1\}^{m+1}$ be given by $b^j_i=1 \Leftrightarrow i\in I_j$ and let $B:=\{\zerovec,\onevec\}\cup\{b^j,j=1,\dots,t\}$. Then $\ev_{\mu_p}[c_B]\leq \gamma.$\label{Claimkm1PartitionToBundles}
	\end{claim}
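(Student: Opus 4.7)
The plan is to directly evaluate $\ev_{\mu_p}[c_B]$ by decomposing $\{0,1\}^{m+1}$ into three classes: the zero vector, the unit vectors $\xi^1,\dots,\xi^m$, and everything else. Writing $S \defn \sum_{i=1}^m a_i$, I would first pin down the coverage cost $c_B(x)$ on a representative from each class. The zero vector is covered by $\zerovec \in B$ at cost $0$. For $x = \xi^i$ with $i \in [m]$, let $j \in [t]$ be the unique index with $i \in I_j$; then $\xi^i \le b^j$ and $c\t b^j = \sum_{l \in I_j} a_l = S/t$, so that $c_B(\xi^i) \le S/t$. For every remaining $x$, I would simply use the bound $c_B(x) \le c\t\onevec = S + 2M$, relying on the trivial cover by $\onevec$.

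Next I would sum the contributions. The unit-vector part, using $\mu_p(\{\xi^i\}) \le p_i$, yields
\[
\sum_{i=1}^m \mu_p(\{\xi^i\}) \cdot c_B(\xi^i) \;\le\; \frac{S}{t} \sum_{i=1}^m p_i \;=\; \frac{S}{t}\cdot\frac{S}{M^4} \;=\; \frac{1}{M^4}\cdot\frac{S^2}{t},
\]
which matches exactly the first term of $\gamma$. It then remains to show that the mass on the remaining $x$'s contributes at most $\frac{6}{M^5}$. These leftover points lie in $\{x : x_{m+1} = 1\} \cup \{x : \sum_{i=1}^m x_i \ge 2\}$, and a union bound controls the total $\mu_p$-measure by $p_{m+1} + \tfrac{1}{2}\bigl(\sum_{i=1}^m p_i\bigr)^2 = \frac{1}{M^6} + \frac{S^2}{2M^8}$.

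Multiplying this by the uniform upper bound $S + 2M$ on the cost produces four terms in powers of $M$ and $S$. Since $M = 13S^2$ forces $S \le M$ and $S^2 = M/13$, I expect each of the four terms to be at most $\frac{2}{M^5}$, with the dominant one being $2M \cdot p_{m+1} = \frac{2}{M^5}$ and the other three of order $\frac{1}{M^6}$ or smaller. Collecting everything gives $\ev_{\mu_p}[c_B] \le \frac{S^2}{tM^4} + \frac{6}{M^5} = \gamma$, as required. The only real obstacle I anticipate is keeping this polynomial bookkeeping in $M$ and $S$ clean; the structural content of the argument is simply that the balanced-partition property $\sum_{i\in I_j} a_i = S/t$ makes the dominant cost contribution equal the leading term of $\gamma$, while the large multiplicative slack built into $M = 13S^2$ is what absorbs every lower-order error.
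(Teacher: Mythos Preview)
Your approach is correct and essentially identical to the paper's: both decompose $\{0,1\}^{m+1}$ into $\{\zerovec\}$, the unit vectors $\xi^1,\dots,\xi^m$, and a remainder covered crudely by $\onevec$, and then verify that the remainder contributes at most $6/M^5$. The only imprecision is that the term $S/M^6$ is of order $M^{-5.5}$ rather than $M^{-6}$, but your bookkeeping still closes cleanly once you use $S+2M\le 3M$ and $\frac{1}{M^6}+\frac{S^2}{2M^8}\le \frac{2}{M^6}$ (via $S^2=M/13$), giving exactly $6/M^5$ as in the paper.
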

\begin{claim}
If $B$ is a cover such that $\ev_{\mu_p}[c_B]\leq \gamma$, then $(a_i)_{i=1}^m$ is a ``yes'' instance of the \parti problem.\label{ClaimBundlesTokm1Partition}
\end{claim}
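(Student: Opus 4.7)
The plan is to show that any Pareto cover $B$ of size $t+2$ achieving $\ev_{\mu_p}[c_B]\leq\gamma$ must encode a valid solution to the \parti instance. Throughout, let $A\defn \sum_{i=1}^m a_i$ and write $\xi^i\in\{0,1\}^{m+1}$ for the $i$-th standard unit vector. Since $\mu_p$ is supported on $\{0,1\}^{m+1}$, one may assume $B\subseteq\{0,1\}^{m+1}$, and positivity of every $p_i$ forces $\onevec\in B$. My first step is to prove $\zerovec\in B$: if not, every $b\in B$ would be nonzero and therefore satisfy $c\t b\geq 1$, hence $\ev_{\mu_p}[c_B]\geq \mu_p(\{\zerovec\})\geq 1-\sum_{l=1}^{m+1}p_l \geq 1-\tfrac{2A}{M^4}$, vastly exceeding $\gamma$. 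Consequently $B=\{\zerovec,\onevec,b^1,\dots,b^t\}$ with pairwise distinct $b^j\in\{0,1\}^{m+1}\setminus\{\zerovec,\onevec\}$.

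The second step is to rule out cheap coverings of the unit vectors $\xi^i$, $i\in[m]$, that rely on the expensive $(m+1)$-st coordinate. For each such $i$ define $f(i)\defn\min_{b\in B,\,b\geq\xi^i}c\t b$. If the minimiser is $\onevec$ or a $b^j$ with $b^j_{m+1}=1$, then $f(i)\geq 2M$, and the single-term contribution gives
\[
  \ev_{\mu_p}[c_B]\;\geq\;\mu_p(\{\xi^i\})\cdot f(i)\;\geq\;\tfrac{p_i}{2}\cdot 2M\;=\;\tfrac{a_i}{M^3},
\]
which already exceeds $\gamma=\tfrac{A^2}{tM^4}+\tfrac{6}{M^5}$ by our choice $M=13A^2$ (since then $a_iM\geq 13A^2$ dominates both $A^2/t$ and $6/M$). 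Hence for every $i\in[m]$ some $b^j\in B$ with $b^j_i=1$ and $b^j_{m+1}=0$ exists, and I can define $\pi(i)\in[t]$ as an index achieving the minimum of $\sum_{l\in I_j}a_l$ among such $j$, where $I_j\defn\{l\in[m]:b^j_l=1\}$. Setting $K_j\defn\pi^{-1}(j)\subseteq I_j$ yields a partition $(K_j)_{j=1}^t$ of $[m]$ with $f(i)=\sum_{l\in I_{\pi(i)}}a_l\geq \sum_{l\in K_{\pi(i)}}a_l$.

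The final step is to lower-bound the expected cost in terms of this partition and derive a contradiction unless it is balanced. Using $\mu_p(\{\xi^i\})\geq p_i\bigl(1-\tfrac{2A}{M^4}\bigr)$ and the inequality for $f(i)$ just established,
\[
  \ev_{\mu_p}[c_B]\;\geq\;\bigl(1-\tfrac{2A}{M^4}\bigr)\cdot\frac{1}{M^4}\sum_{i=1}^m a_i\!\sum_{l\in K_{\pi(i)}}\!a_l\;=\;\bigl(1-\tfrac{2A}{M^4}\bigr)\cdot\frac{1}{M^4}\sum_{j=1}^t\Bigl(\sum_{l\in K_j}a_l\Bigr)^{\!2}.
\]
Since $\sum_j\sum_{l\in K_j}a_l=A$, convexity gives $\sum_j(\sum_{l\in K_j}a_l)^2\geq A^2/t$, with equality precisely when $\sum_{l\in K_j}a_l=A/t$ for every $j$. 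If equality fails, the integrality of all $a_i$ (and of $A/t$) forces some partial sum to deviate from $A/t$ by at least $1$, upgrading the bound to $\sum_j(\sum_{l\in K_j}a_l)^2\geq A^2/t+1$. A direct arithmetic check using $M=13A^2$ then shows that this strengthened bound, combined with the multiplicative factor $1-\tfrac{2A}{M^4}$, still strictly exceeds $\gamma$, contradicting the hypothesis. Hence $(K_j)_{j=1}^t$ is an equal $t$-partition, witnessing that $(a_i)_{i=1}^m$ is a ``yes'' instance. The delicate part of the argument is this very last calibration: one must verify that the ``$+1$'' gained from integrality, after being diluted by the $(1-\tfrac{2A}{M^4})$ correction, genuinely dominates the additive slack $\tfrac{6}{M^5}$ built into $\gamma$, which is precisely what the choice $M=13A^2$ is engineered to guarantee.
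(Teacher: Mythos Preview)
Your proposal is correct and follows essentially the same three-step skeleton as the paper: force $\zerovec\in B$, show every $\xi^i$ is covered by some cheap $b^j$, assign each $i$ to a minimising $b^j$ to obtain a partition, and combine the QM--AM inequality with integrality. The only cosmetic differences are that you additionally insist on $b^j_{m+1}=0$ in the second step (the paper merely rules out $\onevec$ as the minimiser), you use the slightly sharper correction factor $1-\tfrac{2A}{M^4}$ in place of the paper's $1-\tfrac{1}{M^3}$, and you defer the final numerical comparison to a ``direct arithmetic check'' whereas the paper spells it out.
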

\begin{proof}[Proof of Claim~\ref{Claimkm1PartitionToBundles}]
	We have
	\begin{align*}
	\ev_{\mu_p}[c_B]&\leq\mu_p(\{\zerovec\})\cdot\min_{b\in B} c\t b+\sum_{i=1}^m \mu_p(\{\xi^i\})\cdot\min_{b\in B: \xi^i\leq b}c\t b+\left(1-\mu_p(\{\zerovec\})-\sum_{i=1}^m\mu_p(\{\xi^i\})\right)\cdot c\t\onevec\\
	&=\mu_p(\{\zerovec\})\cdot 0+\sum_{j=1}^{t}\sum_{i\in I_j}\mu_p(\{\xi^i\})\cdot\min_{b\in B: \xi^i\leq b}c\t b+\left(1-\mu_p(\{\zerovec\})-\sum_{i=1}^m\mu_p(\{\xi^i\})\right)\cdot c\t\onevec\\
	&\leq \sum_{j=1}^{t}\sum_{i\in I_j} p_i\cdot c\t b^j+\left(1-\prod_{i=1}^{m+1}(1-p_i)-\sum_{i=1}^m p_i\cdot\prod_{l\in[m+1]\setminus\{i\}} (1-p_l)\right)\cdot c\t\onevec\\
	&\stackrel{Prop.~\ref{PropProductBound}}{\leq}\sum_{j=1}^{t}\sum_{i\in I_j} p_i\cdot \sum_{i\in I_j} a_i+\left(\sum_{i=1}^{m+1} p_i-\sum_{i=1}^m p_i\cdot\prod_{l\in[m+1]} (1-p_l)\right)\cdot c\t\onevec\\
	&=\sum_{j=1}^{t}\sum_{i\in I_j} p_i\cdot \sum_{i\in I_j} a_i+\left(p_{m+1}+\sum_{i=1}^{m} p_i\cdot\left(1-\prod_{l\in[m+1]} (1-p_l)\right)\right)\cdot\sum_{i=1}^{m+1}a_i\\
	&\stackrel{Prop.~\ref{PropProductBound}}{\leq}\sum_{j=1}^{t}\sum_{i\in I_j} p_i\cdot \sum_{i\in I_j} a_i+\left(p_{m+1}+\left(\sum_{i=1}^{m+1}p_{i}\right)^2\right)\cdot\sum_{i=1}^{m+1}a_i\\
	&\leq\frac{1}{M^4}\cdot\sum_{j=1}^{t}\left(\sum_{i\in I_j}a_i\right)^2+\left(\frac{1}{M^6}+\left(\frac{1}{M^4}\cdot\sum_{i=1}^m a_i+\frac{1}{M^6}\right)^2\right)\cdot \left(\sum_{i=1}^{m}a_i+2M\right)\\
	&\leq \frac{1}{M^4}\cdot\sum_{j=1}^{t}\left(\frac{\sum_{i=1}^m a_i}{t}\right)^2+\left(\frac{1}{M^6}+\left(\frac{1}{2\cdot M^3}+\frac{1}{M^6}\right)^2\right)\cdot 3M\\
	&\leq \frac{1}{M^4}\cdot\sum_{j=1}^{t}\left(\frac{\sum_{i=1}^m a_i}{t}\right)^2+\left(\frac{1}{M^6}+\left(\frac{1}{M^3}\right)^2\right)\cdot 3M\\
		&\leq \frac{1}{M^4}\cdot\frac{\left(\sum_{i=1}^m a_i\right)^2}{t}+\frac{2}{M^6}\cdot 3M=\frac{1}{M^4}\cdot\frac{\left(\sum_{i=1}^n a_i\right)^2}{t}+\frac{6}{M^5}=\gamma.
	\end{align*}
\end{proof}
Before we dive into the proof of Claim~\ref{ClaimBundlesTokm1Partition}, we prove two inequalities that we will need later.
\begin{equation}
\prod_{i=1}^{m+1}(1-p_i)\stackrel{Prop.~\ref{PropProductBound}}{\geq} 1-\sum_{i=1}^{m}p_i-p_{m+1}=1-\frac{\sum_{i=1}^m a_i}{M^4}-\frac{1}{M^6}\geq 1-2\cdot\frac{1}{2\cdot M^3}=1-\frac{1}{M^3}>\frac{1}{2}\label{EqBoundPropZero}
\end{equation}
\begin{equation}
\frac{1}{2}>\frac{1}{M^3}\geq\frac{1}{2\cdot\left(\sum_{i=1}^m a_i\right)^2}\cdot \frac{1}{M^3}\cdot\frac{\left(\sum_{i=1}^m a_i\right)^2}{t}+\frac{1}{12}\cdot\frac{6}{M^3}\geq \frac{1}{M^4}\cdot\frac{\left(\sum_{i=1}^m a_i\right)^2}{t}+\frac{6}{M^5}=\gamma\label{EqTrivialBoundGamma}
\end{equation}
\begin{proof}[Proof of Claim~\ref{ClaimBundlesTokm1Partition}]
	Let $B$ be a cover such that $\ev_{\mu_p}[c_B]\leq\gamma$. We can w.l.o.g. assume that $B\subseteq\{0,1\}^n$. As all $p_i$ are strictly positive, feasibility implies that $\onevec\in B$. We want to show that also $\zerovec\in B$. Assume towards a contradiction that $\zerovec\not\in B$. As all $c_i$ are positive integers, this implies that $c\t b\geq 1$ for all $b\in B$, leading to \[\ev_{\mu_p}[c_B]\geq\mu_p(\{0,1\}^{m+1})\cdot\min_{b\in B}c\t b\geq 1\stackrel{\eqref{EqTrivialBoundGamma}}{>} \gamma,\] a contradiction.\\
	Next, we prove that for every $i\in[m]$, there is $b\in B\setminus\{\onevec,\zerovec\}$ such that $\xi^i\leq b$, i.e., $b_i=1$. Again, assume towards a contradiction that this was not the case. Then there is $i\in[m]$ such that the only element of $B$ covering $\xi^i$ is $\onevec$. Consequently,
	\[\ev_{\mu_p}[c_B]\geq\mu_p(\{\xi^i\})\cdot c\t\onevec = p_i\cdot\prod_{l\in[m+1]\setminus\{i\}}(1-p_l)\cdot\left(\sum_{i=1}^m a_i + 2\cdot M\right)\stackrel{\eqref{EqBoundPropZero}}{\geq} \frac{a_i}{M^4}\cdot\frac{1}{2}\cdot 2\cdot M\geq \frac{1}{M^3}\stackrel{\eqref{EqTrivialBoundGamma}}{>}\gamma,\] contradicting our assumptions. Hence, for each $i\in[m]$, there is $b\in B\setminus\{\onevec,\zerovec\}$ with $b_i=1$. As a consequence, we can pick a map $\pi:[m]\rightarrow B\setminus\{\zerovec,\onevec\}$ mapping $i$ to an element of $\mathrm{argmin}\{c\t b: b\in B, \xi^i\leq b\}$. Not that as all $c_i$ are strictly positive, $\onevec$ will never attain the minimum. Now, by definition, the sets $\pi^{-1}(b)$, $b\in B\setminus\{\zerovec,\onevec\}$ form a partition of $[m]$ into $t$ parts. We claim that this partition certifies that $(a_i)_{i=1}^m$ is a ``yes'' instance, meaning that we have $\sum_{i\in\pi^{-1}(b)} a_i=\frac{1}{t}\cdot\sum_{i=1}^m a_i$ for all $b\in B\setminus\{\zerovec,\onevec\}$. To this end, we observe that
	\begin{align*}
	\gamma &\geq \ev_{\mu_p}[c_B]\geq \sum_{i=1}^m\mu_p(\{\xi^i\})\cdot \min_{b\in B: \xi^i\leq b} c\t b\\
	&\geq \sum_{i=1}^m p_i\cdot\prod_{l=1}^{m+1}(1-p_l)\cdot c\t\pi(i)\\
	&= \prod_{l=1}^{m+1}(1-p_l)\cdot\sum_{b\in B\setminus\{\zerovec,\onevec\}} \sum_{i\in\pi^{-1}(b)}p_i\cdot c\t b\\
	&\stackrel{i\in \pi^{-1}(b)\Rightarrow b_i=1}{\geq} \prod_{l=1}^{m+1}(1-p_l)\cdot\sum_{b\in B\setminus\{\zerovec,\onevec\}} \sum_{i\in\pi^{-1}(b)}p_i\cdot \sum_{i\in\pi^{-1}(b)}a_i\\
	&=\frac{1}{M^4}\cdot \prod_{l=1}^{m+1}(1-p_l)\cdot\sum_{b\in B\setminus\{\zerovec,\onevec\}}\left(\sum_{i\in\pi^{-1}(b)}a_i\right)^2\\
	&\stackrel{\eqref{EqBoundPropZero}}{\geq}\left(1-\frac{1}{M^3}\right)\cdot \frac{1}{M^4}\cdot\sum_{b\in B\setminus\{\zerovec,\onevec\}}\left(\sum_{i\in\pi^{-1}(b)}a_i\right)^2.
	\end{align*}
	By the inequality between arithmetic and quadratic mean, we have
	\[\sum_{b\in B\setminus\{\zerovec,\onevec\}}\left(\sum_{i\in\pi^{-1}(b)}a_i\right)^2\geq t\cdot\left(\frac{\sum_{b\in B\setminus\{\zerovec,\onevec\}}\sum_{i\in\pi^{-1}(b)}a_i}{t}\right)^2=\frac{\left(\sum_{i=1}^m a_i\right)^2}{t},\]
	and this is tight if and only if $\sum_{i\in\pi^{-1}(b)}a_i=\frac{1}{t}\cdot\sum_{i=1}^m a_i$ for all $b\in B\setminus\{\zerovec,\onevec\}$. If this holds, we are done. Otherwise, by integrality of the left and the right-hand side (recall that $\sum_{i=1}^m a_i$ is divisible by $t$), we can infer that
	\[\sum_{b\in B\setminus\{\zerovec,\onevec\}}\left(\sum_{i\in\pi^{-1}(b)}a_i\right)^2\geq \frac{\left(\sum_{i=1}^m a_i\right)^2}{t}+1.\]
	This results in \begin{align*}\gamma &\geq\ev_{\mu_p}[c_B]\geq\left(1-\frac{1}{M^3}\right)\cdot \frac{1}{M^4}\cdot \left(\frac{\left(\sum_{i=1}^m a_i\right)^2}{t}+1\right)\\&=\frac{1}{M^4}\cdot\frac{\left(\sum_{i=1}^m a_i\right)^2}{t}+\frac{1}{M^4}-\frac{1}{M^7}\cdot\frac{\left(\sum_{i=1}^m a_i\right)^2}{t}-\frac{1}{M^7}\\
	&\geq \frac{1}{M^4}\cdot\frac{\left(\sum_{i=1}^m a_i\right)^2}{t}+\frac{1}{M^4}-2\cdot\frac{1}{2\cdot M^6}\\
	&\geq  \frac{1}{M^4}\cdot\frac{\left(\sum_{i=1}^m a_i\right)^2}{t}+\frac{1}{2\cdot M^4}>\frac{1}{M^4}\cdot\frac{\left(\sum_{i=1}^m a_i\right)^2}{t}+\frac{6}{M^5}=\gamma, \end{align*}
	a contradiction. This completes the proof.
\end{proof}
We conclude this section by remarking that $k=\frac{n+5}{3}$ is not the only cover size for which Theorem~\ref{TheoReductionkgeq4} proves NP-hardness. In fact, by performing reductions from the \texttt{3-PARTITION} problem, one can show that for any integer $p\geq 3$, the problem of deciding whether, given positive integers $a_1,\dots,a_m$ such that $m$ is divisible by $p$ and $\sum_{i=1}^m a_i$ is divisible by $\frac{m}{p}$, there exists a partition of $[m]$ into $\frac{m}{p}$ sets $(I_j)_{j=1}^{\frac{m}{p}}$ such that $\sum_{i\in I_j} a_i = \frac{p}{m}\cdot\sum_{i=1}^m a_i$ for all $j=1,\dots,\frac{m}{p}$, is NP-complete. This then shows NP-hardness of the Pareto cover problem restricted to instances where $k=\frac{n-1}{p}+2$. Finally, it seems likely that by adding additional numbers of value $\frac{p+1}{m}\cdot\sum_{i=1}^m a_i$ to the instance, one can cover some cases ``between'' $k=\frac{n-1}{p+1}+2$ and $k=\frac{n-1}{p}+2$. However, as this is not the main focus of our paper, we omit the details here.
\section{Proofs omitted from Section~\ref{SecGeneralToDiscrete}}
\label{secAppendixGeneralToDiscrete}

\PropExpectationBoundsOpt*
\begin{proof}
Consider the random variable $\bar{c}:[0,1]^n\rightarrow\mathbb{R}_{\geq 0},x\mapsto \sum_{i=1}^n c_i\cdot x_i$. Then $\bar{c}(x)\leq c_B(x)$ for all $x\in [0,1]^n$, implying that \[\ev_{\prod_{i=1}^n\mu_i}[c_B]\geq \ev_{\prod_{i=1}^n\mu_i}[\bar{c}]=\sum_{i=1}^n c_i\cdot\ev_{X_i\sim\mu_i}[X_i]\geq \alpha\cdot \sum_{i=1}^n c_i.\]
\end{proof}
\PropSizeMeps*
\begin{proof}
We have \begin{align*}M_\epsilon &= \lceil \log_{1+\epsilon} ((\epsilon\cdot\alpha)^{-1})\rceil\leq \log_{1+\epsilon}((\epsilon\cdot\alpha^{-1})+1 =\frac{\log(\epsilon^{-1})+\log(\alpha^{-1})}{\log(1+\epsilon)}+1\\&\stackrel{(*)}{\leq} 2\cdot\frac{\log(\epsilon^{-1})+\log(\alpha^{-1})}{\epsilon}+1\leq2\cdot\frac{1+\log(\alpha^{-1})}{\epsilon^2}+1  \in\mathcal{O}\left(\frac{\log(\alpha^{-1})}{\epsilon^2}\right),\end{align*}
	where the inequality marked ($*$) follows from the fact that \[\log(1+\epsilon)\geq \ln(1+\epsilon)=\ln(1)+\int_{1}^{1+\epsilon} \frac{1}{x}dx \geq \ln(1)+\frac{\epsilon}{2}=\frac{\epsilon}{2}.\]
\end{proof}
\LemGoodDiscretizedSolution*
\begin{proof}
Pick an optimum solution $B=(b^j)_{j=1}^k$ and define the $\epsilon$-discrete solution $\hat{B}=(\hat{b}^j)_{j=1}^k$ by rounding each value $b_i^j$ to the next largest element of $Q(\epsilon)$. Note that this cannot destroy the property that the set of uncovered elements in $[0,1]^n$ has measure $0$. We have \[\ev_{\prod_{i=1}^n\mu_i}(c_{\hat{B}})=\ev_{\prod_{i=1}^n\mu_i}(c_B)+\ev_{\prod_{i=1}^n\mu_i}(c_{\hat{B}}-c_B).\] By definition, we have $b^j\leq \hat{b}^j$ for each $j=1,\dots,k$, implying that for $x$ with $c_B(x)=c(b^j)$, we have \begin{align*}c_{\hat{B}}(x)&\leq c(\hat{b^j}) = \sum_{i=1}^n c_i\cdot \hat{b}^j_i\leq \sum_{i=1}^n c_i\cdot \max\{\epsilon\cdot\alpha,(1+\epsilon)\cdot b^j_i\}\leq (1+\epsilon)\cdot\sum_{i=1}^n c_i\cdot b^j_i+\alpha\cdot\epsilon\cdot\sum_{i=1}^n c_i\\&\stackrel{Prop.~\ref{PropExpectationBoundsOpt}}{\leq}(1+\epsilon)\cdot c(b^j)+\epsilon\cdot\mathrm{OPT}=(1+\epsilon)\cdot c_B(x)+\epsilon\cdot\mathrm{OPT}.\end{align*}
	This yields $c_{\hat{B}}-c_B\leq \epsilon\cdot c_B + \epsilon\cdot \mathrm{OPT}$ and, hence,
	\begin{align*}\ev_{\prod_{i=1}^n\mu_i}[c_{\hat{B}}]&=\ev_{\prod_{i=1}^n\mu_i}[c_B]+\ev_{\prod_{i=1}^n\mu_i}[c_{\hat{B}}-c_B]\leq \ev_{\prod_{i=1}^n\mu_i}[c_B]+\ev_{\prod_{i=1}^n\mu_i}[\epsilon\cdot c_B + \epsilon\cdot \mathrm{OPT}]\\&=(1+\epsilon)\cdot\ev_{\prod_{i=1}^n\mu_i}[c_B]+\epsilon\cdot\mathrm{OPT}=(1+2\epsilon)\cdot\mathrm{OPT}\leq \left(1+\frac{\gamma}{15}\right)\cdot\mathrm{OPT}. \end{align*}
\end{proof} 
\LemSimilarObjective*
\begin{proof}
First, note that $(\tilde{\mu}_i)_{i=1}^n$ is well-defined in that we do not divide by $0$ and each $\tilde{\mu}_i$ constitutes a discrete probability distribution on $Q(\epsilon)$. The first part follows from the fact that \[\sum_{l=0}^{M_\epsilon+1} \sigma_i(q_{l-1},q_l,\epsilon)\geq (1+\epsilon)^{-1}\cdot\sum_{l=0}^{M_\epsilon+1} \mu_i((q_{l-1},q_l]\cap [0,1])=(1+\epsilon)^{-1}\cdot\mu_i([0,1])=(1+\epsilon)^{-1},\] the second part from the normalization of the probabilities.\\ Also using the upper bound we have on the values $\sigma_i(q_{l-1},q_l,\epsilon)$, we obtain \[(1+\epsilon)^{-1}\leq \sum_{l=0}^{M_\epsilon+1} \sigma_i(q_{l-1},q_l,\epsilon)\leq (1+\epsilon).\]
	This implies
	\begin{align}
	(1+\epsilon)^{-2}\cdot \mu_i((q_{l-1},q_l]\cap[0,1])&\leq \frac{\sigma_i(q_{l-1},q_l,\epsilon)}{\sum_{t=0}^{M_\epsilon +1}\sigma_i(q_{t-1},q_t,\epsilon)}= \tilde{\mu}_i(\{q_l\})\notag\\&=\tilde{\mu}_i((q_{l-1},q_l]\cap[0,1])\notag\\&\leq (1+\epsilon)^2\cdot \mu_i((q_{l-1},q_l]\cap[0,1])\label{EqMeasuresClose}
	\end{align}
	Now, pick some $\epsilon$-discrete cover $B=(b^j)_{j=1}^k$. We first want to show that feasibility of $B$ w.r.t.\ $(\tilde{\mu}_i)_{i=1}^n$ implies feasibility w.r.t.\ $(\mu_i)_{i=1}^n$. We know that $B$ is feasible with respect to the discrete measures $(\tilde{\mu}_i)_{i=1}^n$ if and only if there is $j\in[k]$ such that $b^j\geq (q_{l_i})_{i=1}^n$, where $l_i:=\max\{l: p^i_l>0\}$. In particular, the set $U$ of uncovered points $x\in [0,1]^n$ is contained within $\bigcup_{i=1}^n \bigtimes_{t=1}^{i-1}[0,1]\times(q_{l_i},1]\times \bigtimes_{t=i+1}^n [0,1]$, and we know that $\tilde{\mu}_i((q_{l_i},1])=0$ for each $i$. But by \eqref{EqMeasuresClose}, this implies $\mu_i((q_{l_i},1])=0$ for all $i$. Hence, by definition of the product measure, $(\prod_{i=1}^n \mu_i)(U)=0$, so $B$ is feasible for $(\mu_i)_{i=1}^n$. For the other direction, note that any $\epsilon$-discrete cover for $(\mu_i)_{i=1}^n$ must contain $b$ with $b\geq (q_{l'_i})_{i=1}^n$, where $l'_i$ is maximum such that $\mu_i((q_{l'_i-1},q_{l'_i}])>0$. But by \eqref{EqMeasuresClose}, the values $l'_i$ and $l_i$ coincide for $i\in[n]$. This shows feasibility for $(\tilde{\mu}_i)_{i=1}^n$.
	
	Now, we want to show that
	\[\left(1+\frac{\gamma}{15}\right)^{-1}\cdot\ev_{\prod_{i=1}^n\mu_i}[c_B]\leq \ev_{\prod_{i=1}^n\tilde{\mu}_i}[c_B]\leq \left(1+\frac{\gamma}{15}\right)\cdot\ev_{\prod_{i=1}^n\mu_i}[c_B].\]
	In order to compute $\ev_{\prod_{i=1}^n\mu_i}[c_B]$ and $\ev_{\prod_{i=1}^n\tilde{\mu}_i}[c_B]$, our strategy is to partition $[0,1]^n$ into a finite family of (measurable) subsets on which $c_B$ is constant.
	To this end, let $\mathcal{I}:=\{0,\dots,M_\epsilon+1\}^n$, and, for $I=(l_i)_{i=1}^n\in\mathcal{I}$, define \[S_I:=\bigtimes_{i=1}^n (q_{l_i-1}, q_{l_i}]\cap [0,1].\] As $q_{-1}=-1$ and $q_{M_\epsilon+1}=1$, $[0,1]^n =\dot{\bigcup}_{I\in \mathcal{I}} S_I$ clearly defines a partition of $[0,1]^n$. Moreover, for $x\in[0,1]^n$ and $j\in[k]$, we have \[x\leq b^j\Leftrightarrow\forall i: x_i\leq b^j_i\stackrel{b^j\in Q(\epsilon)^n}{\Leftrightarrow} \forall i: \min\{q\in Q(\epsilon):q\geq x_i\}\leq b^j_i.\] But for any $x\in S_I$ with $I=(l_i)_{i=1}^n$, $\min\{q\in Q(\epsilon):q\geq x_i\}=q_{l_i}$, so the set of all $b^j$ that cover $x$ is the same for all $x\in S_I$ and in particular, the costs of a cheapest such vector agree. Hence, $c_B$ is constant on $S_I$. Denote the value $c_B$ assumes on $S_I$ by $c_I$. Then
	\begin{align*}
	(1+\epsilon)^{-2n}\cdot \ev_{\prod_{i=1}^n\mu_i}[c_B]&= (1+\epsilon)^{-2n}\cdot\sum_{I=(l_i)_{i=1}^n\in\mathcal{I}} \prod_{i=1}^n\mu_i((q_{l_i-1},q_{l_i}]\cap[0,1])\cdot c_I\\
	&= \sum_{I=(l_i)_{i=1}^n\in\mathcal{I}} \prod_{i=1}^n(1+\epsilon)^{-2}\cdot\mu_i((q_{l_i-1},q_{l_i}]\cap[0,1])\cdot c_I\\
	&\stackrel{\eqref{EqMeasuresClose}}{\leq} \underbrace{\sum_{I=(l_i)_{i=1}^n\in\mathcal{I}} \prod_{i=1}^n\tilde{\mu}_i((q_{l_i-1},q_{l_i}]\cap[0,1])\cdot c_I}_{=\ev_{\prod_{i=1}^n\tilde{\mu}_i}[c_B]}\\
	&\stackrel{\eqref{EqMeasuresClose}}{\leq}\sum_{I=(l_i)_{i=1}^n\in\mathcal{I}} \prod_{i=1}^n(1+\epsilon)^{2}\cdot\mu_i((q_{l_i-1},q_{l_i}]\cap[0,1])\cdot c_I\\
	&=(1+\epsilon)^{2n}\cdot \ev_{\prod_{i=1}^n\mu_i}[c_B].
	\end{align*}
	By our choice of $\epsilon=\frac{\gamma}{60n}$, we obtain
	\[(1+\epsilon)^{2n}\stackrel{\eqref{EqOneplusX}}{\leq} \eul^{\epsilon\cdot 2n}=\eul^{\frac{\gamma}{30}}\stackrel{\eqref{EqBoundExpX}}{\leq} 1+2\cdot\frac{\gamma}{30}=1+\frac{\gamma}{15}.\]
\end{proof}
\TheoReduction*
\begin{proof}
Let an instance $I:=(n,k,(\sigma_i)_{i=1}^n, (c_i)_{i=1}^n, \alpha)$ of the Pareto cover problem be given and let $\gamma\in\mathbb{Q}\cap(0,1)$. Define $\epsilon:=\frac{\gamma}{60n}$ and consider the instance $I_\gamma$ of the discrete Pareto cover problem given by $n$, $k$, $Q(\epsilon)$, $\left((p^i_l)_{l=0}^{M_\epsilon+1}\right)_{i=1}^n$ and $(c_i)_{i=1}^n$. As before, call the corresponding discrete probability distributions $(\tilde{\mu}_i)_{i=1}^n$. Denote the optimum for $I$ by $\mathrm{OPT}$ and the optimum for $I_\gamma$ by $\mathrm{OPT}_\gamma$. By Lemma~\ref{LemGoodDiscretizedSolution}, there exists an $\epsilon$-discrete cover $B$ with $\ev_{\prod_{i=1}^n\mu_i}[c_B]\leq \left(1+\frac{\gamma}{15}\right)\cdot\mathrm{OPT}$. Lemma~\ref{LemSimilarObjective}, therefore, yields
	\[\mathrm{OPT}_\gamma\leq \ev_{\prod_{i=1}^n\tilde{\mu}_i}[c_B]\leq \left(1+\frac{\gamma}{15}\right)\cdot \ev_{\prod_{i=1}^n\mu_i}[c_B]\leq \left(1+\frac{\gamma}{15}\right)^2\cdot\mathrm{OPT}.\] Let $B^*$ be a cover of cost at most  $\left(1+\frac{\gamma}{15}\right)\cdot\mathrm{OPT}_\gamma$ for $I_\gamma$. By Lemma~\ref{LemSimilarObjective}, $B^*$ is feasible for $I$. Then 
	\[\ev_{\prod_{i=1}^n\mu_i}[c_{B^*}]\stackrel{Lem.~\ref{LemSimilarObjective}}{\leq}\left(1+\frac{\gamma}{15}\right)\cdot\ev_{\prod_{i=1}^n\tilde{\mu}_i}[c_{B^*}]\leq \left(1+\frac{\gamma}{15}\right)^2\cdot\mathrm{OPT}_\gamma\leq \left(1+\frac{\gamma}{15}\right)^4\cdot\mathrm{OPT}.\]
	Finally, \[\left(1+\frac{\gamma}{15}\right)^4 = 1+4\cdot \frac{\gamma}{15}+6\cdot\left(\frac{\gamma}{15}\right)^2+4\cdot\left(\frac{\gamma}{15}\right)^3+\left(\frac{\gamma}{15}\right)^4\stackrel{\gamma\in(0,1)}{<}1+15\cdot\frac{\gamma}{15}=1+\gamma.\]
	This shows that we obtain the claimed approximation guarantee, and it remains to take care of the running time. To this end, Proposition~\ref{PropSizeMeps} tells us that $M_\epsilon\in\mathcal{O}(\log(\alpha^{-1})\cdot\epsilon^{-2})=\mathcal{O}(\log(\alpha^{-1})\cdot\gamma^{-2}\cdot n^2)$, which is polynomial in the instance size and $\gamma^{-1}$. The encoding length of $\epsilon$ is polynomial in the input size, and as each of the values in $Q(\epsilon)$ is either $0$ or $1$ or originates from $\epsilon$, $\alpha$ and $1$ by $\mathcal{O}(M_\epsilon)$ many additions and multiplications, their encoding lengths are polynomially bounded in the input size and $\gamma^{-1}$ as well. This in turn implies that all of our (polynomially many) oracle calls we need to compute the $p^i_l$ run in polynomial time. Hence, $I_\gamma$ can be derived from $I$ in time polynomial in $\gamma^{-1}$ and the encoding length of $I$.
\end{proof}
\CorReductionFPTAS*
\begin{proof}
	The fact that we obtain a $(1+\gamma)$-approximate solution for $I$ is a direct consequence of Theorem~\ref{TheoReduction}, so it remains to show that the total running time is polynomial in the encoding length of $I$ and $\gamma^{-1}$. But we have already seen that $I_\gamma$ can be compute within this time bound and (thus) has a encoding length that is polynomially bounded in the size of $I$ and $\gamma^{-1}$. As the FPTAS must have a polynomial running time in terms of the size of $I_\gamma$ and $\gamma^{-1}$, the claim follows.
\end{proof}

\section{Lemmata and proofs omitted from Section~\ref{SecRounding}}
\label{secAppendixRounding}
\begin{lemma}
	Let $\tilde{\mathcal{C}}_i:=(i,(\tilde{P}_J)_{J\subseteq [k]}, (\tilde{C}_j)_{j=1}^k)\in\mathcal{T}_i$, let $B=(b^j)_{j=1}^k\in\mathrm{AllWits}(\tilde{\mathcal{C}}_i)$ and let
	$\mathcal{C}_i:=(i,(P_J)_{J\subseteq [k]}, (C_j)_{j=1}^k):=\mathrm{Cand}^i(B)).$\\
	Then $\forall J\subseteq [k]: \tilde{P}_J\leq P_J\leq (1+\delta)^i\cdot \tilde{P}_J$ and
	$\forall j\in [k]: \tilde{C}_j\leq C_j\leq (1+\delta)^i\cdot \tilde{C}_j.$\label{LemRatios}
\end{lemma}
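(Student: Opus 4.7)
The natural approach is induction on $i$, proving the $P$-inequalities and $C$-inequalities simultaneously. In each case the argument splits into a base case handled by how the algorithm initializes candidates for $i=1$ and an inductive step that combines the recursion of Lemma~\ref{LemInductionStep} with the explicit rounding step of Algorithm~\ref{AlgDPValidCandsRounded} and the inductive hypothesis.

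For the base case $i=1$, I unfold lines~\ref{LineStartInner1}--\ref{LineEndInner1}. Given the witness $B \in \mathrm{AllWits}(\tilde{\mathcal{C}}_1)$, its first-coordinate entries are exactly the tuple $(\beta^j)_{j=1}^k$ used to build $\tilde{\mathcal{C}}_1$. Hence the pre-rounding values computed by the algorithm equal the true $P_J = \mu_1((\max_{j \notin J}\beta^j, \min_{j \in J}\beta^j] \cap [0,1])$ from Lemma~\ref{LemInductionStart} and the true $C_j = c_1 \beta^j$ from Definition~\ref{DefCandForCover}. The rounding $x \mapsto (1+\delta)^{\lfloor \log_{1+\delta} x \rfloor}$ (or $0$ if $x=0$) satisfies $\tilde{x} \le x \le (1+\delta)\tilde{x}$, which gives the claim for $i=1$.

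For the inductive step, assume the lemma holds for $i-1$. Take $\tilde{\mathcal{C}}_i \in \mathcal{T}_i$, a witness $B \in \mathrm{AllWits}(\tilde{\mathcal{C}}_i)$, and let $\tilde{\mathcal{C}}_{i-1} := (i-1,(\tilde{P}^{i-1}_L)_L,(\tilde{C}^{i-1}_j)_j) \in \mathcal{T}_{i-1}$ together with $(\beta^j)_{j=1}^k$ be the pair from lines~\ref{LineForLoopOldCands}--\ref{LineEndInner2} that produced $\tilde{\mathcal{C}}_i$ and from which $B$ was extended (tracked by $\mathrm{AllWits}$). By the way $\mathrm{AllWits}$ is maintained, $B$ restricts at coordinates $1,\dots,i-1$ to a witness of $\tilde{\mathcal{C}}_{i-1}$ and has $b^j_i = \beta^j$. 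Let $S_J := \sum_{J \subseteq L \subseteq [k]} \tilde{P}^{i-1}_L \cdot \mu_i((\alpha_L,\beta_L]\cap [0,1])$ be the pre-rounding sum computed by the algorithm and let $P^i_J$ be the corresponding true value given by Lemma~\ref{LemInductionStep}. The inductive hypothesis $\tilde{P}^{i-1}_L \le P^{i-1}_L \le (1+\delta)^{i-1} \tilde{P}^{i-1}_L$ gives, term by term,
\[
S_J \le P^i_J \le (1+\delta)^{i-1} S_J,
\]
and rounding gives $\tilde{P}^i_J \le S_J \le (1+\delta)\tilde{P}^i_J$, so that $\tilde{P}^i_J \le P^i_J \le (1+\delta)^i \tilde{P}^i_J$. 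The cost coordinates are even easier: the pre-rounding sum is $T_j := \tilde{C}^{i-1}_j + c_i \beta^j$, while $C^i_j = C^{i-1}_j + c_i \beta^j$, and the inductive hypothesis on $\tilde{C}^{i-1}_j$ immediately yields $T_j \le C^i_j \le (1+\delta)^{i-1} T_j$, which combined with $\tilde{C}^i_j \le T_j \le (1+\delta)\tilde{C}^i_j$ gives $\tilde{C}^i_j \le C^i_j \le (1+\delta)^i \tilde{C}^i_j$.

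The only subtlety is the zero case, but it is consistent: from $P^i_J \le (1+\delta)^{i-1}S_J$ (resp. $C^i_j \le (1+\delta)^{i-1}T_j$) one sees that whenever the pre-rounded sum vanishes the true value also vanishes, and conversely $S_J \le P^i_J$ (resp. $T_j \le C^i_j$) means the true value equalling zero forces the sum, hence the rounded value, to be zero as well. This makes the induction go through uniformly, and I do not expect a genuine obstacle; the only thing to watch is that the rounding loses a factor $(1+\delta)$ in the upper bound while the inductive step contributes an additional $(1+\delta)^{i-1}$ from the entrywise blow-up of $\tilde{P}^{i-1}_L$ (or $\tilde{C}^{i-1}_j$), multiplying to exactly $(1+\delta)^i$.
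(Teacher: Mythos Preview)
Your proposal is correct and follows essentially the same approach as the paper: induction on $i$, with the base case handled by the rounding in lines~\ref{LineStartInner1}--\ref{LineEndInner1} and the inductive step obtained by tracing the witness $B$ back to a predecessor $\tilde{\mathcal{C}}_{i-1}\in\mathcal{T}_{i-1}$ via the $\mathrm{AllWits}$ bookkeeping, applying the inductive hypothesis to the summands of Lemma~\ref{LemInductionStep}, and absorbing one more factor $(1+\delta)$ from the rounding. Your explicit treatment of the zero case and the observation that the factors multiply to exactly $(1+\delta)^i$ match the paper's computation with the intermediate quantities $\hat{P}^i_J$ and $\hat{C}^i_j$.
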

\begin{proof}
Induction on $i$. For $i=1$, we have, by Lemma~\ref{LemInductionStart},
	\[\tilde{P}_J=\begin{cases}
	(1+\delta)^{\lfloor\log_{1+\delta}P_J\rfloor} &, P_J >0\\
	0 &, P_J=0
	\end{cases}\]
	and \[\tilde{C}_j\gets \begin{cases}
	(1+\delta)^{\lfloor\log_{1+\delta}C_j\rfloor} &, C_j >0\\
	0 &, C_j=0
	\end{cases}\]
	Hence, $\tilde{P}_J\leq P_J\leq (1+\delta)\cdot \tilde{P}_J$ for all $J\subseteq[k]$ and
	$\tilde{C}_j\leq C_j\leq (1+\delta)\cdot \tilde{C}_j$ for all $j=1,\dots,k$ as desired.\\
	Now, let $i\geq 2$ and assume that the assertion holds for all smaller values of $i$. Pick $\tilde{\mathcal{C}}_i:=(i,(\tilde{P}^i_J)_{J\subseteq [k]}, (\tilde{C}^i_j)_{j=1}^k)\in\mathcal{T}$, $B^{i}=(b^{i,j})_{j=1}^k\in\mathrm{AllWits}(\tilde{\mathcal{C}}_i)$ and let 
	\[\mathcal{C}_i:=(i,(P^i_J)_{J\subseteq [k]}, (C^i_j)_{j=1}^k):=\mathrm{Cand}^i(B^{i}).\]
	Define $\tilde{\mathcal{C}}_{i-1}:=(i-1,(\tilde{P}^{i-1}_J)_{J\subseteq [k]}, (\tilde{C}^{i-1}_j)_{j=1}^k)\in\mathcal{T}$ to be the last candidate for which $B^i$ is added to $\mathrm{AllWits}(\tilde{\mathcal{C}}_i)$ in the respective iteration of the foreach-loop in line~\ref{LineForLoopAllWits}. The corresponding $B^{i-1}=(b^{i-1,j})_{j=1}^k\in\mathrm{AllWits}(\tilde{C}_{i-1})$ has to satisfy $b^{i-1,j}_l=
	b^{i,j}_l$ for $l\neq i$ and the corresponding vector $(\beta^j)_{j=1}^k$ must be given by $\beta^j=b^{i,j}_i$. Now, let \[\mathcal{C}_{i-1}:=(i-1,(P^{i-1}_J)_{ J\subseteq [k]}, (C^{i-1}_j)_{j=1}^k):=\mathrm{Cand}^{i-1}((B^{i-1,j})_{j=1}^k).\] Then Lemma~\ref{LemInductionStep} tells us that
	\[P^i_J=\sum_{\substack{L:\\ J\subseteq L\subseteq[k]}} P^{i-1}_L\cdot (\prod_{l=1}^n \mu_l)((\max_{j\in L\setminus J} \beta^j,\min_{j\in J} \beta^j]\cap[0,1]).\] Moreover,
	$C^i_j=C^{i-1}_j + c_i\cdot \beta^j$.\\
	Let $\hat{C}^i_j$ and $\hat{P}^i_J$ denote the values of $\tilde{C}^i_j$ and $\tilde{P}^i_J$ before the rounding to a power of $(1+\delta)$ is performed. Then 
	\[\hat{P}^i_J=\sum_{\substack{L:\\ J\subseteq L\subseteq[k]}} \tilde{P}^{i-1}_L\cdot (\prod_{l=1}^n \mu_l)((\max_{j\in L\setminus J} \beta^j,\min_{j\in J} \beta^j]\cap[0,1])\] and
	$\hat{C}^i_j=\tilde{C}^{i-1}_j + c_i\cdot \beta^j$.
	By the induction hypothesis, we obtain
	\begin{align*}\hat{P}^i_J&=\sum_{\substack{L:\\ J\subseteq L\subseteq[k]}} \tilde{P}^{i-1}_L\cdot (\prod_{l=1}^n \mu_l)((\max_{j\in L\setminus J} \beta^j,\min_{j\in J} \beta^j]\cap[0,1])\\
	&\leq \underbrace{\sum_{\substack{L:\\ J\subseteq L\subseteq[k]}} P^{i-1}_L\cdot (\prod_{l=1}^n \mu_l)((\max_{j\in L\setminus J} \beta^j,\min_{j\in J} \beta^j]\cap[0,1])}_{=P^i_J}\\
	&\leq \sum_{\substack{L:\\ J\subseteq L\subseteq[k]}} (1+\delta)^{i-1}\cdot\tilde{P}^{i-1}_L\cdot (\prod_{l=1}^n \mu_l)((\max_{j\in L\setminus J} \beta^j,\min_{j\in J} \beta^j]\cap[0,1])\\
	&\leq(1+\delta)^{i-1}\cdot \hat{P}^i_J,
	\end{align*}
	so \begin{equation}
	\hat{P}^i_J\leq P^i_J\leq (1+\delta)^{i-1}\cdot \hat{P}^i_J\label{EqPhat}.
	\end{equation}
	As \[\tilde{P}^i_J=\begin{cases} (1+\delta)^{\lfloor\log_{1+\delta}\hat{P}^i_J\rfloor} &, \hat{P}^i_J>0\\ 0 &, \hat{P}^i_J=0\end{cases},\]
	we further have
	\begin{equation}
	\tilde{P}^i_J\leq \hat{P}^i_J\leq (1+\delta)\cdot\tilde{P}^i_J\label{EqPhat2}.
	\end{equation}
	Combining \eqref{EqPhat} and \eqref{EqPhat2} yields
	\[\tilde{P}^i_J\leq P^i_J\leq (1+\delta)^i\cdot \tilde{P}^i_J\] as desired.\\
	A similar argument also applies to the cost-values: By the induction hypothesis, we get
	\begin{align*}
	\hat{C}^i_j&=\tilde{C}^{i-1}_j + c_i\cdot\beta^j\leq \underbrace{C^{i-1}_j + c_i\cdot\beta^j}_{=C^i_j}\leq(1+\delta)^{i-1}\cdot \tilde{C}^{i-1}_j + (1+\delta)^{i-1}\cdot c_i\cdot\beta^j\\
	&=(1+\delta)^{i-1}\cdot\hat{C}^i_j,
	\end{align*}
	\begin{equation}
	\text{so }\quad\hat{C}^i_j\leq C^i_j\leq (1+\delta)^{i-1}\cdot\hat{C}^i_j\label{EqChat}.
	\end{equation}
	As \[\tilde{C}^i_j=\begin{cases} (1+\delta)^{\lfloor\log_{1+\delta}\hat{C}^i_j\rfloor} &, \hat{C}^i_j>0\\ 0 &, \hat{C}^i_j=0\end{cases},\]
	\begin{equation}
	\text{we further have }\quad\tilde{C}^i_j\leq \hat{C}^i_j\leq (1+\delta)\cdot\tilde{P}^i_j\label{EqChat2}.
	\end{equation}
	As before, combining \eqref{EqChat} and \eqref{EqChat2} yields
	$\tilde{C}^i_j\leq C^i_j\leq (1+\delta)^i\cdot \tilde{C}^i_j$ as desired.
\end{proof}
\LemCloseCosts*
\begin{proof}
Let $\tilde{\mathcal{C}}=:(n,(\tilde{P}_J)_{J\subseteq [k]}, (\tilde{C}_j)_{j=1}^k)$ and $\mathcal{C}:=(n,(P_J)_{J\subseteq [k]}, (C_j)_{j=1}^k):=\mathrm{Cand}^n(B)$.
	Then Lemma~\ref{lemObjectiveWithJs} tells us that
	\[\ev_{\prod_{i=1}^n\mu_i}[c_B]=\mathrm{Cost}(\mathcal{C}).\]
	By Lemma~\ref{LemRatios}, we obtain
	\begin{align*}
	\mathrm{Cost}(\tilde{\mathcal{C}})&=\sum_{J:\emptyset\neq J\subseteq [k]}\tilde{P}_J\cdot\min_{j\in J} \tilde{C}_j\notag\\
	&\leq \underbrace{\sum_{J:\emptyset\neq J\subseteq [k]}P_J\cdot\min_{j\in J} C_j}_{=\mathrm{Cost}(\mathcal{C})} \notag\\
	&\leq \sum_{J:\emptyset\neq J\subseteq [k]}(1+\delta)^n\cdot\tilde{P}_J\cdot\min_{j\in J} (1+\delta)^n\cdot \tilde{C}_j \notag\\
	&=(1+\delta)^{2n}\cdot\mathrm{Cost}(\tilde{\mathcal{C}}).
	\end{align*}
	By our choice of $\delta=\frac{\epsilon}{4n}$, we obtain $(1+\delta)^{2n}\stackrel{\eqref{EqOneplusX}}{\leq}\eul^{2n\cdot\delta}=\eul^{\frac{\epsilon}{2}}\stackrel{\eqref{EqBoundExpX}}{\leq}1+\epsilon,$
	which concludes the proof.
\end{proof}
\begin{proposition}
	For each $\mathcal{C}\in\mathcal{T}$, we have $\mathrm{Witness}(\mathcal{C})\in\mathrm{AllWits}(\mathcal{C}).$\label{PropWitnessAmongAllWits}
\end{proposition}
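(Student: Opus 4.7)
The plan is to prove the proposition by induction on the algorithm's execution, maintaining the invariant that at every moment during the run of Algorithm~\ref{AlgDPValidCandsRounded}, every candidate $\mathcal{C}\in\mathcal{T}$ satisfies $\mathrm{Witness}(\mathcal{C})\in\mathrm{AllWits}(\mathcal{C})$. The key structural observation to exploit is that every assignment to $\mathrm{Witness}(\mathcal{C})$ in the pseudocode is followed, within the \emph{same} iteration of its innermost loop, by an insertion of that very cover into $\mathrm{AllWits}(\mathcal{C})$ through the (gray) bookkeeping lines. Since $\mathrm{AllWits}$ is only ever enlarged, never shrunk, showing this alignment at each update suffices.

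For the base case $i=1$, I would inspect the foreach-loop starting at line~\ref{LineForeachBeta1}. For each choice of $(\beta^j)_{j=1}^k$, a cover $(b^j)_{j=1}^k$ is built, assigned to $\mathrm{Witness}(\mathcal{C})$ in line~\ref{LineEndInner1}, and on the immediately following gray line added to $\mathrm{AllWits}(\mathcal{C})$. Consequently, each overwrite of $\mathrm{Witness}(\mathcal{C})$ restores the invariant within the same iteration, and any subsequent iteration preserves it because $\mathrm{AllWits}$ only grows.

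For the inductive step $i\geq 2$, assume that at the moment the algorithm finishes processing level $i-1$ the invariant holds. During the level-$i$ loops neither $\mathrm{Witness}$ nor $\mathrm{AllWits}$ is altered for level-$(i-1)$ candidates, so the invariant for those is preserved verbatim throughout level-$i$ processing. Now consider any update $\mathrm{Witness}(\mathcal{C}) \gets (b^{i,j})_{j=1}^k$ inside the inner block (lines~\ref{LineStartInner2}--\ref{LineEndInner2}), triggered by some level-$(i-1)$ candidate $\mathcal{C}'$ and a tuple $(\beta^j)_{j=1}^k$. By construction, $(b^{i,j})_{j=1}^k$ is the extension of $(b^{i-1,j})_{j=1}^k := \mathrm{Witness}(\mathcal{C}')$ obtained by replacing coordinate $i$ with $\beta^j$. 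By the induction hypothesis $(b^{i-1,j})_{j=1}^k \in \mathrm{AllWits}(\mathcal{C}')$, so the subsequent gray foreach-loop over $\mathrm{AllWits}(\mathcal{C}')$ (line~\ref{LineForLoopAllWits}) visits $(b^{i-1,j})_{j=1}^k$ and inserts its extension---precisely $(b^{i,j})_{j=1}^k$---into $\mathrm{AllWits}(\mathcal{C})$. Hence the invariant is restored immediately after each such overwrite.

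The proof is essentially bookkeeping; the only point requiring care is the exact ordering of operations, namely that the (imaginary) gray update for $\mathrm{AllWits}(\mathcal{C})$ is executed \emph{inside} the same iteration as the (real) update for $\mathrm{Witness}(\mathcal{C})$, so that no overwrite ever leaves $\mathrm{AllWits}$ ``behind.'' Verifying this alignment is a direct inspection of the pseudocode, after which the induction closes and the statement of the proposition follows.
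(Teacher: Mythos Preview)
Your proof is correct and follows essentially the same approach as the paper: induction on the level $i$, using that the level-$(i-1)$ witness lies in $\mathrm{AllWits}$ of the corresponding level-$(i-1)$ candidate, so that the gray loop over $\mathrm{AllWits}(\mathcal{C}')$ picks it up and inserts its extension into $\mathrm{AllWits}(\mathcal{C})$. The only cosmetic difference is that you phrase it as maintaining an invariant after every overwrite, whereas the paper argues directly about the \emph{last} overwrite of $\mathrm{Witness}(\mathcal{C}_i)$; both are equivalent here since level-$(i-1)$ data is frozen once level-$i$ processing begins.
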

\begin{proof}
Induction on $i$ such that $\mathcal{C}\in\mathcal{T}_i$. For $i=1$, the assertion is clear. Now, let $i\geq 2$ and pick $\mathcal{C}_i\in\mathcal{T}.$ Consider the last iteration of the foreach-loop in line~\ref{LineForLoopOldCands} where $\mathrm{Witness}(\mathcal{C}_i)$ is updated and let $B^i=(b^{i,j})_{j=1}^k$ be its final value. Let $\mathcal{C}_{i-1}\in\mathcal{T}$ be the candidate corresponding to this iteration of the foreach-loop, let $B^{i-1}=(b^{i-1,j})_{j=1}^{k}$ be the final witness of $\mathcal{C}_{i-1}$ and let $(\beta^j)_{j=1}^k$ be the vector for which $B^i$ arises from $B^{i-1}$, that is $\beta^j=b^{i,j}_i$ and $b^{i-1,j}_l=b^{i,j}_l$ for $l\neq i$. By the induction hypothesis, we have $B^{i-1}\in\mathrm{AllWits}(\mathcal{C}_{i-1})$, meaning that $B^i$ is added to $\mathrm{AllWits}(\mathcal{C}_i)$ in the iteration of the foreach-loop in line~\ref{LineForLoopAllWits}. This concludes the proof.
\end{proof}
\PropAllSolutionsOccur*
\begin{proof}
Induction on $i$. For $i=1$, this is clear. Let $i\geq 2$ and consider $B^i=(b^{i,j})_{j=1}^k$ such that $b^k=a^*$ and $b^j_l=0$ for $j=1\dots,k-1$ and $l=i+1,\dots,n$. Define $B^{i-1}=(b^{i-1,j})_{j=1}^k$ by $b^{i-1,k}=a^*$ and \[b^{i-1,j}_l:=\begin{cases} b^{i,j}_l &, l\leq i-1, j\leq k-1\\ 0 &, l\geq i, j\leq k-1\end{cases}.\] Then the induction hypothesis is applicable to $B^{i-1}$ and we know that there is $\mathcal{C}_{i-1}\in\mathcal{T}_{i-1}$ with $(B^{i-1,j})_{j=1}^k\in\mathrm{AllWits}(\mathcal{C}_{i-1})$. Let $(\beta^j)_{j=1}^k$ be given by $\beta^j=b^{i,j}_i$ and consider the iteration of the foreach-loop in line~\ref{LineForLoopAllWits} where $\mathcal{C}_{i-1}$ and $(\beta^j)_{j=1}^k$ are chosen in the outer loops and $B^{i-1}\in\mathrm{AllWits}(\mathcal{C}_{i-1})$ is considered. Then for the respective candidate $\mathcal{C}_i$ we are currently considering, $B^i$ is added to $\mathrm{AllWits}(\mathcal{C}_i)$.
\end{proof}
The remainder of this section deals with the analysis of the running time of Algorithm~\ref{AlgDPValidCandsRounded}. In doing so, Proposition~\ref{PropPossibleCostValues} provides a polynomial bound on the number of possible cost values occurring within a rounded candidate, while Proposition~\ref{PropPossiblePropValues} bounds the number of different probabilities we may encounter. Lemma~\ref{LemTPolynomial} combines these two results into a polynomial bound on the size of $\mathcal{T}$.
\begin{proposition}
	Let an instance of the discrete Pareto cover problem be given and let $(m,(P_J)_{J \subseteq [k]}, (C_j)_{j=1}^k)\in \mathcal{T}$. Then for all $j\in[k]$, we either have $C_j=0$, or $C_j=(1+\delta)^l$ with $l\in [\log_{1+ \delta}(a_1)+\min_{i=1}^n \log_{1+\delta} (c_i)-n, \log_{1+\delta}(\sum_{i=1}^n c_i)]\cap \mathbb{Z}$.\label{PropPossibleCostValues}
\end{proposition}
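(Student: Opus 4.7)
The plan is to prove the two sides of the interval separately, after a short induction confirming that every nonzero $C_j$ stored in $\mathcal{T}$ has the form $(1+\delta)^\ell$ for some $\ell \in \mathbb{Z}$; this is immediate from the explicit rounding steps of Algorithm~\ref{AlgDPValidCandsRounded}.

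For the upper bound, I would trace the algorithm's update $C^i_j \gets C^{i-1}_j + c_i \cdot \beta^j$ (subsequently rounded down), and show by induction on $i$ that $C^i_j \leq \sum_{l=1}^i c_l \cdot b^{i,j}_l$, where $b^{i,j}$ is any witness associated with the candidate (say, the one produced through Proposition~\ref{PropWitnessAmongAllWits}). Rounding down can only decrease, so $C^i_j \leq C^{i-1}_j + c_i \cdot \beta^j$, and the induction hypothesis closes the loop since the chosen $\beta^j$ coincides with $b^{i,j}_i$. Because every $b^{i,j}_l \in A \subseteq [0,1]$, we obtain $C_j \leq \sum_{l=1}^n c_l$, hence $\ell \leq \log_{1+\delta}(\sum_{l=1}^n c_l)$.

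For the lower bound, suppose $C_j > 0$ and let $i^*$ be the smallest iteration index with $C^{i^*}_j > 0$. At iteration $i^*$, the pre-rounding value $C^{i^*-1}_j + c_{i^*} \cdot \beta^j$ equals $c_{i^*} \cdot \beta^j$ because $C^{i^*-1}_j = 0$, so necessarily $c_{i^*} > 0$ and $\beta^j \in A \setminus \{0\}$, i.e.\ $\beta^j \geq a_1$. Since rounding down divides by at most $1+\delta$, this gives $C^{i^*}_j \geq c_{i^*} \cdot a_1 / (1+\delta) \geq (\min_i c_i) \cdot a_1 / (1+\delta)$. For each subsequent iteration $i > i^*$, the pre-rounding value $C^{i-1}_j + c_i \cdot \beta^j$ is at least $C^{i-1}_j$ (all added terms are nonnegative), so after rounding $C^i_j \geq C^{i-1}_j / (1+\delta)$. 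Iterating across the at most $n$ remaining steps yields $C_j \geq (\min_i c_i) \cdot a_1 / (1+\delta)^n$, and taking $\log_{1+\delta}$ produces exactly the claimed lower bound.

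The argument is essentially careful bookkeeping; no single step is deep. The main subtlety lies in tracking how much the compounded rounding can erode $C_j$. The crucial observation is that the underlying pre-rounding sum is monotone nondecreasing in $i$ because only nonnegative terms are added, so the total multiplicative loss across $n$ steps is bounded by $(1+\delta)^n$, which is precisely what the additive $-n$ in the exponent of the lower bound captures.
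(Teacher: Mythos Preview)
Your argument is correct and amounts to the same idea the paper uses: bound the true (unrounded) partial cost of a witness between $a_1\cdot\min_i c_i$ and $\sum_i c_i$, and note that the accumulated rounding over at most $n$ steps can distort this by at most a factor $(1+\delta)^n$. The paper's version is slightly shorter because it invokes Lemma~\ref{LemRatios} --- which already packages the two-sided bound $C_j\le \hat C_j\le (1+\delta)^n C_j$ for the witness cost $\hat C_j=\sum_l c_l b^j_l$ --- rather than re-tracing the rounding step by step as you do.
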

\begin{proof}
	By Lemma~\ref{LemRatios}, there is a valid candidate with witness $B=(b^j)_{j=1}^k$ and costs $(\hat{C}_j)_{j=1}^k$ such that $C_j\leq \hat{C}_j\leq (1+\delta)^{m}\cdot C_j\leq (1+\delta)^n\cdot C_j$ for all $j$. If $\hat{C}_j=0$, this implies $C_j=0$. Otherwise, we know that $0<\hat{C}_j=\sum_{i=1}^n c_i\cdot b^j_i$, meaning that there is $i\in[n]$ such that $b^j_i>0$ and, hence, $b^j_i\geq a_1$.\\ By non-negativity of costs, this implies $a_1\cdot\min_{i=1}^n c_i\leq \hat{C}_j\leq \sum_{i=1}^n c_i\cdot 1,$ and, hence, $(1+\delta)^{-n}\cdot a_1\cdot\min_{i=1}^n c_i\leq C_j\leq \sum_{i=1}^n c_i.$ As $C_j$ has to be an integer power of $(1+\delta)$, applying logarithms yields the claim.
\end{proof}
\begin{proposition}
	Let an instance of the discrete Pareto cover problem be given and let $(m,(P_J)_{J \subseteq [k]}, (C_j)_{j=1}^k)\in \mathcal{T}$. Then for all $J\subseteq[k]$, we either have $P_J=0$, or $P_J=(1+\delta)^l$ with $l\in [n\cdot\min_{i=1,\dots,n, l=0,\dots,M+1, p^i_l > 0}\log_{1+\delta}(p^i_l) -n, 0]\cap \mathbb{Z}$.\label{PropPossiblePropValues}
\end{proposition}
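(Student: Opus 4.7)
The plan is to mirror the argument of Proposition~\ref{PropPossibleCostValues}, transferring the desired bounds from the stored (rounded) probability entries of the candidate to the genuine probabilities realised by any of its witnesses. Specifically, I would invoke Proposition~\ref{PropWitnessAmongAllWits} to produce a witness $B = (b^j)_{j=1}^k$ of the candidate $(m,(P_J)_{J\subseteq[k]},(C_j)_{j=1}^k) \in \mathcal{T}_m$, let $(\hat{P}_J)_{J\subseteq[k]}$ denote the probability vector of $\mathrm{Cand}^m(B)$, and apply Lemma~\ref{LemRatios} to sandwich $P_J \le \hat{P}_J \le (1+\delta)^m \cdot P_J$ for every $J \subseteq [k]$. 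The upper bound on the exponent is then immediate: $\hat{P}_J$ is a probability, hence $\hat{P}_J \le 1$, which forces $P_J \le 1 = (1+\delta)^0$ and therefore $l \le 0$ whenever $P_J = (1+\delta)^l > 0$.

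For the lower bound, I would assume $P_J > 0$, so that $\hat{P}_J \ge P_J > 0$. The crucial observation is that $\hat{P}_J = \mu(\{x : J^m(x) = J\})$ factorises as $\sum_{(x_1,\dots,x_m):\, J^m(x)=J} \prod_{i=1}^m \mu_i(\{x_i\})$, since $J^m(x)$ depends only on the first $m$ coordinates and the remaining coordinates sum out to $1$ under the product measure. Positivity of $\hat{P}_J$ then forces at least one such product to be strictly positive, and each factor in that product is of the form $p^i_{\ell_i} > 0$, hence at least $p_{\min} := \min\{p^i_\ell : p^i_\ell > 0\}$. Using $p_{\min} \in (0,1]$ and $m \le n$, this gives $\hat{P}_J \ge p_{\min}^m \ge p_{\min}^n$, and combining with Lemma~\ref{LemRatios} yields $P_J \ge p_{\min}^n / (1+\delta)^n$. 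Taking $\log_{1+\delta}$ produces the claimed lower bound $l \ge n \cdot \min_{i,\ell:\,p^i_\ell > 0} \log_{1+\delta}(p^i_\ell) - n$.

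The main obstacle is really only the factorisation step that delivers $\hat{P}_J \ge p_{\min}^n$; once this coordinate-wise decomposition of the point mass is in hand, the rest of the argument is a direct transcription of the cost case treated in Proposition~\ref{PropPossibleCostValues}.
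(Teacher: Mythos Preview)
Your proposal is correct and follows essentially the same route as the paper's proof: invoke Lemma~\ref{LemRatios} on a witness of the candidate to sandwich $P_J$ between $\hat P_J/(1+\delta)^n$ and $\hat P_J$, use $\hat P_J\le 1$ for the upper bound, and for the lower bound expand $\hat P_J$ as a sum of products of the $p^i_\ell$ so that positivity forces one summand to be at least $p_{\min}^n$. The only cosmetic difference is that you marginalise out coordinates $m+1,\dots,n$ before bounding below by a single summand, whereas the paper keeps all $n$ coordinates in the sum; both lead to the same inequality $P_J\ge (1+\delta)^{-n}p_{\min}^n$.
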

\begin{proof}
	By Lemma~\ref{LemRatios}, there is a valid candidate with witness $B=(b_j)_{j=1}^k$ and probabilities $(\hat{P}_J)_{J\subseteq[k]}$ such that $P_J\leq \hat{P}_J\leq (1+\delta)^{n}\cdot P_J$ for all $j$. If $\hat{P}_J=0$, this implies $P_J=0$. Otherwise, we know that\[0<\hat{P}_J=(\prod_{i=1}^n\mu_i)(\{x\in [0,1]^n: J^m(x)=J\})=\sum_{\substack{t\in \{0,\dots,M+1\}^n:\\J^m((a_{t_i})_{i=1}^n)=J}}\prod_{i=1}^n p^i_{t_i}.\] Hence, there has to be $t\in \{0,\dots,M+1\}^n$ such that $p^i_{t_i}>0$ for $i=1,\dots,n$, and  \[(\min\{p^i_l: i=1,\dots,n, l=0,\dots,M+1, p^i_l > 0\})^n\leq\prod_{i=1}^n p^i_{t_i}\leq\hat{P}_J\leq 1,\] where the last inequality follows from the fact that $(\prod_{i=1}^n\mu_i)$ is a probability measure. Consequently, \[(1+\delta)^{-n}\cdot (\min\{p^i_l: i=1,\dots,n, l=0,\dots,M+1, p^i_l > 0\})^n\leq P_J\leq 1.\] As $P^J$ is an integer power of $(1+\delta)$, applying logarithms yields the claim.
\end{proof}
\LemTPolynomial*
\begin{proof}
Concerning the bound on $\mathcal{T}$, observe that the first factor is the number of possible values for $(P_J)_{J\subseteq [k]}$, the second factor is the number of possible values for $(C_j)_{j=1}^k$ and the last factor is the number of possible values for $i$.
	
	 To show that the upper bound on $\mathcal{T}$ is polynomial, note that for constant $k$, $k$ and $2^k$ are constant, so it suffices to verify that each of the terms $n$,\[-n\cdot\min_{\substack{i=1,\dots,n,\\ l=0,\dots,M+1,\\ p^i_l > 0}}\log_{1+\delta}(p^i_l)+n+2\] and $n+2+\log_{1+\delta}(\sum_{i=1}^n c_i)-\log_{1+ \delta}(a_1)-\min_{i=1}^n \log_{1+\delta} (c_i))$ is polynomial in the encoding length of the input instance. Given that for $x\in\mathbb{R}_{>0}$, we have $\log_{1+ \delta}(x)=\frac{\log_2(x)}{\log_2(1+\delta)}$, it remains to check that $\frac{1}{\log(1+\delta)}$ is polynomial in the input size and $\epsilon^{-1}$. Indeed, \[\log_{2}(1+ \delta)=\frac{\ln(1+\delta)}{\ln(2)}\geq \ln(1+\delta)=\ln(1)+\int_{1}^{1+\delta}\frac{1}{x}dx\geq 0+\int_{1}^{1+\delta}\frac{1}{2}dx=\frac{\delta}{2}=\frac{\epsilon}{8n},\]
	so $\frac{1}{\log(1+\delta)}\leq 8\cdot n\cdot \epsilon^{-1}$, which concludes the proof.
\end{proof}
\TheoPolyTime*
\begin{proof} 
By Lemma~\ref{LemTPolynomial}, there is a polynomial number of iterations of the foreach-loop in line~\ref{LineForLoopOldCands}. Moreover, each time, we perform at most $(M+2)^k$ iterations of the loops in line~\ref{LineForeachBeta1} and line~\ref{LineForBeta2}, which is polynomial in the input size as well.
	The candidate updates in lines~\ref{LineStartInner1}-\ref{LineEndInner1} and lines~\ref{LineStartInner2}-\ref{LineEndInner2} run in time $\mathcal{O}(4^k)$, assuming that the computation of logarithms and exponentiation can be done in constant time. If not, by the bounds we have obtained on the range of possible values, we could alternatively compute a polynomial-sized lookup-table in polynomial time. Finally, all terms of the form $\mu_i((a,b])$ that occur during the algorithm can be evaluated in linear time by simply summing up the probabilities $p^i_l$ for all $l$ such that $a<a_l\leq b$.
\end{proof}
\TheoApproxGuarantee*
\begin{proof}
Pick an optimum solution $B^*=(b^{*,j})_{j=1}^k$ with $b^{*,k}=a^*$. Denote the solution we return by $B=(b^j)_{j=1}^k$. Let further $\tilde{\mathcal{C}}:=(n,(\tilde{P}_J)_{ J\subseteq[k]},(\tilde{C}_j)_{j=1}^k)$ be the candidate of minimum cost we choose, let $\mathcal{C}:=(n,P_J)_{J\subseteq[k]},(C_j)_{j=1}^k):=\mathrm{Cand}^n(B)$, and let \[\hat{\mathcal{C}}:=(n,(\hat{P}_J)_{J\subseteq[k]},(\hat{C}_j)_{j=1}^k)\in\mathcal{T}_n\] such that $B^*\in\mathrm{AllWits}(\hat{\mathcal{C}})$, which exists by Proposition~\ref{PropAllSolutionsOccur}.
	By Lemma~\ref{lemObjectiveWithJs}, we know that the objective value attained by $B$ equals \begin{equation}
	\ev_{\prod_{i=1}^n\mu_i}[c_B]=\mathrm{Cost}(\mathcal{C})\label{EqCostAlg}.
	\end{equation}
	By Lemma~\ref{LemCloseCosts} and Proposition~\ref{PropWitnessAmongAllWits}, we further obtain
	\begin{equation}
	\mathrm{Cost}(\mathcal{C})\leq (1+\epsilon)\cdot \mathrm{Cost}(\tilde{\mathcal{C}}), \label{EqCostAlg2}
	\end{equation}
	as well as
	\begin{equation}
	\mathrm{Cost}(\hat{\mathcal{C}})\leq \ev_{\prod_{i=1}^n\mu_i}[c_{B^*}].\label{EqCostOpt2}
	\end{equation}
	Finally, our choice of $\tilde{\mathcal{C}}$ implies that
	\begin{equation}
	\mathrm{Cost}(\tilde{\mathcal{C}})\leq \mathrm{Cost}(\hat{\mathcal{C}})\label{EqMinCost}.
	\end{equation}
	All in all, this yields
	\begin{align*}\mathrm{Cost}(B)&\stackrel{\eqref{EqCostAlg}}{=}\mathrm{Cost}(\mathcal{C})\stackrel{\eqref{EqCostAlg2}}{\leq} (1+\epsilon)\cdot\mathrm{Cost}(\tilde{\mathcal{C}})\stackrel{\eqref{EqMinCost}}{\leq}(1+\epsilon)\cdot\mathrm{Cost}(\hat{\mathcal{C}})\\&\stackrel{\eqref{EqCostOpt2}}{\leq}(1+\epsilon)\cdot\ev_{\prod_{i=1}^n\mu_i}[c_{B^*}]=(1+\epsilon)\cdot\mathrm{OPT}. \end{align*}
\end{proof}

\end{document}